\title[Component structure: 
  barely supercritical case]
{Component structure of the configuration model:
  barely supercritical case}
\author{Remco van der Hofstad}
\address{Department of Mathematics and
        Computer Science, Eindhoven University of Technology,
        5600 MB Eindhoven, The Netherlands.}
\email{rhofstad@win.tue.nl}
\urladdr{http://www.win.tue.nl/$\sim$rhofstad}
\author{Svante Janson}
\address{Department of Mathematics, Uppsala University,
PO Box 480, SE-751 06 Uppsala, Sweden.}
\email{svante.janson@math.uu.se}
\urladdr{http://www.math.uu.se/$\sim$svante}
\author{Malwina Luczak}
\address{School of Mathematical Sciences, Queen Mary University of London,
  Mile End Road, London E1 4NS, United Kingdom}
\email{m.luczak@qmul.ac.uk}
\urladdr{http://www.maths.qmul.ac.uk/~luczak}
\keywords{random graphs, percolation, phase transition, scaling window}
\subjclass[2010]{05C80, 60C05}
\date{4 November, 2016} 
\theoremstyle{plain}
\newtheorem{theorem}{Theorem}[section]
\newtheorem{lemma}[theorem]{Lemma}
\theoremstyle{definition}
\newtheorem{remark}[theorem]{Remark}
\newtheorem{example}[theorem]{Example}
\numberwithin{equation}{section}
\numberwithin{theorem}{section}
\newcommand{\Dn}{D_n}
\newcommand{\e}{{\mathrm e}}
\newcommand {\convd}{\dto}
\def\1{{\mathchoice {1\mskip-4mu\mathrm l}      
{1\mskip-4mu\mathrm l}
{1\mskip-4.5mu\mathrm l} {1\mskip-5mu\mathrm l}}}
\newcommand{\indic}[1]{\1_{\{#1\}}}
\newcommand{\indicwo}[1]{\1_{#1}}
\newcommand{\expec}{{\mathbb{E}}}
\newcommand{\prob}{{\mathbb{P}}}
\newcommand\whp{w.h.p\punkt}
\newcommand\whpx{w.h.p}
\newcommand\punkt{.\spacefactor=1000}    
\newcommand\ie{i.e\punkt}
\newcommand\eg{e.g\punkt}
\newcommand\cf{cf\punkt}
\newcommand{\as}{a.s\punkt}
\newcommand{\sss}   { \scriptscriptstyle }
\newcommand{\gam}{\gamma}
\newcommand {\vep}{\varepsilon}
\def\eqalign#1\enalign{
    \begin{align}#1\end{align}
    }
\newcommand{\lbeq}[1]  {\label{e:#1}}
\newcommand{\refeq}[1] {\eqref{e:#1}}
\newcommand{\eq}{\begin{equation}}
\newcommand{\en}{\end{equation}}
\newcommand{\ben}{\begin{enumerate}}
\newcommand{\een}{\end{enumerate}}
\newcommand{\eqn}[1]{\begin{equation} #1 \end{equation}}
\newcommand{\eqan}[1]{\begin{align} #1 \end{align}}
\newcommand{\nn}{\nonumber}
\renewcommand{\to}{\rightarrow}
\newenvironment{romenumerate}[1][-10pt]{
\addtolength{\leftmargini}{#1}\begin{enumerate}
 \renewcommand{\labelenumi}{\textup{(\roman{enumi})}}%
 \renewcommand{\theenumi}{\textup{(\roman{enumi})}}%
 }{\end{enumerate}}
\newenvironment{Aenumerate}[1][-10pt]{
\addtolength{\leftmargini}{#1}\begin{enumerate}
\renewcommand{\theenumi}{{\upshape{(A\arabic{enumi})}}}
\renewcommand{\labelenumi}{\theenumi}
 }{\end{enumerate}}
\newenvironment{Benumerate}[1][-10pt]{
\addtolength{\leftmargini}{#1}\begin{enumerate}
\renewcommand{\theenumi}{{\upshape{(B\arabic{enumi})}}}
\renewcommand{\labelenumi}{\theenumi}
 }{\end{enumerate}}
\newcommand{\refT}[1]{Theorem~\ref{#1}}
\newcommand{\refL}[1]{Lemma~\ref{#1}}
\newcommand{\refR}[1]{Remark~\ref{#1}}
\newcommand{\refS}[1]{Section~\ref{#1}}
\newcommand{\refSS}[1]{Section~\ref{#1}}
\newcommand{\refE}[1]{Example~\ref{#1}}
\renewcommand\P{\operatorname{\mathbb P{}}}
\newcommand\E{\operatorname{\mathbb E{}}}
\newcommand\Var{\operatorname{Var}}
\newcommand\Po{\operatorname{Po}}
\newcommand\Bin{\operatorname{Bin}}
\newcommand\Be{\operatorname{Be}}
\newcommand\gnd{\ensuremath{G(n,(d_i)_{i\in[n]})}}
\newcounter{CC}
\newcommand{\CC}{\stepcounter{CC}\CCx} 
\newcommand{\CCx}{C_{\arabic{CC}}}     
\newcommand{\CCdef}[1]{\xdef#1{\CCx}}     
\newcounter{cc}
\newcommand\ga{\alpha}
\newcommand\gb{\beta}
\newcommand\gd{\delta}
\newcommand\gD{\Delta}
\newcommand\gf{\varphi}
\newcommand\kk{\kappa}
\newcommand\gl{\mu}
\newcommand\go{\omega}
\newcommand\gs{\sigma}
\newcommand\gss{\sigma^2}
\newcommand\eps{\varepsilon}
\newcommand\cB{\mathcal B}
\newcommand\cC{\mathcal C}
\newcommand\cE{\mathcal E}
\newcommand\cF{\mathcal F}
\newcommand\tA{\widetilde A}
\newcommand\tS{\widetilde S}
\newcommand\tV{\widetilde V}
\newcommand\set[1]{\ensuremath{\{#1\}}}
\newcommand\bigset[1]{\ensuremath{\bigl\{#1\bigr\}}}
\newcommand\Bigset[1]{\ensuremath{\Bigl\{#1\Bigr\}}}
\newcommand\xpar[1]{(#1)}
\newcommand\bigpar[1]{\bigl(#1\bigr)}
\newcommand\Bigpar[1]{\Bigl(#1\Bigr)}
\newcommand\biggpar[1]{\biggl(#1\biggr)}
\newcommand\lrpar[1]{\left(#1\right)}
\newcommand\bigsqpar[1]{\bigl[#1\bigr]}
\newcommand\Bigsqpar[1]{\Bigl[#1\Bigr]}
\newcommand\bigabs[1]{\bigl|#1\bigr|}
\newcommand\Bigabs[1]{\Bigl|#1\Bigr|}
\newcommand\lrabs[1]{\left|#1\right|}
\newcommand{\tend}{\longrightarrow}
\newcommand\dto{\overset{\mathrm{d}}{\tend}}
\newcommand\pto{\overset{\mathrm{p}}{\tend}}
\newcommand\asto{\overset{\mathrm{a.s.}}{\tend}}
\renewcommand\op{o_{\mathrm p}}
\newcommand\Op{O_{\mathrm p}}
\newcommand\bbR{\mathbb R}
\newcommand\ee[1]{{\mathrm e}^{#1}}
\newcommand\eet{{\mathrm e}^{-t}}
\newcommand{\dmax}{\Delta}
\newcommand\tC{{\widetilde \cC}}
\newcommand\pC{{\cC'}}
\newcommand\gan{\ga_n}
\newcommand\eee{\cE_\vep}
\newcommand\qq{^{1/2}}
\newcommand\qqw{^{-1/2}}
\newcommand\qqq{^{1/3}}
\newcommand\qqqb{^{2/3}}
\newcommand\qqqw{^{-1/3}}
\newcommand\qw{^{-1}}
\newcommand\qww{^{-2}}
\newcommand\rhs{right-hand side}
\newcommand{\hg}{\hat g}
\newcommand{\hh}{\hat h}
\newcommand{\gant}{\ga_nt}
\newcommand{\gamn}{\gam_n}
\newcommand{\JL}[1]{\cite[#1]{JanLuc07}}
\newcommand{\sC}[1]{\textup{\textsf{C#1}}}
\newcommand{\taux}{\tau}
\newcommand{\hht}{\hh(\taux)}
\renewcommand{\=}{:=}
\newcommand\pfitemx[1]{\par#1:}
\newcommand\pfitemref[1]{\pfitemx{\ref{#1}}}
\newcommand{\ntoo}{{\ensuremath{n\to\infty}}}
\newcommand\xfrac[2]{#1/#2}
\newcommand\dd{\,\mathrm{d}}
\newcommand\floor[1]{\lfloor#1\rfloor}
\newcommand{\phii}{\Psi}
\newcommand{\xD}{D^*}
\newcommand{\tD}{\widetilde D}
\newcommand{\nk}{_{n,k}}
\newcommand{\Tx}{T'}
\newcommand{\ddn}{(d_i)_{i\in[n]}}
\newcommand{\hddn}{(\hd_i)_{i\in[n]}}
\newcommand{\sumin}{\sum_{i\in[n]}}
\newcommand{\sumko}{\sum_{k=0}^\infty}
\newcommand{\vx}[1]{v(#1)}
\newcommand{\vxk}[1]{v_k(#1)}
\newcommand{\ex}[1]{e(#1)}
\newcommand{\refAA}{\ref{AA}--\ref{AO}}
\newcommand{\refAApos}{\ref{AA}--\ref{AO} and $\eps_n>0$}
\newcommand{\ceb}{\cE(b)}
\newcommand{\cec}{\cE_1(C)}
\newcommand{\cebc}{\cE(b,C)}
\newcommand{\gsLn}{\gs_{L,n}}
\newcommand{\LL}{\bar L}
\newcommand{\KK}{B}
\newcommand{\VV}{V'}
\newcommand{\VVn}{\VV_n}
\newcommand{\ER}{Erd\H{o}s--R\'enyi}
\newcommand{\hd}{\hat d}
\newcommand{\hD}{\hat D}
\newcommand{\hR}{\hat R}
\newcommand{\ea}{\cE_a}
\newcommand{\BT}{\cB_{T'}}
\newcommand{\es}{\cE_{\textsf s}}
\newcommand{\cs}{c_{\textsf s}}
\newcommand{\tnu}{\tilde\nu}
\newcommand{\ettop}{\bigpar{1+\op(1)}}
\newcommand\arxiv[1]{\texttt{arXiv:#1}}
\newcommand\arXiv{\arxiv}
\begin{document}

\begin{abstract}
We study near-critical behavior in the configuration model.
Let $\Dn$ be the degree of a random vertex.
We let $\nu_n=\expec[\Dn(\Dn-1)]/\expec[\Dn]$ and, assuming that
$\nu_n \to 1$ as $n \to \infty$, we write $\vep_n=\nu_n-1$.
We call the setting where
$\vep_n n^{1/3}/(\expec[\Dn^3])^{2/3} \to \infty$
the {\it barely supercritical} regime. We further assume that the variance
of $D_n$ is uniformly bounded as $n \to \infty$.

Let $\Dn^*$ denote the size-biased version of $\Dn$.
We prove that there is
a unique giant component of size $n \rho_n \E D_n (1+o(1))$, where
$\rho_n$ denotes the survival probability of a branching process with offspring distribution
$\Dn^*-1$. This extends earlier results of Janson and Luczak~\cite{JanLuc07}, as well as those of Janson, Luczak, Windridge and House~\cite{SJ300} to the case where the third moment of $D_n$ is unbounded, filling the gap in the literature.

We further study the size of the largest component in the \emph{critical} regime, where $\vep_n = O(n^{-1/3} (\E D_n^3)^{2/3})$, extending and complementing results of Hatami and Molloy~\cite{HatamiMolloy}.
\end{abstract}

\maketitle

\section{Introduction}
\label{sec-intro}

In recent years, the critical and near-critical behaviour of random graphs
has received considerable attention. Here we study random graphs with given
vertex degrees. (See \refS{sec-model} for precise definitions and assumptions.)
In a random graph with given degrees on $n$ vertices, we let $D_n$ denote the degree of
a random vertex; we consider asymptotics as \ntoo.
The fundamental theorem by \citet{MolRee95}
(see also \cite{MolRee98, KangSeierstad,JanLuc07, BolRio15,ReedEtal2016},
and \refS{S2} below)
says that, under suitable
technical assumptions,
there exists \whp{} (meaning `with high probability', \ie, with probability tending to
1 as \ntoo)
a giant component of size $\Theta(n)$ if and only if
$\lim_\ntoo\E D_n(D_n-2)>0$.

The purpose of the present paper is to study \emph{near-critical}
behaviour in greater detail; we assume $\E D_n(D_n-2)\to0$ so
we know that the order $\vx{\cC_1}$
of the largest component is  $\op(n)$, and we
want to find more precise asymptotics of $\vx{\cC_1}$.

\citet{HatamiMolloy} identified the \emph{critical window}; they showed that
(under weak technical conditions)
if $\E D_n(D_n-2)=O\bigpar{n\qqqw(\E D_n^3)\qqqb}$,
then $\vx{\cC_1}$ is of the order $n\qqqb(\E D_n^3)\qqqw$, while
$\vx{\cC_1}$ is larger
if $\E D_n(D_n-2)\gg{n\qqqw(\E D_n^3)\qqqb}$,
and smaller if
$\E D_n(D_n-2)<0$ with
$|\E D_n(D_n-2)|\gg{n\qqqw(\E D_n^3)\qqqb}$. (See also Remark \ref{rem-crit}
for related work identifying
the scaling limits of clusters in the critical window.)
This parallels the well-known critical behaviour of the random graph $G(n,p)$
with $p=(1+\eps_n)/n$, or $G(n,M)$ with $M=(1+\eps_n)n/2$,
where it was shown by \citet{Boll84} and \citet{Luczak90}
that the critical window is characterized by $\eps_n=O(n\qqqw)$;
see also \cite{Bollobas} and \cite{JLR}.

Here we are mainly concerned with the \emph{barely supercritical} regime, where
$\E D_n(D_n-2)\to0$, with 
$\E D_n(D_n-2)>0$ and outside the critical window just defined.
We find (under weak technical conditions)
precise asymptotics of $\vx{\cC_1}$, up to a factor $1+\op(1)$, in this
regime.
In the case when the degree distribution $D_n$ has a bounded $(4+\eps)$-th
moment,
these asymptotics were found by \citet{JanLuc07};
this result was extended to the case when the third power $D_n^3$ is uniformly
integrable by \citet{SJ300}. In this paper, we only assume that
the second moment $\E D_n^2$ exists and is uniformly bounded.
Our study reveals that there is a kind of phase
transition. Roughly speaking,
as long as the asymptotic degree distribution has a finite third moment
(to be precise, as long as $D_n^3$  is uniformly integrable, the case studied in
\cite{JanLuc07} and \cite{SJ300}), the size of the largest component is
proportional to $n\E(D_n(D_n-2))$. However, when the degree distribution has
heavier tails, then
the largest component is smaller; typically (but not always)
of the order $n\E(D_n(D_n-2))/\E D_n^3$.
Precise results are given in
Theorems \ref{T1}--\ref{TD3infty} below, where \refT{TD3} corresponds to the important example when the third moment of the degree distribution converges. Also,
\refE{exam-power-law} discusses power-law degree sequences with possibly unbounded third moment of the degree distribution.
(The same difference between the cases $\E D_n^3=O(1)$ and $\E D_n^3\to\infty$
is also evident in the result on the critical window
by \citet{HatamiMolloy} cited above.)

As said above,
our results (\refT{T1} in particular) show that in the barely supercritical
phase,
the size of the largest component is concentrated within a factor $1+\op(1)$,
\ie, normalized by dividing by a suitable constant, the size converges in
probability to 1. As a complement, we also show (\refT{TC})
that this is \emph{not} true
in the critical window identified by \citet{HatamiMolloy}, and further investigated in
\cite{DhaHofLeeSen16a, DhaHofLeeSen16b, Jose10,Rior12}.
Inside the critical window, the size after normalization will converge in
distribution, at least along subsequences, but the limit will not be
constant; in fact any such limit will be unbounded.
Again, this is precisely as in the well-known case of $G(n,p)$,
see \cite{LPW1994,Aldo97},
so this provides another reason to regard the window defined above as the
critical window, at least on the supercritical side. (We conjecture that the
size of the largest component is concentrated also in the subcritical case,
but, as far as we know, this has not yet been proved.)

It is well known that the process of exploration of the component containing
a given vertex can be approximated by a Galton--Watson branching process;
this gives, for example, a heuristic argument for the condition
$\lim_\ntoo\E D_n(D_n-2)>0$ above. (See further \refR{Rbias}.)
Indeed,
in our main theorem (\refT{T1}), we express the size of the largest
component in terms
of the survival probability of the approximating Galton--Watson process.
In our case,
with $\E D_n(D_n-2)\to0$, we have to consider one Galton--Watson process for
each $n$, so the question of asymtotics of the survival probability
of an asymptotically critical sequence of  branching processes arises.
This was studied by \eg{} \cite{Athreya92} and \cite{Hoppe92};
we give some further general results (needed to prove our results for random graphs)
in \refS{Ssurv}.

Our proofs, however, do not use the branching process approximation directly;
instead, they are based on extending the method of \cite{JanLuc07}, where
the exploration process is considered one vertex at a time, yielding a kind of
random walk with drift (closely related to the branching process), which is then
analysed. \citet{MolRee95,MolRee98} and \citet{HatamiMolloy} use similar
methods, but there are several differences; for example, we use a
continuous-time version of the exploration process, which gives us additional
independence, and we use a different method to obtain bounds for
the random fluctuations.

\section{Model, assumptions and main results}\label{S2}

\subsection{The configuration model}
\label{sec-model}

Given  a positive integer $n$ and a
\emph{degree sequence}, \ie, a sequence of $n$ positive integers
$(d_1,d_2,\ldots, d_n)$, we let $G(n, (d_i)_{i\in[n]} )$ be a simple graph
(i.e., without loops or multiple edges) with the set $[n]=\{1,\ldots,n\}$ of
vertices, chosen
uniformly at random  subject to vertex $i$ having degree $d_i$, for $i\in[n]$. We tacitly assume that there is any such graph at all, so,
for example, $\sum_{i\in[n]} d_i$ must be even.

We follow the standard path of studying
$G(n, (d_i)_{i\in[n]}) $ using the \emph{configuration model}, defined as
follows, see \eg{} \cite{Bollobas}.
Given a degree sequence $\ddn$  with $\sumin d_i$
even,
we start with $d_j$ free half-edges
adjacent to vertex $j$, for $j=1, \ldots, n$. The random multigraph
$G^*(n,(d_i)_1^n)$ is constructed by successively pairing, uniformly at random,
free half-edges into edges, until no free half-edges remain. (In other
words, we create a uniformly random matching of the half-edges.)
Loops and multiple edges may occur in $G^*(n,\ddn)$,
but we can obtain $G(n,\ddn)$ by conditioning $G^*(n,\ddn)$ on being simple (that is, without loops or multiple edges).
Moreover, our condition \ref{A2} below implies that
the probability of obtaining a simple graph
is bounded away from $0$ as $n \to \infty$;
see \cite{Jans06b,simpleII, AngHofHol16}.

We assume that we are given such a degree sequence $(d_i)_{i\in[n]}$ for each $n$
(at least in a subsequence), and we consider asymptotics as \ntoo.
The degrees $d_i=d_i^{\sss(n)}$ may depend on $n$, but for simplicity we do not
show this in the notation.


\subsection{Basic assumptions and notation}
\label{SSassumptions}

All unspecified limits are as \ntoo. We use standard notation for asymptotics.
In particular, $a_n\asymp b_n$, where $a_n$ and $b_n$ are sequences of
positive numbers, means that $a_n/b_n$ is bounded above and below by
positive constants; equivalently, $a_n=O(b_n)$ and $b_n=O(a_n)$.
In contrast, $a_n\sim b_n$ means the stronger $a_n/b_n\to1$.
Furthermore, $a_n\gg b_n$ means $a_n/b_n\to\infty$.

For random variables $X_n$, and positive numbers $a_n$,
$X_n=\op(a_n)$ means $X_n/a_n\pto0$, \ie, $\P(|X_n|>\eps a_n)\to0$ for every
$\eps>0$. Also, $X_n=\Op(a_n)$ means that $X_n/a_n$ is bounded in probability,
\ie, for every $\eps>0$ there exists $C<\infty$ such that $\P(|X_n|>Ca_n)<\eps$
for all $n$ (or, equivalently, for all large $n$).

We let $\Delta_n:=\max_i d_i$ denote the maximum degree in $G(n,\ddn)$ and $G^*(n,\ddn)$.

For $k \in {\mathbb Z}$, we denote by
	\begin{equation}
	n_k:=\#\set{i:d_i=k}
	\end{equation}
the number of vertices of degree
$k$, so that $n=\sum_{k=1}^{\infty} n_k$. Furthermore, let
	\begin{equation}
\label{elln}
	\ell_n:=\sumin d_i=\sum_{k=1}^{\infty} kn_k,
	\end{equation}
the total number of
half-edges; thus the number of edges is $\ell_n/2$.

Let $\Dn$ be the degree of a randomly chosen
vertex in $G(n,\ddn)$ or $G^*(n,\ddn)$; the distribution of $D_n$ is given by
\begin{equation}
    \prob(\Dn=k)=n_k/n.
\end{equation}

Let
\begin{align}
  \label{mun-def}
 \mu_n&:=\E D_n=\sum_{k=1}^{\infty} kn_k/n= \ell_n/n,
\\ \label{nun-def}
\nu_n&:=\frac{\E D_n(D_n-1)}{\E D_n}
=\frac{\sum_{k=1}^{\infty} k(k-1)n_k}{\sum_{k=1}^{\infty} kn_k}
=\frac{\sum_{k=1}^{\infty} k(k-1)n_k}{\ell_n}.
\end{align}
Thus $\mu_n$ is the average degree; $\nu_n$ can be interpreted as the
expected number of new half-edges found when the endpoint of a random
half-edge is explored, see \eqref{EtD} and \refR{Rbias}.


As stated in Section~\ref{sec-intro},
we will study \emph{near-critical}
behaviour; we assume $\nu_n\to 1$
and, for the most part, also that $\nu_n>1$ (and not too small);
this is thus a subcase of the critical case so $v(\cC_1)=\op(n)$.
We define
\begin{equation}\label{eps}
  \eps_n:=\nu_n-1=\frac{\E D_n(D_n-2)}{\E D_n}.
\end{equation}

Our basic assumptions are as follows. (See also the remarks below, and
additional conditions in the theorems.)
\begin{Aenumerate}

\item \label{AA}\label{AD}
$D_n$,
the degree of a randomly chosen vertex,
converges in distribution to a random variable $D$
with a finite and positive mean $\mu:=\E D$.
In other words, there exists a
probability distribution
$(p_k)_{k =  0}^\infty$
such that
\begin{equation}\label{ntopk}
\frac{n_k}{n} \to p_k, \qquad k \ge 0,
\end{equation}
and $\mu=\sumko kp_k\in(0,\infty)$. (Thus $p_k=\P(D=k)$.)

\item \label{A2}
\xdef\QA{\arabic{enumi}}
The second moment $\E D_n^2$ is uniformly bounded:
$\E D_n^2=O(1)$.

\item \label{AD>2}
We have $\P(D\notin\set{0,2})>0$.
Equivalently, $p_0+p_2<1$.

\item \label{Anu=1} \label{AO}
$\nu_n\to1$.  
Equivalently, see \eqref{eps},
\begin{equation}\label{epsn}
\eps_n\to0.
\end{equation}
Assuming \ref{AA}, this is also equivalent to
\begin{equation}\label{EDD-2}
\E D_n(D_n-2)\to0.
\end{equation}
\end{Aenumerate}

\medskip

\begin{remark}\label{RD}
  The assumption \ref{AA} that $D_n$ converges in distribution is mainly for
  convenience. By \ref{A2}, the sequence $D_n$ is always tight, so every
  subsequence has a subsequence that converges in distribution to some $D$;
  moreover $\E D<\infty$ follows from \ref{A2} and $\E D>0$ follows from
  \ref{AD>2},
provided the latter is reformulated as $\liminf_\ntoo\P(D_n\notin\set{0,2})>0$.
It follows, using standard subsequence arguments,
that results such as \refT{T1} that do not use $D$ (explicitly or
implicitly) in the statement hold also without \ref{AA}.
\end{remark}

\begin{remark}  \label{RD2}
\ref{A2} implies uniform integrability of $D_n$ and thus,
together with \ref{AD},
\begin{align}\label{mu}
  \mu_n& \to\mu,
\end{align}
Furthermore, it is easy to see that, assuming \ref{AA}, \ref{A2}
is equivalent to $\nu_n=O(1)$. In particular, \ref{A2} is implied by \ref{AO}; however, we list \ref{A2}
separately for emphasis and for easier comparison with conditions in other
papers.

By Fatou's lemma, \ref{A2} also implies $\E D^2<\infty$.
\end{remark}

\begin{remark}\label{RUI}
Condition \ref{A2} is weaker than the condition
\begin{enumerate}
  \renewcommand{\labelenumi}{\textup{(A\QA${}'$)}}%
\renewcommand{\theenumi}{\labelenumi}%
  \item
\label{A2'} $D_n^2$ are uniformly integrable.
\end{enumerate}
As is well known, \ref{A2'} is, assuming \ref{AA}, equivalent to
$\E D_n^2\to \E D^2<\infty$, and thus also to $\E D^2<\infty$ and
  	\begin{align}
	\nu_n&\to \nu:=\frac{\E D(D-1)}{\E D}. \label{nu}
  	\end{align}
In this case, \ref{AO} is thus equivalent to $\nu=1$,
or, equivalently, $\E D(D-2)=0$, or $\E D^2 = 2\mu$.

On the other hand, if \ref{AA}, \ref{A2} and \ref{AO} are satisfied but
\ref{A2'} is not, then
(by Fatou's lemma) $\E D^2<2\mu$, $\E D(D-2)<0$ and $\nu<1$.

We will \emph{not} need \ref{A2'} in the present paper,
except when explicitly stated; 
it is satisfied in most examples.
\end{remark}

\begin{remark}\label{Rneq2}
\ref{AD>2} rules out the degenerate case when $D\in\set{0,2}$ a.s.; for examples
of exceptional behaviour in this case, see \cite[Remark 2.7]{JanLuc07}.

Since $\E D(D-2)\le0$, see \refR{RUI},
\ref{AD>2} is equivalent to $\P(D=1)>0$.
Furthermore,
if  $D_n^2$ are uniformly integrable, so $\E D(D-2)=0$, see \refR{RUI}, then
\ref{AD>2} is also equivalent to $\P(D>2)>0$.
\end{remark}

\subsection{The size-biased distribution}\label{SSsize-biased}

Let $\Dn^*$ denote the size-biased distribution of $\Dn$, \ie,
	\begin{equation}
    	\prob(\Dn^*=k)=\frac{k}{\expec[\Dn]}\prob(\Dn=k),
	\end{equation}
and let $\tD_n:=\xD_n-1$,
i.e.,
	\begin{equation}
	\label{size-biased-degree}
	\P(\tD_n=k-1)=
 	 \P(\xD_n=k)=\frac{k\P(D_n=k)}{\E D_n}
	=\frac{kn_k}{n\mu_n},
	\qquad k\ge1.
	\end{equation}
For any non-negative function $f$,
	\begin{equation}\label{fxD}
  	\E f(\xD_n)=\frac{\E D_n f(D_n)}{\E D_n};
	\end{equation}
and thus
	\begin{equation}\label{ftD}
  	\E f(\tD_n)=\frac{\E D_n f(D_n-1)}{\E D_n};
	\end{equation}
in particular
	\begin{equation}\label{EtD}
  	\E \tD_n=\E(\xD_n-1)
	=\frac{\E (D_n(D_n-1))}{\E D_n}=\nu_n=1+\eps_n.
	\end{equation}

Similarly,
let $\xD$ have the size-biased distribution of $D$,
and let $\tD:=\xD-1$. Thus $\E \tD=\nu=1$.
Since $D_n\dto D$ by \ref{AA} and $\E D_n\to \E D$ by \eqref{mu}, it follows
that
$\xD_n\dto \xD$ and $\tD_n\dto \tD$.

Note that \ref{AD>2} implies that (and, given \ref{AA}, is equivalent to)
	\begin{equation}
	\lim_\ntoo\P(\tD_n\neq1)=\P(\tD\neq1)=\P(\xD\neq2)>0.
	\end{equation}

Let
$\rho_n$ be the survival probability of a Galton--Watson process with
offspring distribution $\tD_n$, starting from one individual.
By \eqref{EtD} and basic branching process theory, $\rho_n>0\iff \eps_n>0$,
and, in this case $\rho_n$ is the unique solution in $(0,1]$ to
	\begin{equation} \label{rhon}
    	1-\rho_n
	=\E (1-\rho_n)^{\tD_n}
	=\sum_{k=1}^{\infty} \frac{kn_k}{n\mu_n} (1-\rho_n)^{k-1}.
	\end{equation}
We study the asymptotics of $\rho_n$ in \refS{Ssurv}.

\begin{remark}\label{Rbias}
We can interpret $D_n^*$ as the degree of a vertex chosen randomly by
choosing a uniformly random half-edge, and $\tD_n$ as the number of additional
half-edges at that vertex.
Consequently,
the initial stages of the
  exploration of a component of $\gnd$, starting from a random vertex, can be
  approximated by a Galton--Watson process with offspring distribution
  $\tD_n$, except that the first generation has distribution $D_n$.
The survival probability $\rho_n$
  is thus closely connected to the probability that this modified
  Galton--Watson process is infinite, which approximates the probability
  that the chosen vertex lies in a large
component. (In the supercritical case, this is asymptotically
the same as the probability of the chosen vertex lying in the \emph{largest} component.)
To be precise, the modified Galton--Watson  process has survival probability
$\E(1-(1-\rho_n)^{D_n})\sim\mu_n\rho_n$, which agrees with the factor
  $\mu_n\rho_n$ in \refT{T1} below, giving the proportion of vertices in the largest component.
\end{remark}

\subsection{Main results}
\label{SSresults}

Our results in this section
hold for both the random simple graph $G_n:=G(n,\ddn)$ and
the random multigraph $G_n^*:=G^*(n,\ddn)$.
We first prove our theorems for $G_n^*$; they then hold for $G_n$, as is standard,
by conditioning on $G_n^*$ being simple.
To be precise, \ref{A2} implies that $\liminf\P(G_N^*\text{ is simple})>0$,
see \cite{Jans06b,simpleII},
and thus the results below (which all say that certain events have small
probabilities) transfer immediately from
$G_n^*$ to $G_n$, except \refT{TC}\ref{TC>}, which is of a different kind and
requires a special argument (given in \refSS{SSTC>graph}).

In order to state our results,
choose either $G_n$ or $G^*_n$;
let $\cC_1$ denote the largest connected component,
and let $\cC_2$ denote the second largest
component. (For definiteness, we choose the component at random if there is
a tie, and we define $\cC_2:=\varnothing$ if there is only one component.)

For a component $\cC$, we write $v(\cC)$ and $e(\cC)$ to denote
the number of vertices and edges in the component, respectively.
Our main theorem is the following precise and general result concerning the
supercritical case.


\begin{theorem}
\label{T1}
Suppose that \ref{AA}--\ref{AO} are
satisfied,  in particular  $\vep_n=o(1)$.
Suppose also that $\vep_n\gg n^{-1/3} (\expec\Dn^3)^{2/3}$.
Then
\begin{align} 
    \vx{\cC_1}&=\mu_n \rho_n n(1+\op(1)),
\label{t11}
\\
    \vx{\cC_2}&=\op( \rho_n n).
\label{t12}
\end{align}
Furthermore,
$\ex{\cC_1}=(1+\op(1))\vx{\cC_1}=\mu_n \rho_n n(1+\op(1))$ and
$   \ex{\cC_2}=\op( \rho_n n)$.
\end{theorem}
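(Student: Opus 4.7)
The plan is to extend the continuous-time exploration of the configuration model from \cite{JanLuc07} to the present weaker-moment setting. Assign an independent $\mathrm{Exp}(1)$ clock to each of the $\ell_n$ half-edges; when a clock rings on a free half-edge, pair it with a uniformly chosen free partner, thereby revealing one edge of $G_n^*$. Let $V_k(t)$ be the number of degree-$k$ vertices all of whose half-edges are still free at time $t$, and let $S(t)$ be the number of sleeping half-edges, i.e., free half-edges at already-touched vertices. Excursions of $S$ above zero correspond to connected components, and $\vx{\cC}$ equals the number of vertices touched during the associated excursion.

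Standard computation gives $\E V_k(t)=n_k e^{-kt}$, and a bookkeeping argument as in \JL{Sect.~4} shows that $S(t)/n$ is approximated by a deterministic skeleton $h_n(t)$ expressible through $\sum_k(n_k/n)e^{-kt}$ and $\mu_n$. Setting $z:=1-e^{-t}$ and comparing with \eqref{rhon} shows that in the supercritical regime $h_n$ has a unique strictly positive zero $\tau_n$ satisfying $1-e^{-\tau_n}=\rho_n$, at which the fraction of touched vertices equals
\[
\sum_k (n_k/n)\bigl(1-e^{-k\tau_n}\bigr)=\mu_n\rho_n(1+o(1)),
\]
the target in \eqref{t11}.

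The heart of the proof is a quantitative concentration estimate on $S(t)-nh_n(t)$ uniformly over $[0,(1+\delta)\tau_n]$. Writing this difference as $M_n(t)+R_n(t)$ with $M_n$ the martingale part of the Doob--Meyer decomposition, a Freedman-type inequality applied to the predictable quadratic variation (which is $\asymp n\tau_n\cdot\E\tD_n^2\asymp n\eps_n$, since $\E\tD_n^2\asymp\E D_n^3$) yields $\sup_t|M_n(t)|=\Op(\sqrt{n\eps_n})$, precisely locating the zero of $S$ near $\tau_n$. The hypothesis $\eps_n\gg n^{-1/3}(\E D_n^3)^{2/3}$ enters crucially through the comparison of scales: the predicted giant size $n\rho_n\asymp n\eps_n/\E D_n^3$ (from the asymptotics of $\rho_n$ supplied by \refS{Ssurv}) dominates the critical-window cluster scale $n^{2/3}(\E D_n^3)^{-1/3}$ exactly under this hypothesis, so there is a single macroscopic excursion of $S$ closing at $\tau_n(1+\op(1))$, with all other excursions of vertex content $o(n\rho_n)$.

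Consequently, the excursion of $S$ corresponding to $\cC_1$ has vertex content $\mu_n\rho_n n(1+\op(1))$, proving \eqref{t11}. The identity $\ex{\cC_1}=(1+\op(1))\vx{\cC_1}$ follows because a near-critical giant is almost a tree, carrying $\op(\vx{\cC_1})$ surplus edges; this is verified by bounding the number of sleeping-to-sleeping pairings within the giant excursion. For \eqref{t12}, the scale comparison gives $\vx{\cC_2}=\op(\rho_n n)$ immediately, and its edge analogue follows since non-giant components are themselves almost trees. The main obstacle is making the martingale estimate rigorous without uniform integrability of $D_n^3$: occasional large-degree vertices create large jumps in $S$, so one must use truncation and a careful variance computation with explicit $\E D_n^3$-dependence, and the hypothesis $\eps_n\gg n^{-1/3}(\E D_n^3)^{2/3}$ is precisely what absorbs the resulting error terms.
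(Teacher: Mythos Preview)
Your high-level strategy matches the paper's: the continuous-time exploration of \cite{JanLuc07}, a deterministic skeleton for the active-half-edge process vanishing at the time $\tau_n$ with $1-e^{-\tau_n}=\rho_n$, and a concentration bound on the fluctuations which then localises the unique long excursion. The final concentration estimate $\Op(\sqrt{n\eps_n})$ you aim for is exactly what the paper establishes (\refL{LXC1}).

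However, there is a genuine gap in your scale analysis. You assert $\rho_n\asymp\eps_n/\E D_n^3$ and use it twice: to evaluate the quadratic variation as $n\tau_n\E\tD_n^2\asymp n\eps_n$, and to compare the giant size $n\rho_n$ with the critical-window scale $n^{2/3}(\E D_n^3)^{-1/3}$. But $\rho_n\asymp\eps_n/\E D_n^3$ is \emph{not} valid under the hypotheses of \refT{T1}; it holds only when $\rho_n\gD_n=O(1)$ (see \refL{Lredlund} and \refT{Tsurv}\ref{tsurv3}) and can fail when a few vertices have degree $\gg 1/\rho_n$, cf.\ \refE{E3a}. In that regime your quadratic-variation bound $n\tau_n\E D_n^3$ can be far larger than $n\eps_n$, so a Freedman inequality based on it is too weak; and your scale comparison for $\vx{\cC_2}$ no longer goes through.

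The paper resolves this by introducing a third scale $\gam_n:=\E\bigl(D_n(1\wedge\ga_n D_n)^2\bigr)$, which is always the correct order of the mean of the (rescaled) active-half-edge process; note the built-in truncation at level $1/\ga_n$. The key technical chain is then: (a) the variance bound in \refL{LXC0} already carries this truncation, giving $\Op(\sqrt{n\eps_n})$ via \eqref{sard} rather than via $\rho_n\E D_n^3$; (b) a Cauchy--Schwarz step (\refL{Lolof}) shows $\eps_n^2=O(\gam_n\E D_n^3)$, whence the hypothesis $\eps_n\gg n^{-1/3}(\E D_n^3)^{2/3}$ yields $\sqrt{n\eps_n}=o(n\gam_n)$; and (c) a separate argument (\refL{Lmarta}) gives $\gD_n=o(n\gam_n)$, needed to control the jump at each restart step \sC1 via \eqref{jlss}. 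Your closing remark about ``truncation and a careful variance computation'' gestures at exactly this issue, but the correct truncation is the one encoded in $\gam_n$, and without it the argument does not close.
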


\begin{remark}\label{Vert-k}
Let $\vxk{\cC_1}$ denote the number of vertices of degree $k$ in $\cC_1$.
It can be seen from our proof of Theorem~\ref{T1}
 that $\vxk{\cC_1}=\mu_n \rho_n \prob(D_n^*=k)n(1+\op(1))$.
\end{remark}

In particular, \refT{T1} leads to the following special cases.

Define, recalling \refR{Rneq2}
\begin{equation}\label{kk}
    \kk:=
\E \tD(\tD-1)=
\frac{\expec[D(D-1)(D-2)]}{\expec[D]}\ge0.
\end{equation}
Note that $\kk=\infty$ if and only if $\E D^3=\infty$.
Furthermore, if
$D_n^2$ are uniformly integrable
(\ie, \ref{A2'} holds),
then $\P(D>2)>0$ by \refR{Rneq2}, and thus $\kk>0$.
In this case, we also have $\E [D(D-2)]=0$, see \refR{RUI}, and thus
we also have the
alternative formula
\begin{equation}\label{kk4}
  \kk=\frac{\E D^3-3\E D^2+2\E D}{\E D}
=\frac{\E D^3-3\E [D(D-2)]-4\E D}{\E D}
=\frac{\E D^3}{\mu}-4.
\end{equation}


The next three theorems are easy consequences of Theorem~\ref{T1}, under our assumptions.

\begin{theorem}
\label{TD3}
Suppose that \ref{AA}--\ref{AO} are satisfied,
and that $D_n^3$ is uniformly integrable.
(Thus, $\E D_n^3\to\E D^3<\infty$.)
Suppose further that $\vep_n n^{1/3} \to \infty$.
Then
	\begin{align} 
 	\vx{\cC_1}&=\frac{2\mu}{\kk}\vep_n n(1+\op(1))
	=\frac{2n \E \bigpar{D_n(D_n-2)}}{\kk}(1+\op(1)),\label{tdn3}
	\\    \vx{\cC_2}&=\op( \eps_n n),  \label{td32}
	\end{align}
where $\kk\in(0,\infty)$ is given by \eqref{kk}.
Furthermore,
$\ex{\cC_1}=(1+\op(1))\vx{\cC_1}$ and
$\ex{\cC_2}=\op( \eps_n n)$.
\end{theorem}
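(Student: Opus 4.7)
The plan is to derive Theorem \ref{TD3} directly from Theorem \ref{T1} by first checking its hypothesis and then computing the first-order asymptotics of the survival probability $\rho_n$.

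First, I would verify T1's hypothesis. Under uniform integrability of $D_n^3$ together with \ref{AA}, we have $\E D_n^3 \to \E D^3 < \infty$, so $(\E D_n^3)^{2/3} = O(1)$. Hence the assumption $\eps_n n^{1/3} \to \infty$ of Theorem \ref{TD3} implies $\eps_n \gg n^{-1/3}(\E D_n^3)^{2/3}$, which is exactly the condition of Theorem \ref{T1}. Therefore \eqref{t11}--\eqref{t12} apply, giving $v(\cC_1) = \mu_n \rho_n n(1+\op(1))$, $v(\cC_2) = \op(\rho_n n)$, and analogous statements for the edge counts.

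The main step is then to establish $\rho_n \sim (2/\kk)\eps_n$. Substituting a Taylor expansion $(1-\rho_n)^k = 1 - k\rho_n + \binom{k}{2}\rho_n^2 + R_k$, with $|R_k| \le C k^3 \rho_n^3$, into the fixed-point equation \eqref{rhon} and taking expectations with respect to $\tD_n$ yields
\begin{equation*}
\rho_n \E \tD_n - \rho_n = \tfrac{1}{2}\rho_n^2 \E \tD_n(\tD_n-1) + O\bigl(\rho_n^3 \E \tD_n^3\bigr).
\end{equation*}
Using \eqref{EtD} this reads $\rho_n \eps_n = \tfrac{1}{2}\rho_n^2 \E \tD_n(\tD_n-1) + O(\rho_n^3 \E \tD_n^3)$. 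By \eqref{ftD} applied to $f(k) = k(k-1)$, $\E \tD_n(\tD_n-1) = \E D_n(D_n-1)(D_n-2)/\E D_n \to \kk$ under UI of $D_n^3$, and similarly $\E \tD_n^3 = O(1)$. Dividing by $\rho_n$ and using $\rho_n \to 0$ (which follows from $\eps_n \to 0$ and the fixed-point equation, or directly from the general asymptotics collected in \refS{Ssurv}), we obtain $\rho_n = (2/\kk)\eps_n(1+o(1))$.

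Combining with Theorem \ref{T1} gives $v(\cC_1) = \mu_n \rho_n n(1+\op(1)) = (2\mu/\kk)\eps_n n(1+\op(1))$ since $\mu_n \to \mu$; the equivalent form $2n\E\bigpar{D_n(D_n-2)}/\kk \cdot (1+\op(1))$ follows from $\mu_n \eps_n = \E D_n(D_n-2)$. For the second-largest component, $v(\cC_2) = \op(\rho_n n) = \op(\eps_n n)$ because $\rho_n = O(\eps_n)$, and the edge statements are immediate from the corresponding ones in Theorem \ref{T1}. The essentially only substantive point is the near-critical branching-process calculation for $\rho_n$, a classical result presumably collected in \refS{Ssurv}; beyond that the theorem is bookkeeping. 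The mild subtlety is ensuring the Taylor-expansion remainder is truly negligible, which is where uniform integrability of $D_n^3$ (rather than just boundedness of $\E D_n^2$) is used, and without which the leading-order constant $\kk$ would not even be finite.
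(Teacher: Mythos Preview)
Your overall strategy is the paper's: verify the hypothesis of Theorem~\ref{T1} (trivial, since $\E D_n^3=O(1)$), establish $\rho_n\sim 2\eps_n/\kk$, and read off the conclusions. The paper does the middle step by invoking Theorem~\ref{Tsurv}\ref{tsurv1} (applied to $X_n=\tD_n$, which yields \eqref{ganD3}); you instead Taylor-expand the fixed-point equation directly.

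There is, however, a gap in your remainder control. You bound the Taylor remainder by $C\rho_n^3\E\tD_n^3$ and then assert ``similarly $\E\tD_n^3=O(1)$''. But by \eqref{ftD}, $\E\tD_n^3=\E\bigl[D_n(D_n-1)^3\bigr]/\E D_n$, which is a \emph{fourth}-moment quantity for $D_n$. Uniform integrability of $D_n^3$ does not bound $\E D_n^4$: one can have $\E D_n^3\to\E D^3<\infty$ while $\E D_n^4\to\infty$ (e.g.\ perturb a fixed distribution by a single vertex of degree $\gD_n$ with $n^{1/4}\ll\gD_n\ll n^{1/3}$). So your error term $O(\rho_n^3\E\tD_n^3)$ need not be $o(\rho_n^2)$.

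The fix is to combine the cubic bound with the trivial bound $|R_k|\le \tfrac12 k^2\rho_n^2$ (valid since, for $k\rho_n\ge1$, one has $-R_k=1-k\rho_n+\binom{k}{2}\rho_n^2-(1-\rho_n)^k\le\binom{k}{2}\rho_n^2$), giving $|R_k|\le C\rho_n^2\,\tD_n^2(\rho_n\tD_n\wedge1)$. Then $\E\bigl[\tD_n^2(\rho_n\tD_n\wedge1)\bigr]\to0$ follows from uniform integrability of $\tD_n^2$ (equivalently, of $D_n^3$) and $\rho_n\to0$. This is exactly what the paper's proof of Theorem~\ref{Tsurv}\ref{tsurv1} does in disguise, via the bound $0\le\phii(x)\le x^2/2$ and uniform integrability; see \eqref{1477}--\eqref{e1477}.
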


\begin{theorem}
\label{TD3infty}
Suppose that \ref{AA}--\ref{AO}
are satisfied,
and that $\E D^3=\infty$. (Thus $\E D_n^3\to\infty$.)
Suppose further that $\vep_n\gg n^{-1/3} (\E D_n^3)^{2/3}$.
Then
	\begin{align} 
   	\vx{\cC_1}&=\op( \eps_n n)  \label{td4}
	.\end{align}
Furthermore,
$\ex{\cC_1}=(1+\op(1))\vx{\cC_1}=\op( \vep_n n)$.
\end{theorem}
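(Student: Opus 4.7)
The plan is to deduce \refT{TD3infty} as a corollary of \refT{T1}. Under the stated hypotheses, \refT{T1} already yields $\vx{\cC_1}=\mu_n\rho_n n\ettop$ and $\ex{\cC_1}=\ettop\vx{\cC_1}$; since $\mu_n\to\mu\in(0,\infty)$ by \refR{RD2}, everything reduces to the purely analytic claim
\begin{equation*}
\rho_n=o(\vep_n)\qquad\text{whenever }\E D^3=\infty.
\end{equation*}

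To prove this, I would work directly with the fixed-point equation \refeq{rhon}. Setting $G_n(s):=\E s^{\tD_n}$ and Taylor-expanding $G_n(1-\rho_n)$ around $s=1$ with integral remainder, the equation $\rho_n=1-G_n(1-\rho_n)$ collapses to
\begin{equation*}
\vep_n\rho_n=\int_0^{\rho_n}(\rho_n-u)\,G_n''(1-u)\,du.
\end{equation*}
Because $G_n''$ has nonnegative power-series coefficients, $u\mapsto G_n''(1-u)$ is nonincreasing on $[0,\rho_n]$, so bounding the integrand from below by $G_n''(1-\rho_n)$ gives the key one-line inequality
\begin{equation*}
\rho_n\le\frac{2\vep_n}{G_n''(1-\rho_n)}.
\end{equation*}
The conclusion $\rho_n=o(\vep_n)$ thus follows once $G_n''(1-\rho_n)\to\infty$.

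For the latter, two inputs are needed. First, $\rho_n\to0$: from $\tD_n\dto\tD$ with $\E\tD=1$ and $\P(\tD\ne1)>0$ (\refR{Rneq2}), the generating function of $\tD$ is strictly convex near $s=1$ and has $s=1$ as its only fixed point in $[0,1]$, so $\rho_n\to0$ by uniform convergence of $G_n$ on $[0,1]$. Second, $\E\tD(\tD-1)=\E[D(D-1)(D-2)]/\mu=+\infty$ under the assumption $\E D^3=\infty$. Writing $G_n''(1-\rho_n)=\E[\tD_n(\tD_n-1)(1-\rho_n)^{\tD_n-2}]$ and truncating to $\{\tD_n\le K\}$ for a fixed $K$, the factor $(1-\rho_n)^{K-2}$ can be peeled off and tends to $1$, so convergence in distribution yields $\liminf_n G_n''(1-\rho_n)\ge\E[\tD(\tD-1)\indic{\tD\le K}]$; letting $K\to\infty$ and invoking monotone convergence completes the argument.

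The main obstacle is handling the double limit in the last step: one cannot directly interchange $\lim_{n\to\infty}$ with the (divergent) expectation defining $G_n''(1-\rho_n)$, and one cannot simply drop the factor $(1-\rho_n)^{\tD_n-2}$, since its effective truncation scale $\tD_n\sim1/\rho_n$ is unknown a priori. Fixing a deterministic cutoff $K$ first sidesteps both issues, reducing to a bounded setting where convergence in distribution applies, and reserving the heavy-tail divergence for the final $K\to\infty$ step.
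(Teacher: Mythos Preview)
Your proof is correct and follows essentially the same approach as the paper. The paper invokes \refT{T1} and then \refT{Tsurv}\ref{tsurv2} (applied with $X_n=\tD_n$, $X=\tD$), whose proof in \refS{Ssurv} is the same second-order expansion of the fixed-point equation followed by a Fatou-type argument; your inline version with $G_n''$ and the truncation at level $K$ is a cosmetic repackaging of that lemma, with the truncation-then-$K\to\infty$ step playing exactly the role of Fatou's lemma in the paper's proof.
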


The results in Theorems \ref{TD3}--\ref{TD3infty}
are more or less best possible of this type:
in intermediate cases, where $\E D^3<\infty$ but $\limsup \E D_n^3>\E D^3$,
neither \eqref{tdn3} nor \eqref{td4} holds in general, see \refR{Rbadrho}.
To be precise, it follows from Examples \ref{E3a} and \ref{E3c} below that
$\E D_n^3=O(1)$ is neither necessary nor sufficient for \eqref{tdn3}.
Similarly, it follows from Examples \ref{E3a} and \ref{E3b} that
$\E D_n^3\to\infty$ is not sufficient for \eqref{td4} and
$\E D^3=\infty$ is not necessary for \eqref{td4}.
In such intermediate cases, partial answers are given by the following
inequalities. Define, in analogy with \eqref{kk},
\begin{equation}\label{kkn}
    \kk_n:=
\E [\tD_n(\tD_n-1)]=
\frac{\expec[D_n(D_n-1)(D_n-2)]}{\expec[D_n]}.
\end{equation}
Note that, since $\eps_n>0$, by \eqref{eps} we have $\E [D_n(D_n-2)]>0$, which in turn
implies $\kk_n>0$.
Furthermore, by Fatou's lemma and \eqref{EDD-2},
	\eqan{
	\label{kkn>}
 	 \liminf_\ntoo \kk_n
	&=\frac{\liminf_\ntoo\expec[D_n(D_n-2)^2]+\lim_\ntoo \E[D_n(D_n-2)]}{\expec  D}\\
	&\ge \frac{\expec[D(D-2)^2]}{\expec  D}>0.\nn
	}
Thus $\kk_n$ is bounded away from 0, and it follows that
\begin{equation}\label{kknR}
  \kk_n\asymp \E D_n^3.
\end{equation}

\begin{theorem}
\label{TDx}
Suppose that \ref{AA}--\ref{AO} are
satisfied.
Suppose also that $\vep_n\gg n^{-1/3} (\expec\Dn^3)^{2/3}$.
\begin{romenumerate}
\item \label{TDxa}
Then
\begin{align}\label{pyret}
    \vx{\cC_1}&\ge \frac{2\mu_n \eps_n}{\kk_n} n(1+\op(1)).
\end{align}
\item \label{TDxb}
  If\/ $\E D_n^3=O(1)$, then
there exists constant $c,C>0$ such that \whp{}
\begin{equation}
	\label{ele}
c\eps_n n\le \vx{\cC_1}\le C\eps_n n.
  \end{equation}
\item \label{TDxc}
  If\/ $\eps_n\gD_n=o(\E D_n^3)$, then
there exists constants $c,c',C,C'>0$ such that \whp{}
\begin{equation}
	\label{win}
c'\frac{\eps_n n}{\E D_n^3}
\le
c\frac{\eps_n n}{\kk_n}
\le \vx{\cC_1}
\le C\frac{\eps_n n}{\kk_n}
\le C'\frac{\eps_n n}{\E D_n^3}.
  \end{equation}
\end{romenumerate}
\end{theorem}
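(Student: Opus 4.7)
My plan is to derive all three parts from Theorem~\ref{T1}, which under the standing hypothesis $\eps_n \gg n^{-1/3}(\E D_n^3)^{2/3}$ gives $v(\cC_1)=\mu_n\rho_n n(1+\op(1))$; since $\mu_n\to\mu\in(0,\infty)$ by \eqref{mu}, it suffices to sandwich $\rho_n$ appropriately in terms of $\eps_n/\kk_n$ and then invoke \eqref{kknR}. The analytic input is the Taylor sandwich (for all $s\in[0,1]$ and integer $k\ge 0$, with the convention $\binom{k}{2}=0$ for $k<2$)
\begin{equation*}
1 - ks + \tbinom{k}{2} s^2 (1-s)^{k-2} \;\le\; (1-s)^k \;\le\; 1 - ks + \tbinom{k}{2} s^2,
\end{equation*}
which follows from the integral form of the Taylor remainder once one observes that $(1-s)^k$ has third derivative of constant sign on $[0,1]$. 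Taking expectation against $\tD_n$ and using \eqref{EtD} and \eqref{kkn}, with $h(s):=\E(1-s)^{\tD_n}-(1-s)$, one gets
\begin{equation*}
-s\eps_n + \tfrac12 s^2\,\E\bigl[\tD_n(\tD_n-1)(1-s)^{\tD_n-2}\bigr] \;\le\; h(s) \;\le\; -s\eps_n + \tfrac12 s^2\kk_n.
\end{equation*}
By \eqref{rhon}, $h(\rho_n)=0$; and $h$ is convex with $h(0)=0$, so $h(s)\ge 0$ at any $s>0$ forces $s\ge\rho_n$.

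Evaluating the upper inequality at $s=\rho_n$ yields $\rho_n\ge 2\eps_n/\kk_n$, which via Theorem~\ref{T1} is exactly \eqref{pyret} and proves~\ref{TDxa}. For the matching upper bounds on $\rho_n$ needed in~\ref{TDxb}--\ref{TDxc}, rather than working at the unknown point $s=\rho_n$, I test a candidate $s_\star=C\eps_n/\kk_n$ in the lower inequality above and verify $h(s_\star)\ge 0$, which forces $\rho_n\le s_\star$. For~\ref{TDxc} the hypothesis $\eps_n\Delta_n=o(\E D_n^3)=o(\kk_n)$ and the fact that $\tD_n\le\Delta_n-1$ give $(1-s_\star)^{\tD_n-2}\ge(1-s_\star)^{\Delta_n}=1-o(1)$, so the lower inequality at $s_\star$ reads $h(s_\star)\ge \frac{\eps_n^2}{\kk_n}\bigl(\tfrac{C^2}{2}(1-o(1))-C\bigr)\ge 0$ for any $C>2$ and $n$ large; combining with \ref{TDxa} and \eqref{kknR} then yields \eqref{win}.

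The main obstacle is part~\ref{TDxb}: under just $\E D_n^3=O(1)$ the product $\eps_n\Delta_n$ need not vanish, so the bound $(1-s_\star)^{\Delta_n}=1-o(1)$ fails. The remedy is to exploit $\kk_n\asymp 1$ (from \eqref{kkn>}, \eqref{kknR}, and the hypothesis) together with $\rho_n\to 0$ (which follows from $\eps_n\to 0$, $\tD_n\dto\tD$ via \ref{AA} and $\mu_n\to\mu$, and the continuity of the smallest fixed point of a subcritical/critical probability generating function), and to truncate: for any fixed $T$,
\begin{equation*}
\E\bigl[\tD_n(\tD_n-1)(1-s_\star)^{\tD_n-2}\bigr]\;\ge\;(1-s_\star)^T\,\E\bigl[\tD_n(\tD_n-1)\indic{\tD_n\le T}\bigr].
\end{equation*}
Since $s_\star=C\eps_n\to 0$ the first factor tends to $1$; and since $\tD_n\dto\tD$, the bounded second factor converges to $\E[\tD(\tD-1)\indic{\tD\le T}]$, which approaches $\kk:=\E\tD(\tD-1)$ as $T\to\infty$. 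Provided $\kk>0$, this delivers $h(s_\star)\ge 0$ for $C$ a sufficiently large absolute constant and $n$ large, hence $\rho_n\le C\eps_n$; combined with \ref{TDxa} and Theorem~\ref{T1} this gives \eqref{ele}. The degenerate limit case $\kk=0$ (which forces $D\in\{0,1,2\}$ a.s., so that the bounded-below contribution to $\kk_n$ in \eqref{kkn>} must come entirely from atypically large-degree vertices) requires letting $T=T_n\to\infty$ slowly while tracking the definite positive contribution to $\kk_n$ from vertices of degree between $3$ and $T_n$, but proceeds on the same lines.
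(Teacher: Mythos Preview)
Your approach is correct and, for parts \ref{TDxb} and \ref{TDxc}, more hands-on than the paper's. The paper derives all three parts from \refT{T1} combined with the general branching-process estimates in \refT{Tsurv}: part \ref{TDxa} from \eqref{rho>}, part \ref{TDxb} from \eqref{rhoO} (whose proof is an indirect Fatou/uniform-integrability contradiction argument), and part \ref{TDxc} from \refT{Tsurv}\ref{tsurv3} (which passes through the transform $\phii(x)=\e^{-x}-1+x$ and the relation \eqref{sar}). You stay instead with the generating-function equation $h(\rho_n)=0$ and the explicit second-order Taylor sandwich for $(1-s)^k$: evaluating the upper inequality at $s=\rho_n$ reproduces \eqref{rho>} exactly as in \eqref{parix}, while the upper bounds on $\rho_n$ come from testing a candidate $s_\star$ and invoking strict convexity of $h$. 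This yields a self-contained elementary argument, at the cost of the general relation \eqref{sar} that the paper develops and reuses elsewhere.

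One correction on part \ref{TDxb}: the degenerate case $\kk=0$ that you try to handle at the end cannot occur under the stated hypotheses, so your fixed-$T$ truncation already suffices. Indeed $\kk=0$ forces $D\in\{0,1,2\}$ a.s.; then by \ref{AD>2} we have $p_1:=\P(D=1)>0$, and \ref{AO} gives $\E[D_n(D_n-2)\indic{D_n\ge3}]\to p_1>0$, so $\liminf_n\E[D_n^2\indic{D_n\ge3}]>0$. But H\"older yields $\E[D_n^2\indic{D_n\ge3}]\le(\E D_n^3)^{2/3}\,\P(D_n\ge3)^{1/3}$, which is $o(1)$ when $\E D_n^3=O(1)$ and $D_n\dto D$ with $\P(D\ge3)=0$. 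This contradiction rules out $\kk=0$. (Your sketched $T_n\to\infty$ repair would in fact require uniform integrability of $\tD_n^2$ to capture a constant fraction of $\kk_n$ below level $T_n$, which is precisely what fails when $\kk=0$; fortunately the case is vacuous.)
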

The lower bounds in \ref{TDxc} are clearly less precise than
the more general
\eqref{pyret},
but are given as companions to the upper bounds.
A weaker and less precise version of the lower bound \eqref{pyret} was given
by \citet[Theorem 1.3]{HatamiMolloy}.

\begin{remark}
We see from Theorems \ref{TD3}--\ref{TDx}
that in the barely supercritical regime,
for a given sequence $\eps_n$, the giant component is smaller
in cases where $\E D^3 = \infty$
than in cases where $\E D_n^3$ is bounded.
(In both cases, the size of the giant component is by \refT{T1} roughly
$n\rho_n$.)
The barely supercritical behaviour of
the largest connected component when $\expec[D_n^3]=O(1)$
is similar to that in the Erd\H{o}s-R\'enyi random graph.
\end{remark}

The condition $\vep_n\gg n^{-1/3} (\E D_n^3)^{2/3}$
in the theorems above is best possible and characterizes supercritical
behaviour in the sense that, if $\vep_n$ is smaller, then, unlike \eqref{t11}, $v(\cC_1)$ is not
concentrated, as is shown by the following theorem for the
critical window. Part \ref{TC<} is proved by \citet[Theorem 1.1]{HatamiMolloy}
under very similar conditions, including a slightly stronger assumption than
\eqref{gDo}.

\begin{theorem}  \label{TC}
Suppose that \refAA{} hold and
$\vep_n=O(n^{-1/3} (\E D_n^3)^{2/3})$.
Suppose further that
	\begin{equation}
  	\label{gDo}
	\gD_n = o\bigpar{(n\E D_n^3)\qqq}.
	\end{equation}
Then the following hold:
\begin{romenumerate}
\item \label{TC<}
$\vx{\cC_1}=\Op\bigpar{n^{2/3} (\E D_n^3)^{-1/3}}$.
In other words, for any $\gd>0$ there exists $K=K(\gd)$ such that
	\begin{equation}\label{tc<}
 	\P\bigpar{\vx{\cC_1}>K{n^{2/3} (\E D_n^3)^{-1/3}}}<\gd.
	\end{equation}
\item \label{TC>}
Moreover, for any $K<\infty$,
	\begin{equation}\label{tc>}
	\liminf_\ntoo  \P\bigpar{\vx{\cC_1}>K{n^{2/3} (\E D_n^3)^{-1/3}}}>0.
	\end{equation}
\end{romenumerate}
Both~\ref{TC<} and~\ref{TC>} hold with $\vx{\cC_1}$ replaced by $e(\cC_1)$.
\end{theorem}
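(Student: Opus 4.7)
The plan is to prove both parts first for the multigraph $G_n^*$ and then transfer to $G_n$. Throughout, write $\gan := n^{2/3}(\E D_n^3)^{-1/3}$ for the conjectured window scale; by \eqref{kknR} we have $\kk_n\asymp\E D_n^3$, so the critical-window hypothesis $\eps_n = O(n^{-1/3}(\E D_n^3)^{2/3})$ can be rewritten as $\eps_n^2\gan/\kk_n = O(1)$.

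For part \ref{TC<} I would rerun the continuous-time exploration of $G_n^*$ developed in the proof of \refT{T1}, but applied globally rather than only up to the first giant excursion. In that parametrization the driving walk $S_t$ has drift $\mu_n(\eps_n - \kk_n t + O(t^2))$ and local quadratic variation of order $\kk_n$ per unit time, and the assumption $\gD_n = o((n\E D_n^3)^{1/3})$ in \eqref{gDo} ensures that no single half-edge contributes a macroscopic jump. In the critical window, drift and diffusion balance on the time scale $t\asymp \gan/n$; the concentration/maximal inequality already developed for $S_t$ in the proof of \refT{T1} then shows that every excursion of $S_t$ above its running minimum has length $\Op(\gan/n)$, which corresponds to a component with $\Op(\gan)$ vertices. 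This is essentially the argument of \citet[Theorem 1.1]{HatamiMolloy} transcribed into our continuous-time framework.

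Part \ref{TC>} is the novel content, and I would attack it by a second-moment estimate for
\begin{equation*}
N_s := \bigabs{\bigcpar{v\in[n] : \vx{\cC(v)} \ge s}}, \qquad s := K\gan.
\end{equation*}
If $\E N_s \ge c(K)\gan$ and $\E N_s^2 \le C(K)\gan^2$ for positive constants depending only on $K$, then Paley--Zygmund gives $\P(\vx{\cC_1}\ge s) \ge \P(N_s>0) \ge c(K)^2/C(K) > 0$, uniformly in $n$, which is \eqref{tc>}. The first moment comes from coupling the breadth-first exploration of the component $\cC(V)$ of a uniformly random vertex $V$ to a Galton--Watson tree with root distribution $D_n$ and subsequent offspring \iid{} $\tD_n$; since $s = o(n/\kk_n)$ in the critical window, the coupling survives with probability $1-o(1)$ up to $s$ explored vertices. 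A quantitative near-critical Kolmogorov--Yaglom tail estimate, uniform in $\eps_n\to 0$ and in the variance $\kk_n\to\infty$, should then yield
\begin{equation*}
\P(T \ge s) \;\asymp\; (\kk_n s)^{-1/2}
\qquad\text{whenever}\qquad
\eps_n^2 s/\kk_n = O(1),
\end{equation*}
and hence $\E N_s = n\,\P(\vx{\cC(V)}\ge s) \asymp K^{-1/2}\gan$. For the second moment I would write $\E N_s^2 = \sum_v \E[X_v N_s]$ with $X_v = \1_{\vx{\cC(v)}\ge s}$, conditioning on $v$ lying in a large cluster: by \ref{TC<} the cluster containing $v$ has at most $C'\gan$ vertices (on an event of probability at least $1-c(K)^2/(2C(K))$ for $C'$ large), and the remaining graph is approximately a configuration model on $n - \Op(\gan)$ vertices, so the contributions of other large clusters give $\E[N_s\mid X_v=1] \le C'\gan + (1+o(1))\E N_s$. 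This yields $\E N_s^2 \lesssim_K \gan^2$, as needed.

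The transfer to $G_n$ is immediate for \ref{TC<} from $\liminf\P(G_n^*\text{ simple}) > 0$ by \ref{A2}. For \ref{TC>} I would exploit the classical fact that under \ref{A2} the total number of self-loops and multiple edges in $G_n^*$ is $\Op(1)$; removing or splitting them alters the component structure by $\Op(1)$ merges and splits, so $\vx{\cC_1(G_n)}$ is at most a constant factor smaller than $\vx{\cC_1(G_n^*)}$ with high probability, and the lower bound transfers after adjusting $K$. The edge version follows analogously, since in the critical window the typical degree in $\cC_1$ is concentrated at $\E D_n^*\to 2$, so $\ex{\cC_1}\asymp\vx{\cC_1}$ in the relevant regime. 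The main technical obstacle is the uniform near-critical branching-process tail estimate displayed above: classical Kolmogorov and Yaglom results treat a fixed offspring distribution, whereas here both $\kk_n\to\infty$ and the near-critical drift vary with $n$. I would approach this via generating functions and a saddle-point argument, using $\gD_n = o((n\E D_n^3)^{1/3})$ in \eqref{gDo} to rule out any single atom of the offspring distribution heavy enough to spoil the Gaussian-type local limit.
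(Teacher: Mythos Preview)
Your approaches to both parts differ substantially from the paper's.

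For \ref{TC<}, the paper does not analyse excursion lengths of the exploration walk directly. Instead it uses a monotonicity trick: it modifies the degree sequence by converting $2m_n$ vertices of degree~$1$ into $m_n$ of degree~$0$ and $m_n$ of degree~$2$, with $m_n \asymp n^{2/3}R_n^{2/3}\omega(n)^{2/3}$ for a slowly growing $\omega(n)$. This pushes $\hat\eps_n$ just above the critical window, so \refT{TDx}\ref{TDxc} applies to the modified sequence and yields $v(\hat\cC_1)=o\bigl(n^{2/3}R_n^{-1/3}\omega(n)\bigr)$; since the modification only merges pairs of degree-$1$ vertices, $v(\cC_1)\le 2v(\hat\cC_1)$ deterministically, and letting $\omega(n)\to\infty$ arbitrarily slowly gives the $\Op$ bound. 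Your route via excursion analysis would require new maximal inequalities tailored to the critical window, whereas the paper's route reuses the supercritical machinery already in place.

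For \ref{TC>} in the multigraph, the paper does not use a second-moment argument. It fixes $t_1=(nR_n)^{-1/3}$ and proves central limit theorems for $\tS_n(t_1)$ and $L_n(t_1)$ separately (\refL{LB1}); the key numerical fact is $\liminf\Var\tS_n(t_1)/\Var L_n(t_1)>1$, which guarantees that with probability bounded below, $\tA_n(t_1)=L_n(t_1)-\tS_n(t_1)$ exceeds its mean by a fixed multiple of $\sigma_n\sim(nR_n)^{1/3}$. Conditional estimates (Lemmas~\ref{LB3}--\ref{LB4}) then propagate this to $\tA_n(t)>0$ on all of $[t_1,Bt_1]$ with positive probability, forcing a single component to be explored over an interval of length $\asymp t_1$ and hence to contain $\asymp n t_1$ vertices. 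Your second-moment route is a legitimate alternative in principle, but the uniform near-critical tail $\P(T\ge s)\asymp(\kk_n s)^{-1/2}$ you invoke is not available in the paper and, as you acknowledge, is the crux; establishing the \emph{lower} bound uniformly as $\kk_n\to\infty$ under only \ref{A2} and \eqref{gDo} is a genuine piece of work that your sketch does not supply.

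Your transfer of \ref{TC>} to the simple graph $G_n$ contains a real gap. Deleting the $\Op(1)$ loops and multiple edges of $G_n^*$ does \emph{not} produce a graph distributed as $G_n=(G_n^*\mid G_n^*\text{ simple})$, so controlling $v(\cC_1)$ for the edge-deleted multigraph says nothing about $G_n$. What is actually needed is that $\P\bigl(\{v(\cC_1)>K n^{2/3}R_n^{-1/3}\}\cap\es\bigr)$ is bounded away from~$0$, and there is no a~priori reason the large-component event and the simplicity event $\es$ are not nearly disjoint. The paper handles this (\refL{Ls}) by showing that the CLTs of \refL{LB1} survive conditioning on $\es$: one stops the exploration at the first component completion $T'$ after $t_1$, observes that the unexplored part is again a configuration model with $\tilde\nu_n=\nu_n+\op(1)$, and deduces $\P(\es\mid\cF_{T'})=\P(\es)+\op(1)$, whence approximate independence. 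This step has no analogue in your sketch and cannot be bypassed by the loop-removal heuristic.
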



\refT{TC} says that $\vx{\cC_1}/\bigpar{n^{2/3} (\E D_n^3)^{-1/3}}$ is
bounded in probability,
but not \whp{} bounded by any fixed constant. In particular,
$\vx{\cC_1}$ normalized in this way converges in distribution,
at least along suitable subsequences,
but it does not converge to a constant along any subsequence; hence the
limit in distribution (along a subsequence) is really random and not
deterministic. Moreover, \refT{TC}\ref{TC>} shows that any subsequential
limit has unbounded support. (The result by \citet[Theorem 1.1(a)]{HatamiMolloy}
shows that any subsequential limit is strictly positive a.s.)
This is in contrast to the supercritical case in Theorem \ref{T1}.
(This contrast is well known in the classical \ER{} case $G(n,p)$, see \eg{}
\citet{Aldo97}, who describes the limit distribution explicitly.)

\begin{remark}\label{RgDo}
Condition \eqref{gDo} can be written as
  \begin{equation}
	\max_i d_i^3 =o\Bigpar{\sumin d_i^3}.
  \end{equation}
It thus says that no single vertex gives a significant contribution to
$\sumin d_i^3$.
See \cite[Section 1.2]{HatamiMolloy} and \refE{Efel} below for  counterexamples in the
case when
\eqref{gDo} does not hold.
Note also that always $n\E D_n^3=\sumin d_i^3\ge\gD_n^3$, so $\gD_n\le (n\E
D_n^3)\qqq$. Hence, \eqref{gDo} is only a weak restriction.
(Hatami and Molloy~\cite{HatamiMolloy} use a slightly stronger assumption, which, roughly, amounts to assuming $\Delta_n = O( (n \E D_n^3)^{1/3}/\log n)$.)
\end{remark}

\begin{remark}
If $\vep_n\asymp n^{-1/3} (\E D_n^3)^{2/3}$, so we are on the upper boundary of
  the critical window in \refT{TC}, then, using \eqref{gDo},
$\eps_n\gD_n=o(\E D_n^3)$ and thus \refT{Tsurv}\ref{tsurv3} applies to a
  Galton--Watson process with offspring distribution $\tilde{D}_n$ starting
  with one individual (as in
  the proof of \refT{TDx}\ref{TDxc}), and yields
$\rho_n\asymp \eps_n/\E D_n^3\asymp n^{-1/3} (\E D_n^3)^{-1/3}$.
Thus \refT{TC} shows that the
  giant component is of order $\Op(n\rho_n)$ in this case too, although
  $\vx{\cC_1}/(n\rho_n)$ does not converge to a constant.
\end{remark}

\begin{example}[Power-law degrees]
\label{exam-power-law}
Many real-world networks are claimed to have power-law degree sequences, see e.g.,
\cite[Chapter 1]{Hofs16} and the references therein. As a result, various random graph models have been proposed that can yield such graphs,
the configuration model being one of the most popular. Let $\gamma>1$ and assume that, in addition to the assumptions
above, for some constants $C,c>0$,
  	\begin{align}
	\P(D_n> k)&\le C k^{-\gamma}, \qquad k\ge 1, \label{etau1}
	\\
	\P(D_n> k)&\ge c k^{-\gamma}, \qquad 1\le k < \eps_n^{-1/(\gamma-1)}.
	\label{etau2}
  	\end{align}
(The upper limit $\eps_n^{-1/(\gamma-1)}$ in \eqref{etau2} could be reduced
	by any fixed constant factor.  Note that some limit is required, since
	$D_n$ is discrete and \eqref{etau1} implies $\gD_n=O(n^{1/\gam})$.)
Then, in \refT{Tsurv}\ref{tsurv0} and
Example \ref{Ebeta} below, we show that $\rho_n\asymp \vep_n$ when
$\gamma>3$
(so $\E D_n^3=O(1)$), while
	\begin{equation}\label{charlietau}
	\rho_n\asymp \eps_n^{1/(\gamma-2)}
	\end{equation}
when $\gamma\in(2,3)$. \refT{T1} applies and yields that, for $\gamma>3$, and using the form in \refT{TD3},
$\vx{\cC_1}=\frac{2n}{\kappa} \E \bigpar{D_n(D_n-2)}(1+\op(1))$, while, for $\gamma\in(2,3)$,
$\vx{\cC_1}\asymp n \eps_n^{1/(\gamma-2)}.$
\end{example}


\begin{remark}\label{rem-crit}
The critical regime as in \refT{TC} has attracted considerable attention, see e.g., \cite{DhaHofLeeSen16a, DhaHofLeeSen16b, Jose10,Rior12} for results on the sizes of the largest connected components.
\citet{Rior12} investigates the scaling behavior of near-critical clusters under the assumption that all degrees are uniformly bounded. \citet{DhaHofLeeSen16a} perform an analysis under conditions that are close to ours when $\E D_n^3\to \E D^3$, but focus on the scaling limit of critical clusters when $\nu_n=1+\lambda n^{-1/3}+o(n^{-1/3})$ (also for percolation on the configuration model, where the dependence on $\lambda$ is identified as the multiplicative coalescent, see also \citet{Aldo97}).

In the case where $\E D^3=\infty,$ and in the same vein as Example
\ref{exam-power-law}, often stronger assumptions are made and our results in
\refT{TC} in this case are closest in spirit to those in \cite{HatamiMolloy}
in that they only depend on the scaling of $\vep_n$ and
$\expec[D_n^3]$. Order the degrees such that $d_1\geq d_2\geq \cdots \geq
d_n$. \citet{Jose10} assumes that $(d_i)_{i\in[n]}$ are an i.i.d.\ sample
from a distribution whose distribution function satisfies $1-F(x)=c
x^{-\gamma} (1+o(1))$ for $x$ large. In this case, $(d_i
n^{-1/\gamma})_{i\geq 1}$ jointly converge in distribution to
$(c'\Gamma_i^{-1/\gamma})_{i\geq 1},$ where $(\Gamma_i)_{i\geq 1}$ form a
Poisson point process. \citet{DhaHofLeeSen16a} instead take $d_i$ such that
$d_i n^{-1/\gamma}\to c_i$, and, in particular, $\E D_n^3\sim n^{3/\gamma-1}
\sum_{i\geq 1} c_i^3$, where it is assumed that $\sum_{i\geq 1}
c_i^3<\infty$, while $\sum_{i\geq 1} c_i^2=\infty$ (as is the case when
$c_i\asymp i^{-1/\gamma}$ with $\gamma\in(2,3)$). In this case, \refT{TC}
suggests that the largest critical components should scale like
	\eqn{
	n^{2/3} (\E D_n^3)^{-1/3} \asymp n^{2/3} (n^{3/\gamma-1})^{-1/3}=n^{(\gamma-1)/\gamma}.
	}
The results in \cite{Jose10}, as well as the work~\cite{DhaHofLeeSen16b} in preparation, confirm this scaling, and show
that the sizes of the largest connected components,  rescaled by
$n^{-(\gamma-1)/\gamma}$, converge to a limiting sequence, while the
critical window is of order $n^{-(\gamma-2)/\gamma}$. Interestingly, the
description of this limit looks quite different in  \cite{Jose10} compared
to  \cite{DhaHofLeeSen16b},
which is probably due to the fact that \citet{Jose10} also averages out over
the randomness in the degrees.
\end{remark}

\subsection{Complexity of large components}
The structure of components has received substantial attention in the
literature, in particular, the
existence of multicyclic components, \ie, components $\cC$ with
$\ex{\cC}>\vx{\cC}$.
The detailed scaling limit results in
\cite{DhaHofLeeSen16a, DhaHofLeeSen16b, Jose10,Rior12} resolve this question
completely in the critical case.
We  investigate this question in the barely supercritical setting
in \refS{Scomplex} and find asymptotically the complexity of the largest
component $\cC_1$,
see Theorems \ref{TK} and \ref{TK3}--\ref{TK3infty}.
Here, for power-law degrees as in Example \ref{exam-power-law}, the width of
the critical window is tightly related
to the growth of the complexity of the barely supercritical clusters. As can
be expected, the complexity of $\cC_1$
interpolates between tight, as in the critical case, and linear in $n$ as in
the strictly supercritical regime (as shown in \cite{MolRee98}).

\subsection{Discussion}
\label{sec-discussion}
In this section, we discuss our results and pose further questions.

\subsubsection*{CLT for the giant component.}
It would be of interest to extend Theorem \ref{T1} to a statement about
the fluctuations of $\vx{\cC_1}$ around $\mu_n \rho_n n$. In the light of central limit results for the processes
that characterize the component sizes (see, e.g., Lemma \ref{LB1}), it is tempting to
conjecture that a CLT holds for $\vx{\cC_1}$. From our methodology, however, this does not follow easily.
A related question involves proving a CLT for the complexity $k(\cC_1)$
  in the barely supercritical regime.
(Cf.~\cite{PittelWormald2005} for the \ER{} case.)

\subsubsection*{Related random graphs.}
Often, one can deduce results for rank-1 inhomogeneous random graphs
(see \cite{BolJanRio07} for the definition)
from those derived for the configuration model
conditioned on simplicity.
Examples of such graphs are
the Poissonian or {\it Norros--Reittu} random graph \cite{NorRei06}, the
\emph{generalized random graph model} \cite{BriDeiMar-Lof05}, and the
\emph{expected degree}
or \emph{Chung--Lu} random graph
\cite{ChuLu02a,ChuLu02b,ChuLu03,ChuLu06}. In each of these models, edges are
present independently: an edge between $i,j \in [n]$ is present with
probability $p_{ij}$, where $p_{ij}$ is close to $w_iw_j/\ell_n$ for
appropriately chosen vertex weights $(w_i)_{i\in[n]}$, and
$\ell_n=\sum_{i\in[n]}w_i$ denotes the total weight.
When the weight sequence satisfies conditions similar to
\ref{AA}--\ref{Anu=1}, then also the random vertex degrees do, and thus
results carry over rather easily from the configuration model to these
models.

In slightly more detail, by \cite{Jans08a}, in the case where $\expec[D_n^2]\rightarrow \expec[D^2]$,
the above three random graph models are asymptotically equivalent, so that proving a result for one immediately establishes it for any of
the others as well. Furthermore,
when conditioned on the degree sequence, the generalized random graph
is a uniform random graph with that degree sequence \cite{BriDeiMar-Lof05}. We already know
that Theorem \ref{T1} holds for
uniform random graphs whose vertex degrees obey
conditions~\ref{AA}--\ref{Anu=1}, so that, by conditioning on the degree
sequence,
in order to deduce
the same for rank-1 inhomogeneous random graphs,
it suffices to prove that \ref{AA}--\ref{Anu=1} indeed hold
(with convergence in probability)
for the degrees
for the generalized random
graph in the critical case.
This proof is standard, and can, for example, be found in \cite{BhaHofHoo13b} or \cite[Section 7.7]{Hofs16}.
The critical case of these models was studied in \cite{BhaHofLee09a,BhaHofLee09b}.

\section{The branching process survival probability}\label{Ssurv}

Our proofs of Theorems~\ref{T1}
and \ref{TD3}--\ref{TDx} will use
some estimates of the survival
probability of barely supercritical Galton--Watson processes.
In this section, we state and prove these
estimates in a general form, for general Galton--Watson processes. We return
to the setting of the configuration model in the later sections,
where we apply the results stated below with offspring distribution $X_n=\tD_n$.

Relation \eqref{e:tsurv1} below was conjectured and supported by a heuristic
argument by \citet{Ewens69};
\citet{Eshel81} gave counter-examples but also a proof of
\eqref{e:tsurv1} under some conditions. More general sufficient conditions
were given by \citet{Hoppe92} and \citet{Athreya92}; both also gave a
necessary and sufficient condition for \eqref{e:tsurv1} in terms of the
probability generating function of the offspring distribution $X_n$.
(The necessary and
sufficient conditions in \cite{Hoppe92} and \cite{Athreya92} are stated
differently, but they can be seen to be equivalent, using integration by parts.)
Here we give further results, stated in a form more suitable for our purposes,
but note that there are overlaps with earlier ones in the literature.
In particular, \refT{Tsurv}\ref{tsurv1} follows easily from results in both
\cite{Hoppe92} and \cite{Athreya92}.
Furthermore, \eqref{rho>} was given by \cite[Corollary 3.3]{Hoppe92} (in an
equivalent formulation).

\begin{theorem}[Survival probability of a near-critical branching process.]
\label{Tsurv}
Let $X_n$ be a sequence of non-negative integer-valued random variables such
that
$\expec[X_n]=1+\vep_n$, where
$\vep_n > 0$ and $\vep_n \to 0$ as $n \to \infty$.
Suppose also that
$\liminf_n \prob (X_n \not = 1) > 0$.
Let $\rho_n$ be the survival probability of a branching process with offspring
distribution $X_n$, starting with one individual,
i.e.,
the unique solution in $(0,1]$ to
    \eqn{
    \lbeq{rhon}
    1-\rho_n=\expec[(1-\rho_n)^{X_n}].
    }
Then $\rho_n\to0$ and, more precisely,
\begin{equation}
  \label{rhoO}
\rho_n=O(\eps_n).
\end{equation}
Furthermore,
\begin{equation}\label{rho>}
  \rho_n \ge \frac{2\eps_n}{\E X_n(X_n-1)}
\end{equation}
and
\begin{equation}\label{sar}
  \vep_n \asymp \E\bigpar{X_n\land(\rho_nX_n^2)}.
\end{equation}
Moreover:
\begin{romenumerate}
\item \label{tsurv0}
If\/ $\E X_n^2=O(1)$, then
$\rho_n\asymp\eps_n$.

\item \label{tsurv1}
If\/ $X_n\convd X$ for some random variable $X$ 
and\/
$\expec[X_n^2]\rightarrow \expec[X^2] < \infty$, then,
    \eqn{
    \lbeq{tsurv1}
    \rho_n\sim\frac{2\vep_n}{\expec[X(X-1)]}.
    }
\item \label{tsurv2}
If\/ $X_n\convd X$ for some random variable $X$ with
$\expec[X^2]= \infty$, then
    \eqn{
    \lbeq{tsurv2}
    \rho_n=o(\vep_n).
    }
\item \label{tsurv3}
  If\/ $\gD_n$ are numbers such that $X_n\le \gD_n$ \as{} and
  $\eps_n\gD_n=o(\E X_n^2)$,
then
\eqn{
    \lbeq{tsurv3}
    \rho_n\asymp\frac{\eps_n}{\E[X_n(X_n-1)]}
\asymp \frac{\eps_n}{\E X_n^2}.
    }
\end{romenumerate}
\end{theorem}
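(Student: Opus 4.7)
The plan is to reduce everything to a single master identity. Starting from $\rho_n = \E\bigl[1 - (1-\rho_n)^{X_n}\bigr]$, I would use the telescoping identity $1 - (1-y)^k = y\sum_{j=0}^{k-1}(1-y)^j$ with $y = \rho_n$ to rewrite the defining equation as $1 = \E\bigl[\sum_{j=0}^{X_n - 1}(1-\rho_n)^j\bigr]$, and subtract this from $\E X_n = 1 + \eps_n$ to obtain
\begin{equation*}
\eps_n = \E\Bigl[\,\sum_{j=0}^{X_n - 1}\bigl(1 - (1-\rho_n)^j\bigr)\,\Bigr]. \qquad (\star)
\end{equation*}
Existence and uniqueness of $\rho_n \in (0,1]$ is standard from the strict convexity of $s \mapsto \E[s^{X_n}]$ on $[0,1]$ together with $\E[X_n] > 1$. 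Every bound in the theorem will then follow by inserting elementary two-sided estimates for $1 - (1-\rho_n)^j$ into $(\star)$.

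I would deduce \eqref{rho>}, \eqref{sar}, and \eqref{rhoO} in that order. For \eqref{rho>}, Bernoulli's inequality $1 - (1-\rho_n)^j \le j\rho_n$ yields $\eps_n \le \tfrac{\rho_n}{2}\E[X_n(X_n-1)]$. For \eqref{sar}, I would sandwich the summand as $\tfrac12\min(j\rho_n,1) \le 1 - e^{-j\rho_n} \le 1 - (1-\rho_n)^j \le \min(j\rho_n,1)$, and then check by a direct calculation that $\sum_{j=0}^{X-1}\min(j\rho_n,1) \asymp \min(\rho_n X^2, X)$ for $X \ge 2$; the contribution of $\{X_n \le 1\}$ to $\E[\min(\rho_n X_n^2, X_n)]$ is at most $\rho_n$, which is absorbed into $O(\eps_n)$ by \eqref{rhoO}. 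For \eqref{rhoO} itself, I would keep only terms with $j \ge 1$ in $(\star)$ and use $1 - (1-\rho_n)^j \ge \rho_n$ to get $\eps_n \ge \rho_n\,\E[(X_n - 1)_+]$. The heart of this step is that the algebraic identity $\E[(X_n-1)_+] = \eps_n + \P(X_n = 0)$ together with $\E[(X_n-1)_+] \ge \P(X_n \ge 2)$ forces $\liminf_n \E[(X_n-1)_+] \ge \tfrac12\liminf_n \P(X_n \ne 1) > 0$, since at least one of $\P(X_n=0)$ and $\P(X_n \ge 2)$ must carry half that mass.

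The four sub-cases then fall out with light extra work. For \ref{tsurv0}, I would note $\eps_n \asymp \E[\min(\rho_n X_n^2, X_n)] \le \rho_n \E[X_n^2] = O(\rho_n)$ and combine with \eqref{rhoO}. For \ref{tsurv1}, I would refine the lower bound on the summand to $1 - (1-\rho_n)^j \ge j\rho_n - \binom{j}{2}\rho_n^2$, apply it only on $\{X_n \le K\}$ in $(\star)$, and obtain $\eps_n \ge \tfrac{\rho_n}{2}(1 - K\rho_n)\,\E\bigl[X_n(X_n-1)\mathbf{1}\{X_n \le K\}\bigr]$; letting $n \to \infty$ with $K$ fixed at a continuity point of $X$, and then sending $K \to \infty$, one pins down the sharp constant $2/\E[X(X-1)]$ using $\rho_n \to 0$ and uniform integrability of $X_n^2$. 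For \ref{tsurv2}, I would use \eqref{sar} in the form $\eps_n \gtrsim \rho_n \E[X_n^2 \mathbf{1}\{X_n \le K\}]$ (valid once $K\rho_n \le 1$), let $K \to \infty$ through continuity points of $X$, and invoke $\E X^2 = \infty$ to conclude $\eps_n/\rho_n \to \infty$. For \ref{tsurv3}, I would sharpen the lower bound in \eqref{sar} using the elementary $\min(1,y) \ge y/(1+y)$ to get $\eps_n \gtrsim \rho_n \E[X_n^2]/(1 + \rho_n \Delta_n)$; the hypothesis $\eps_n \Delta_n = o(\E X_n^2)$ then precludes $\rho_n \Delta_n \to \infty$ by a one-line contradiction, leaving $\rho_n \asymp \eps_n/\E X_n^2$, which matches $\eps_n/\E[X_n(X_n-1)]$ by comparison with \eqref{rho>}.

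The hardest step will be \ref{tsurv1}. The natural Taylor expansion $1 - (1-\rho_n)^j = j\rho_n - \binom{j}{2}\rho_n^2 + \binom{j}{3}\rho_n^3 - \cdots$ would demand a finite third moment to control the first error term uniformly in $j$, which we do not have here, so I really have to truncate at $K$ and lean on the uniform integrability of $X_n^2$ to push the remainder to zero in two stages (first $n \to \infty$, then $K \to \infty$). Apart from this careful bookkeeping, the whole theorem is just a sequence of elementary bounds on $1 - (1-\rho_n)^j$ substituted into $(\star)$.
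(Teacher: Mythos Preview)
Your proposal is correct and takes a genuinely different route from the paper. The paper passes to the logarithmic variable $\alpha_n:=-\log(1-\rho_n)$ and the function $\Psi(x):=e^{-x}-1+x$, deriving the master identity
\[
\E\frac{\Psi(\alpha_n X_n)}{\alpha_n^2}=\tfrac12+o(1)+\frac{\eps_n}{\alpha_n},
\]
and then obtains \eqref{rhoO} by a subsequence-plus-Skorohod contradiction (showing any limit $X$ would satisfy $\E X^2\le1$ and $\E X=1$, hence $X\equiv1$), \eqref{sar} from $\Psi(x)\asymp x\wedge x^2$, and parts \ref{tsurv1}--\ref{tsurv2} via uniform integrability and Fatou applied to $\Psi(\alpha_n X_n)/\alpha_n^2$. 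Your identity $(\star)$ plays the same structural role but stays with $\rho_n$ and uses only discrete bounds on $1-(1-\rho_n)^j$. The payoff is most visible for \eqref{rhoO}: where the paper argues by two nested contradictions, your inequality $\eps_n\ge\rho_n\,\E[(X_n-1)_+]$ together with $\E[(X_n-1)_+]\ge\tfrac12\P(X_n\neq1)$ gives $\rho_n=O(\eps_n)$ in one line. For \ref{tsurv1} your truncation-at-$K$ argument is essentially equivalent to the paper's uniform-integrability step, just phrased combinatorially rather than through $\Psi$; for \ref{tsurv3} your $\min(1,y)\ge y/(1+y)$ device is a nice alternative to the paper's comparison with an auxiliary threshold $\beta_n$.

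One small presentational slip: you say you will prove \eqref{rho>}, \eqref{sar}, \eqref{rhoO} ``in that order'', but your \eqref{sar} argument invokes \eqref{rhoO} to absorb the contribution of $\{X_n\le1\}$. Since your proof of \eqref{rhoO} is self-contained, just swap the order to \eqref{rho>}, \eqref{rhoO}, \eqref{sar}.
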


\begin{proof}
We first show that $\rho_n=o(1)$ as $n \to \infty$.
(For a more general result on continuity of the survival probability as a
functional of the offspring distribution, see \cite[Lemma 4.1]{SJ199}.)
To see this, assume, for a contradiction, that there exists a subsequence
$n_l$ such that
$\rho_{n_l}\rightarrow \rho>0$. Since $\E X_n=O(1)$, the sequence $X_n$ is
tight, so there exists a further subsequence with $X_n\dto X$ along the
subsequence, for some non-negative integer-valued random variable $X$.
Furthermore, by the Skorohod coupling theorem \cite[Theorem 4.30]{Kallenberg},
we may assume that the
variables $X_{n}$ are defined on a probability space where 
the convergence is almost sure.
Then, by dominated convergence, along the subsequence,
$\expec[(1-\rho_{n})^{X_{n}}] \to \expec[(1-\rho)^{X}]$, and so, by
\eqref{e:rhon},
    \eqn{
    1-\rho=\expec[(1-\rho)^X].
    }
In other words, $\rho$ is the survival probability of a branching process
with offspring distribution $X$.
On the other hand, by Fatou's lemma, $\E X\le \lim\E X_n=1$, so this
branching process is critical or subcritical;
furthermore,  $\P(X\neq1)\ge\liminf_n\P(X_n\neq1)>0$ which excludes the
case $X=1$ a.s. Consequently, the survival probability $\rho=0$, a
contradiction.
Hence $\rho_n \to 0$ as $n \to \infty$.

Define $\ga_n:=-\log(1-\rho_n)>0$, and note that $\rho_n\to0$ implies
\begin{equation}\label{garho}
\ga_n\sim\rho_n.
\end{equation}
Also let
\begin{equation}\label{phii}
  \phii(x):=\e^{-x}-1+x;
\end{equation}
note that $\phii(x)\sim x^2/2$ as $x\to 0$.
Then \eqref{e:rhon} can be written
\begin{equation}
\E \e^{-\ga_n X_n}
= \E (1-\rho_n)^{X_n}=1-\rho_n
=\e^{-\ga_n},
\end{equation}
and thus
\begin{equation}\label{gan1}
  \E\phii(\ga_nX_n)
=\E\bigpar{\e^{-\ga_nX_n}-1+\ga_nX_n}
=\e^{-\ga_n}-1+\ga_n(1+\eps_n)
=\phii(\ga_n)+\ga_n\eps_n.
\end{equation}
Hence,
\begin{equation}\label{gan2}
  \E\frac{\phii(\ga_nX_n)}{\ga_n^2}
=\frac{\phii(\ga_n)}{\ga_n^2}+\frac{\eps_n}{\ga_n}
=\frac12+o(1)+\frac{\eps_n}{\ga_n}.
\end{equation}

Suppose now that \eqref{rhoO} fails. Then there exists a subsequence with
$\eps_n/\rho_n\to0$ and thus, by \eqref{garho} and \eqref{gan2},
\begin{equation}\label{gan20}
  \E\frac{\phii(\ga_nX_n)}{\ga_n^2}\to\frac12.
\end{equation}
As above, by considering a
subsubsequence, we may also assume that $X_n\to X$ \as{} for some random variable $X$,
and then a.s., since $\ga_n\to0$,
\begin{equation}\label{1477}
\frac{\phii(\ga_nX_n)}{\ga_n^2}
\to \frac{X^2}2.
\end{equation}
By Fatou's lemma, \eqref{1477} and \eqref{gan20} yield
	\begin{equation}\label{EX2}
  	\tfrac12\E X^2 \le \liminf_\ntoo
 	\E\frac{\phii(\ga_nX_n)}{\ga_n^2}=\frac12.
	\end{equation}
Furthermore,
since the function $\phii(x)/x$ is increasing on $[0,\infty)$,
\eqref{gan20} implies
that,
for any $K>0$,
  	\begin{equation}
	\limsup_\ntoo \E\bigpar{X_n\indic{X_n\ge K}}
	\le \limsup_\ntoo \E{\frac{ K \phii(\ga_n X_n)}{\phii(\ga_n K)}}
	=\lim_\ntoo \frac{ K \ga_n^2/2}{\phii(\ga_n K)}
	=\frac{1}K.
  	\end{equation}
Hence, still along the subsequence,
the random variables $X_n$ are uniformly integrable, and, since
$\E X_n\to1$ and $X_n\to X$ a.s., we have $\E
X=1$. However, this together with \eqref{EX2} yields $\Var(X)=0$, so $X=1$
a.s., which as above is excluded by our assumption
$\liminf_n\P(X_n\neq1)>0$. This contradiction shows that \eqref{rhoO} holds.

Next, for any integer $m\ge0$ and $\rho\in[0,1]$, $(1-\rho)^m\le
1-m\rho+\binom m2 \rho^2$. Hence,
\begin{equation}\label{parix}
  \begin{split}
  1-\rho_n
&=\E(1-\rho_n)^{X_n}
\le \E\Bigpar{1- X_n\rho_n+\frac{X_n(X_n-1)}2\rho_n^2}
\\&
=1-(1+\eps_n)\rho_n+\frac{\E (X_n(X_n-1))}2\rho_n^2	
  \end{split}
\end{equation}
and \eqref{rho>} follows, recalling $\rho_n>0$.

To show \eqref{sar}, note that
\eqref{rhoO} and \eqref{garho} show that $\ga_n=O(\eps_n)$ and thus
$\eps_n/\ga_n$ is bounded below.
Furthermore, $\phii(x)\asymp x^2\land x$ for $x\ge0$,
and thus, by \eqref{gan2},
	\begin{equation}
	\frac{\vep_n}{\ga_n}
	\asymp \frac{\vep_n}{\ga_n}+\frac12
	\asymp  \E\frac{\phii(\ga_nX_n)}{\ga_n^2}
	\asymp \E\bigpar{(\ga_n\qw X_n)\land X_n^2}.
	\end{equation}
Hence, using \eqref{garho} again,
	\begin{equation}
	\vep_n
	\asymp \E\bigpar{X_n\land (\ga_nX_n^2)}
	\asymp \E\bigpar{X_n\land (\rho_nX_n^2)}.
	\end{equation}
\pfitemref{tsurv0}
An immediate consequence of \eqref{rhoO} and \eqref{rho>}.

\pfitemref{tsurv1}
As above, we may assume $X_n\to X$ \as{} (now for the full sequence),
and thus \eqref{1477}.
Since $X_n\dto X$ and $\E X_n^2\to \E X^2<\infty$, the sequence $X_n^2$ is
uniformly integrable. Furthermore,
$0\le\phii(x)\le x^2/2$ for $x\ge0$ and thus
$0\le \xfrac{\phii(\ga_nX_n)}{\ga_n^2}\le X_n^2/2$, so the sequence
$\xfrac{\phii(\ga_nX_n)}{\ga_n^2}$ is also uniformly integrable, which
together with \eqref{1477} implies
	\begin{equation}\label{e1477}
	\E\frac{\phii(\ga_nX_n)}{\ga_n^2}
	\to\tfrac{1}{2} \E X^2.
	\end{equation}
Moreover, the uniform integrability of $X_n^2$ also implies
$\E X = \lim_{\ntoo}\E X_n=1$.
Using \eqref{e1477} in \eqref{gan2}, we thus find
\begin{equation}\label{gani}
	\frac{\eps_n}{\ga_n}
	= \E\frac{\phii(\ga_nX_n)}{\ga_n^2} -\frac12+o(1)
	=\tfrac12 \bigpar{\E X^2-1}+o(1)
	=\tfrac{1}{2}\E(X(X-1))+o(1).
	\end{equation}
As noted above,
$\E(X(X-1))=\Var X>0$
and thus \eqref{gani}
yields, recalling \eqref{garho},
	\begin{equation}\label{ganj}
	\frac{\eps_n}{\rho_n}
	\sim
	\frac{\eps_n}{\ga_n}
	\sim\tfrac{1}{2}\E(X(X-1)).
	\end{equation}
A rearrangement yields \eqref{e:tsurv1}.

\pfitemref{tsurv2}
We may again assume that \eqref{1477} holds a.s.,
which now by Fatou's lemma implies (cf.\ \eqref{EX2})
\begin{equation}\label{e1477oo}
\E\frac{\phii(\ga_nX_n)}{\ga_n^2}
\to \infty.
\end{equation}
 Thus $\vep_n/\ga_n\to\infty$ by \eqref{gan2}. This yields
\eqref{e:tsurv2}, again using
\eqref{garho}. 

\pfitemref{tsurv3}
Note first that \eqref{rhoO} and \eqref{rho>} imply that
$1/\E[X_n(X_n-1)]=O(1)$, \ie, that $\E [X_n(X_n-1)]$ is bounded below. Since
$X_n(X_n-1)\le X_n^2\le 1+2X_n(X_n-1)$, it follows that
$\E [X_n(X_n-1)]\asymp\E X_n^2$,
and thus the final ``$\asymp$'' in \eqref{e:tsurv3} holds.

A lower bound for $\rho_n$ is given by \eqref{rho>}, and it remains only to
show a matching upper bound.
By \eqref{sar}, there exists a constant $C$ such that
$\E(X_n\land (\rho_n X_n^2))<C\eps_n$.
Let $\gb_n:=C\eps_n/\E X_n^2$. Then $\gb_n\gD_n=C\eps_n\gD_n/\E X_n^2=o(1)$
by assumption, so for large $n$ we have $\gb_n\gD_n\le1$ and then
$\gb_nX_n\le1$ \as{} so $X_n\land (\gb_nX_n^2)=\gb_n X_n^2$ \as{} and
\begin{equation}
  \E\bigpar{X_n\land (\gb_n X_n^2)}=\gb_n\E X_n^2=C\eps_n
>\E\bigpar{X_n\land (\rho_n X_n^2)}.
\end{equation}
Hence, $\rho_n<\gb_n$ for large $n$, and thus
$\rho_n=O(\gb_n)=O(\eps_n/\E X_n^2)$.
\end{proof}

\begin{remark}\label{RX=1}
  The assumption $\liminf \P(X_n\neq1)>0$ is essential: if $X_n\dto X=1$,
  almost anything can happen.
For a simple example, let $X_n\in\set{0,1,2}$ with $\P(X_n=0)=q_n$,
$\P(X_n=2)=p_n$ and $\P(X_n=1)=1-p_n-q_n$ where $p_n>q_n>0$ and $p_n\to0$.
Then $\eps_n=p_n-q_n$ and, by \eqref{e:rhon} and  a simple calculation
(we have equality in \eqref{parix} and thus in \eqref{rho>}),
$\rho_n=1-q_n/p_n=\eps_n/p_n$.
Thus \eqref{rhoO} fails. Moreover, $\rho_n=1-q_n/p_n$
may converge to any number in $[0,1]$, or may oscillate.
(See also the examples in \cite{Hoppe92}.)
\end{remark}

\begin{remark}\label{Rbadrho}
If $\E X_n^2\to\infty$ but $X_n\dto X$ with $\E X^2<\infty$,
it is not necessarily the case that \eqref{e:tsurv2} holds, but it is still possible;
see Examples \ref{E3a} and \ref{E3b} below. Similarly, if $\E X_n^2\to C<\infty$ and $X_n\dto X$ but $\E X^2<C$,
then \eqref{e:tsurv1} may or may not hold; see Examples \ref{E3a} and \ref{E3c}.
\end{remark}

We consider several examples illustrating various possible behaviours.
See also the  examples by \citet{Hoppe92}.

\begin{example}[Power laws]\label{Ebeta}
  Let $1<\gb<2$ and assume that for some constants $C,c>0$,
  \begin{align}
	\P(X_n> x)&\le C x^{-\gb}, \qquad x>0, \label{ebeta1}
\\
	\P(X_n> x)&\ge c x^{-\gb}, \qquad 1\le x < \eps_n^{-1/(\gb-1)}.
\label{ebeta2}
  \end{align}
Here, due to the size-biasing in \eqref{size-biased-degree}, $\beta$ is related to $\gamma$ in Example \ref{exam-power-law} by $\beta=\gamma-1$.
Then, by an integration by parts (or an equivalent Fubini argument),
for any $r>0$,
	\begin{equation}
  	\begin{split}
	\E\bigpar{X_n\land (r X_n^2)}
	&=
	\int_0^{1/r} 2rx\P(X_n>x)\dd x + \int_{1/r}^\infty \P(X_n>x)\dd x	
	\\&
	\le 2Cr \int_0^{1/r} x^{1-\gb}\dd x + C\int_{1/r}^\infty x^{-\gb}\dd x
	\\&
	=\Bigpar{\frac{2}{2-\gb}+\frac{1}{\gb-1}}Cr^{\gb-1}.
  	\end{split}
	\end{equation}
Taking $r=\rho_n$, this and \eqref{sar} yield
	\begin{equation}\label{garbo}
	\eps_n=O(\rho_n^{\gb-1}).
	\end{equation}
On the other hand, taking $r=A\eps_n^{1/(\gb-1)}$ for a (large) constant
$A>1$, and assuming that $n$ is so large that $r<1$, by \eqref{ebeta2},
\begin{equation}
  \begin{split}
\E\bigpar{X_n\land (r X_n^2)}
\ge \frac{1}r \P\Bigpar{X_n\ge \frac{1}r}	
\ge c r^{\gb-1}
= c A^{\gb-1}\eps_n.
  \end{split}
\end{equation}
Choosing $A$ sufficiently large, this and \eqref{sar} yield (for large $n$)
	\begin{equation}
  	\E\bigpar{X_n\land (r X_n^2)}
	>
	\E\bigpar{X_n\land (\rho_n X_n^2)}
	\end{equation}
and thus $r>\rho_n$. Consequently, $\rho_n=O(\eps_n^{1/(\gb-1)})$, which
together with \eqref{garbo} yield
	\begin{equation}\label{charlie}
	\rho_n\asymp \eps_n^{1/(\beta-1)}.
	\end{equation}

This example shows that $\rho_n$ may decrease as an arbitrarily large power of
$\eps_n$. (Choose $\beta$ close to 1.)
\end{example}

\begin{example}
For an instance of \refE{Ebeta},
let $1<\beta<2$, and let $X$ be a non-negative integer-valued random
  variable with $\E X=1$ and $\P(X>x)\asymp x^{-\beta}$ as $x\to\infty$.
Fix a sequence $\eps_n\to0$ (with $\eps_n>0$)
and a sequence $M_n$ of integers with
$M_n\ge \eps_n^{-1/(\beta-1)}$. Let $X_n':=X\land M_n$, and define $X_n$ by
\begin{equation}\label{extrunc}
  \P(X_n=k)=
  \begin{cases}
	\P(X'_n=0)-\gd_n, & k=0,\\
	\P(X'_n=1)+\gd_n, & k=1,\\
	\P(X'_n=k), & k\ge2,
  \end{cases}
\end{equation}
where $\gd_n:=\eps_n+\E(X-X_n')$.
Then $\E X_n=\E X_n'+\gd_n=1+\eps_n$ as required.
Note that $\E(X-X_n')\asymp
M_n^{-(\beta-1)}=O(\eps_n)$, so $\gd_n\asymp\eps_n$; in particular $\gd_n\to0$
and the definition \eqref{extrunc} is valid at least for large $n$ (since
$\P(X=0)>0$ by $\E X=1$). Clearly, $X_n\dto X$.

Furthermore, \eqref{ebeta1}--\eqref{ebeta2} hold, and thus \eqref{charlie}
holds.

Moreover, we may choose $M_n$ arbitrarily large, and thus
$\E X_n^2\asymp M_n^{2-\beta}$ can be made arbitrarily large; this shows that
there is no formula similar to \eqref{e:tsurv1} giving $\rho_n$, even
within a constant factor, in terms of $\eps_n$ and $\E X_n^2$ (or $\E X_n(X_n-1)$).

We may also take $M_n=\infty$; then \eqref{charlie} still holds
and $\E X_n^2=\infty$.
\end{example}

\begin{example}\label{E3}
Choose $\eps_n\in(0,1]$ with $\eps_n\to0$ and $p_n\in(0,1/n]$
  with $np_n\to0$ and define (for $n\ge3$) $X_n$ by
  \begin{equation}
	\P(X_n=k) =
	\begin{cases}
\frac{1-\eps_n+(n-2)p_n}{2}, & k=0,	  \\
\frac{1+\eps_n-np_n}{2}, & k=2,	  \\
p_n, & k=n.
	\end{cases}
  \end{equation}
Then $\E X_n=1+\eps_n$ as required, $X_n\dto X$ with
$\P(X=0)=\P(X=2)=\frac12$, and thus $\E X=1$, $\E X^2=2$ and $\E X(X-1)=1$,
and
\begin{equation}\label{paris}
  \E X_n^2 = 2+n^2 p_n+o(1).
\end{equation}
In particular, $\E X_n^2\to \E X^2$ if and only if $n^2p_n\to0$.

Furthermore,
\begin{equation}
  \E \phii(\ga_nX_n) = \frac{1+o(1)}2 \phii(2\ga_n)+p_n\phii(n\ga_n)
=\ga_n^2(1+o(1))+p_n\phii(n\ga_n),
\end{equation}
and thus \eqref{gan1} implies
\begin{equation}\label{hebdo}
  \ga_n\eps_n = \ga_n^2\bigpar{\tfrac12+o(1)}+p_n\phii(n\ga_n).
\end{equation}
\end{example}

We consider several cases of this in the following examples.

\begin{example}
  \label{E3a}
Choose $\eps_n$ and $p_n$ in \refE{E3} such that
$np_n=o(\eps_n)$. Then
$p_n\phii(n\ga_n)=O(p_nn\ga_n)=o(\eps_n\ga_n)$, and thus \eqref{hebdo} yields
$\ga_n\eps_n\sim\tfrac12\ga_n^2$ and thus
\begin{equation}\label{bonn}
\rho_n\sim  \ga_n \sim 2\eps_n,
\end{equation}
just as given by \eqref{e:tsurv1}. This includes cases with $n^2p_n\to0$,
when \refT{Tsurv}\ref{tsurv1} applies by \eqref{paris}, but also cases with
$n^2p_n\to\infty$, when $\E X_n^2\to\infty$ by \eqref{paris}.
(For example, take $\eps_n=n^{-1/4}$ and $p_n=n^{-3/2}$.)

If we instead take $p_n=n\qww$ and $\eps_n=n\qqw$, then $\E X_n^2\to 3>\E
X^2$ by \eqref{paris}, while \eqref{e:tsurv1} nevertheless holds by
\eqref{bonn}.
\end{example}

\begin{example}\label{E3b}
Choose $\eps_n \le n^{-1}$ in \refE{E3},
so $\rho_n=O(n^{-1})$ by \eqref{rhoO}.
Then $\rho_n X_n =O(1)$, so \eqref{sar} yields
\begin{equation}\label{em}
  \eps_n \asymp \E(\rho_n X_n^2) =\rho_n \E X_n^2.
\end{equation}
If we further choose $p_n$ with $np_n\to0$ and $n^2p_n\to\infty$, then $\E
X_n^2\to\infty$ by \eqref{paris}, and thus
$\rho_n=o(\eps_n)$ by \eqref{em}.
(For example, take $\eps_n=n^{-1}$ and $p_n=n^{-3/2}$.)
\end{example}

\begin{example}
  \label{E3c}
Choose $\eps_n=n\qw$ and $p_n=An\qww$ in \refE{E3}, for some constant $A>0$.
Thus $\E X_n^2\to 2+A$ by \eqref{paris}, and
$\E[ X_n(X_n-1)]\to 1+A$.
In particular, $\E X_n^2=O(1)$ and thus
\refT{Tsurv}\ref{tsurv0} yields $\ga_n\sim\rho_n\asymp n^{-1}$.
More precisely, \eqref{hebdo} yields, after multiplication by $n^2$,
\begin{equation}\label{eleonora}
  n\ga_n\sim \tfrac12(n\ga_n)^2+A\phii(n\ga_n).
\end{equation}
As just said, $n\ga_n=\ga_n/\eps_n$ is bounded above and below, and
\eqref{eleonora} shows that, if $n\ga_n\to a$ along some subsequence, then
$a=\frac12a^2+A\phii(a)$, or
\begin{equation}\label{winston}
  \frac{a-\frac12a^2}{\phii(a)}=A.
\end{equation}
Hence $0<a<2$. Furthermore, it is easy to see (by differentiating) that
$\phii(a)/(a-\frac12a^2)$ is strictly increasing on $(0,2)$. Hence
\eqref{winston} has a unique solution $a=a(A)\in(0,2)$ for any $A>0$,
and thus $n\ga_n\to a(A)$. Consequently, also $\rho_n/\eps_n=n\rho_n\to
a(A)$, given by \eqref{winston}.

It is easily verified that $2>a(A)>2/(1+A)$. Hence, \eqref{e:tsurv1} does
not hold, and neither does \eqref{e:tsurv1} with $\E[X(X-1)]$ replaced by
$\E[X_n(X_n-1)]$.
\end{example}

\section{Further preliminaries}\label{StD}

\subsection{More on $\rho_n$ and $\ga_n$ in the barely supercritical case}
\label{SSrho}
Suppose that \ref{AA}--\ref{AO} are satisfied,  and furthermore $\vep_n>0$.
(Note that the assumptions of Theorems~\ref{T1} and \ref{TD3}--\ref{TDx}
imply that $\eps_n>0$,
except possibly for some small $n$ that we may ignore.)

In what follows, $\rho_n$ will denote the survival probability of a
Galton--Watson process
with offspring distribution $\tD_n$, see \refS{SSsize-biased} and \eqref{rhon}.
As in \refS{Ssurv}, it will often be convenient to work with
\begin{equation}
  \label{gan}
\ga_n:=-\log(1-\rho_n).
\end{equation}

\begin{lemma}
  \label{Lrho}
If \ref{AA}--\ref{AO} are satisfied  and $\vep_n>0$, then
$\rho_n\to0$,
\begin{equation}\label{garho2}
\ga_n\sim\rho_n
\end{equation}
and
	\begin{equation}\label{sard}
  	\vep_n
	\asymp \E\bigpar{D_n^2\land(\rho_nD_n^3)}	
	\asymp \E\bigpar{D_n^2\land(\ga_nD_n^3)}	.
	\end{equation}
\end{lemma}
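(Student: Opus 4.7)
The plan is to deduce \refL{Lrho} from \refT{Tsurv} applied to the offspring sequence $X_n = \tD_n$. First I would verify the three hypotheses of \refT{Tsurv}: by \eqref{EtD}, $\E \tD_n = \nu_n = 1 + \eps_n$; the assumption $\eps_n > 0$ together with \ref{AO} gives $\eps_n > 0$ with $\eps_n \to 0$; and $\liminf_\ntoo \P(\tD_n \neq 1) > 0$ follows from \ref{AD>2} together with $\tD_n \dto \tD$, as noted in \refSS{SSsize-biased}. \refT{Tsurv} then immediately gives $\rho_n \to 0$, the bound $\rho_n = O(\eps_n)$ from \eqref{rhoO}, and the asymptotic $\ga_n \sim \rho_n$ from \eqref{garho}, which is precisely \eqref{garho2}.

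The main remaining work is to convert \eqref{sar}, which in our case reads $\eps_n \asymp \E\bigpar{\tD_n \land \rho_n \tD_n^2}$, into the $D_n$-based claim \eqref{sard}. Applying the size-biasing identity \eqref{ftD} with $f(y) = y \land \rho_n y^2$, and using $\E D_n = \mu_n \to \mu \in (0,\infty)$ (so $\mu_n$ is bounded above and below), I would obtain
\[
\eps_n \asymp \E\bigsqpar{D_n \bigpar{(D_n-1) \land \rho_n(D_n-1)^2}}.
\]
What remains is the elementary comparison
\[
\E\bigsqpar{D_n \bigpar{(D_n-1) \land \rho_n(D_n-1)^2}} \asymp \E\bigpar{D_n^2 \land \rho_n D_n^3}.
\]

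On $\set{D_n \ge 2}$ one has $D_n - 1 \asymp D_n$, so the two integrands agree pointwise up to absolute constants. On $\set{D_n = 1}$ the left integrand vanishes whereas the right equals $1 \land \rho_n$, which is simply $\rho_n$ for all sufficiently large $n$. The excess contribution from $\set{D_n = 1}$ is therefore at most $\rho_n \P(D_n = 1) \le \rho_n = O(\eps_n)$ by \eqref{rhoO}, and is absorbed into the $\eps_n$ order. This yields the middle $\asymp$ in \eqref{sard}; the final $\asymp$ in \eqref{sard} is immediate from \eqref{garho2}, since then $\ga_n \asymp \rho_n$ for all large $n$.

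The only delicate point in the argument is the $\set{D_n = 1}$ boundary case in the comparison of the two minima, which crucially uses the quantitative bound $\rho_n = O(\eps_n)$ from \refT{Tsurv} rather than merely $\rho_n \to 0$. Everything else is a mechanical application of \refT{Tsurv} and of the size-biasing identity.
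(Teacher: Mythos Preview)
Your proposal is correct and follows essentially the same route as the paper: verify the hypotheses of \refT{Tsurv} for $X_n=\tD_n$, read off $\rho_n\to0$, $\rho_n=O(\eps_n)$ and $\ga_n\sim\rho_n$, and then convert \eqref{sar} to the $D_n$ form via size-biasing, absorbing the boundary case using $\rho_n=O(\eps_n)$. The only cosmetic difference is that the paper passes through $\xD_n$ (using \eqref{fxD}) rather than applying \eqref{ftD} directly, so its boundary case is $\xD_n=1$ instead of your $D_n=1$; the two computations are equivalent.
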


\begin{proof}
\refT{Tsurv} applies to $X_n=\tD_n$, with $X=\tD$ and with
$\eps_n$ as in \refS{sec-model} by \eqref{EtD}.
In particular, $\rho_n=O(\eps_n)\to0$ and thus,
by \eqref{gan}, $\ga_n\sim\rho_n$.
Furtermore,
by \eqref{sar},
	\begin{equation}\label{sardrho1}
  	\vep_n \asymp \E\bigpar{\tD_n\land(\rho_n\tD_n^2)}
	\le \E\bigpar{\xD_n\land(\rho_n(\xD_n)^2)}.
	\end{equation}
Moreover, if $\xD_n>1$ then $\xD_n\le 2\tD_n$. Thus, using \eqref{rhoO}
and \eqref{sar},
	\begin{equation}\label{sardrho2}
  	\begin{split}
	\E\bigpar{\xD_n\land(\rho_n(\xD_n)^2)}
	\le \rho_n+4 \E\bigpar{\tD_n\land(\rho_n\tD_n^2)}
	=O(\eps_n).
  	\end{split}
	\end{equation}
Combining \eqref{sardrho1}--\eqref{sardrho2} and using \eqref{fxD}, we find
	\begin{equation}\label{sardrho}
 	 \begin{split}
  	\vep_n &
	\asymp \E\bigpar{\xD_n\land(\rho_n(\xD_n)^2)}
	=\frac{1}{\E D_n} \E\bigpar{D_n(D_n\land(\rho_nD_n^2))}
	\asymp \E\bigpar{D_n^2\land(\rho_nD_n^3)},
  	\end{split}
	\end{equation}
proving the first part of \eqref{sard}; the second follows from
$\ga_n\sim\rho_n$.
\end{proof}

Note also that
\eqref{rhon} implies, by \eqref{fxD},
\begin{equation}\label{frigg}
  (1-\rho_n)^2 =  \E (1-\rho_n)^{\xD_n}
=\frac{\E \bigpar{D_n(1-\rho_n)^{D_n}}}{\E D_n}
=\frac{\E \bigpar{D_n(1-\rho_n)^{D_n}}}{\mu_n}
,
\end{equation}
which 
can be rewritten as
	\begin{equation}\label{frej}
  	\E\bigpar{D_n\e^{-\ga_nD_n}}=\mu_n\e^{-2\ga_n}.
	\end{equation}

In the case $\E D_n^3\to \E D^3<\infty$,
\ie{}, when $D^3_n$ are uniformly integrable,
we have  $\E \tD_n^2\to\E\tD^2$ by \eqref{ftD};
hence \eqref{e:tsurv1} applies and yields,
using \eqref{ftD} again and the notation \eqref{kk},
where now $\kk>0$ by \eqref{e:tsurv1} or \refR{Rneq2},
	\begin{equation}\label{ganD3}
  	\ga_n\sim \rho_n \sim \frac{2\eps_n}{\E (\tD(\tD-1))}
  	= \frac{2\eps_n \mu}{\E (D(D-1)(D-2))}
  	= \frac{2\eps_n}{\kk}.
	\end{equation}

\subsection{The  Skorohod coupling theorem}\label{SSSkorohod}
We assume in \ref{AA} that $D_n\dto D$.
By the Skorohod coupling theorem \cite[Theorem 4.30]{Kallenberg},
we may without loss of generality assume the stronger
$D_n\asto D$; this will be convenient (although not really necessary)
in some proofs.
(We have already used
the Skorohod coupling theorem in a similar way for $X_n$ in \refS{Ssurv},
and will use it for a third set of variables in the proof of \refL{L0}.)

\subsection{A semimartingale inequality}
Our proofs below will use a semimartingale inequality to
control the deviations of various random processes.

We say that a stochastic process $X(t)$, defined on an interval $[0,T]$, is
a \emph{semimartingale with drift $\xi(t)$} (with respect to a filtration
$(\cF_t)$) if $X(t)$ is adapted and 
	\begin{equation}\label{drift}
  	X(t)=M(t)+\int_0^t \xi(u)\dd u,
	\end{equation}
for some martingale $M(t)$.
It is proved in \cite[Lemma 2.2]{Jans94} that, if $X(t)$ is a bounded
semimartingale with drift $\xi(t)$, then
	\begin{equation}\label{memoirs}
  	\begin{split}
  	\E \sup_{s\le t\le u} |X(t)|^2
	&\le 13 \E |X(u)|^2 + 13 \Bigpar{\int_s^u \sqrt{\E \xi(t)^2}\dd t}^2
	\\
	&\le 13 \E |X(u)|^2 + 13 (u-s)\int_s^u \E\bigsqpar{\xi(t)^2}\dd t.
  	\end{split}
	\end{equation}
We will be using the following modification of \eqref{memoirs}.
\begin{lemma}\label{LX}
Let $X(t)$ be a semimartingale with drift $\xi(t)$, defined on $[0,u]$.
Then
\begin{equation}\label{lx}
  \E \sup_{0\le t\le u} |X(t)|^2
\le 13 \sum_{j=0}^\infty \E |X(2^{-j}u)|^2
+ 13 \int_0^u t\E\bigsqpar{\xi(t)^2}\dd t.
\end{equation}
\end{lemma}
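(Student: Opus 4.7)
The plan is to reduce \eqref{lx} to the basic semimartingale inequality \eqref{memoirs} by a dyadic decomposition of the interval $(0,u]$. Set $u_j := 2^{-j}u$ and $I_j := [u_{j+1},u_j]$ for $j\ge 0$. Since $(0,u]=\bigcup_{j\ge 0}I_j$, and since the supremum of $|X|^2$ over a union equals the maximum of the suprema over the pieces and is therefore bounded by their sum,
\[
\sup_{0<t\le u}|X(t)|^2 \;\le\; \sum_{j=0}^\infty \sup_{t\in I_j}|X(t)|^2.
\]

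Next, on each $I_j$ the restriction of $X$ is still a semimartingale with drift $\xi$ (restricted), so \eqref{memoirs} applied with $s=u_{j+1}$ and with the role of $u$ played by $u_j$ gives
\[
\E \sup_{t\in I_j}|X(t)|^2 \;\le\; 13\,\E|X(u_j)|^2 + 13\,(u_j-u_{j+1})\int_{u_{j+1}}^{u_j}\E\bigsqpar{\xi(t)^2}\dd t.
\]
The key identity is $u_j-u_{j+1}=u_{j+1}$, so that for every $t\in I_j$ one has $u_j-u_{j+1}=u_{j+1}\le t$; hence
\[
(u_j-u_{j+1})\int_{u_{j+1}}^{u_j}\E\bigsqpar{\xi(t)^2}\dd t \;\le\; \int_{u_{j+1}}^{u_j} t\,\E\bigsqpar{\xi(t)^2}\dd t.
\]

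Summing over $j\ge 0$, the integrals telescope to $\int_0^u t\,\E\bigsqpar{\xi(t)^2}\dd t$; combined with the first display and taking expectations, this yields the desired bound on $\E\sup_{0<t\le u}|X(t)|^2$. To upgrade the supremum to include $t=0$, one uses right-continuity of the semimartingale: $\lim_{t\to 0^+}X(t)=X(0)$, so $|X(0)|\le\sup_{0<t\le u}|X(t)|$ and the sup over $[0,u]$ coincides with that over $(0,u]$. There is no substantive obstacle; the whole content of the argument is the observation that geometric scaling makes $u_j-u_{j+1}$ comparable to $t$ on $I_j$, which lets us trade the factor $(u_j-u_{j+1})$ in \eqref{memoirs} for the variable $t$ inside the integral, producing exactly the $t\,\E[\xi(t)^2]$ weighting demanded by \eqref{lx}.
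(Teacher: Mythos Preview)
Your proof is correct and follows essentially the same approach as the paper: a dyadic decomposition $u_j=2^{-j}u$, application of \eqref{memoirs} on each $[u_{j+1},u_j]$, the observation $u_j-u_{j+1}=u_{j+1}\le t$ on that interval, and summation, with right-continuity handling $t=0$. The only cosmetic difference is that the paper invokes right-continuity at the outset while you do so at the end.
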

\begin{proof}
Let $u_j:=2^{-j}u$.
We have
\begin{equation}\label{lx1}
  \sup_{0\le t\le u} |X(t)|^2
\le \sum_{j=0}^\infty \sup_{u_{j+1}\le t\le u_j} |X(t)|^2,
\end{equation}
since $X(t)$ is a.s.\ right-continuous at 0 (and everywhere) by \eqref{drift}.
We take the expectation, and note that by \eqref{memoirs},
\begin{equation}\label{lx2}
  \begin{split}
  \E \sup_{u_{j+1}\le t\le u_j} |X(t)|^2
&\le
13 \E |X(u_j)|^2
+ 13 (u_j-u_{j+1})\int_{u_{j+1}}^{u_j} \E\bigsqpar{\xi(t)^2}\dd t
\\&
\le
13 \E |X(u_j)|^2
+ 13 \int_{u_{j+1}}^{u_j} t \E\bigsqpar{\xi(t)^2}\dd t.	
  \end{split}
\end{equation}
The result follows by \eqref{lx1} and \eqref{lx2}.
\end{proof}

Inequality~\eqref{lx} will yield better estimates than inequality~\eqref{memoirs} in cases when process $(X(t))$ takes relatively small values near time $0$ (so that $\sum_{j=0}^\infty \E |X(2^{-j}u)|^2$ is finite and not too large) but has quite significant drift (so that $\int_0^u t\E\bigsqpar{\xi(t)^2}\dd t$ is significantly smaller than $u\int_0^u \E\bigsqpar{\xi(t)^2}\dd t$).

\section{The supercritical case}\label{Ssuper}
As explained in \refSS{SSresults}, it suffices to prove Theorems
\ref{T1} and \ref{TD3}--\ref{TDx} for the multigraph $G_n^*$, since the simple graph case
follows
by conditioning on simplicity. We thus consider the random multigraph $G_n^*:=G^*(n,(d_i)_1^n)$
constructed by the configuration model.

\subsection{A more general theorem}\label{SSgeneral}
We follow the structure of proof in \cite{JanLuc07}.
We explore the clusters of the multigraph given by the configuration model
one by one, using the cluster exploration
strategy introduced in \cite[Section 4]{JanLuc07}. We regard each edge as
consisting
of two half-edges, each half-edge having one endpoint.
We label the vertices as
\emph{sleeping} or \emph{awake}, and the half-edges as \emph{sleeping},
\emph{active} or \emph{dead}. The sleeping and active half-edges are called
\emph{living} half-edges. (During the exploration, the endpoint of a
sleeping half-edge is sleeping, while the endpoint of an active or dead
half-edge is awake.)

We start with all vertices and half-edges sleeping. We pick a vertex,
make it awake and label its half-edges as active. We then take
any active half-edge, say $x$, and find its partner half-edge $y$ in the
graph; we label these
two half-edges as dead; further, if the endpoint of $y$ is sleeping, we
label it awake and
all the other half-edges at this vertex active. We repeat the above steps as
long as there is an
active half-edge available. When there is no active half-edge left, then we
have obtained the first component. We then pick another vertex, and reveal
its component, and so on, until all the components have been found.

We apply this procedure to $G^*_n$, revealing its edges during
the process. This means that, initially, we only observe the vertex degrees
and the half-edges,
but not how they are joined into edges. Hence, each time we need a partner
of an edge, it is uniformly distributed over all living half-edges, and the dead
half-edges correspond to the half-edges that have already been paired.
We choose our pairings by giving the half-edges
i.i.d.\ random maximal lifetimes with distribution $\mbox{Exp}(1)$.
In other words, each half-edge dies spontaneously at rate 1 (unless
killed earlier). Each time we need to find the partner of a half-edge $x$, we then wait until
the next living half-edge $\neq x$ dies, and take that one.
This gives the following algorithm for simultaneously constructing and exploring the components
of $G^*(n,(d_i)_1^n)$:

\begin{itemize}
\item[\sC1]
Select a
sleeping vertex and declare it awake and all of its half-edges active.
To be precise,
we choose the vertex by choosing a half-edge uniformly at random among all
sleeping
half-edges. The process stops when there is no sleeping half-edge left; the remaining
sleeping vertices are all isolated and we have explored all other components.

\item[\sC2] Pick an active half-edge (which one does not matter) and kill it, i.e., change its status to dead.

\item[\sC3]
Wait until the next half-edge dies (spontaneously). This half-edge is
paired to the one killed in the previous step \sC2 to form an edge of the
graph. If
the vertex it belongs to is sleeping, then we declare this vertex awake and
all of its other half-edges active.
Repeat from \sC2 if there is any active half-edge; otherwise from \sC1.
\end{itemize}

The components are created between the successive times \sC1 is performed:
the vertices in the component
created between two successive
such times are the vertices awakened during the corresponding interval.

We let $S_n(t)$ and $A_n(t)$ be the numbers of sleeping and active
half-edges, respectively,
at time $t\geq 0$, and let $L_n(t)=S_n(t)+A_n(t)$ denote the number of living half-edges.
Further, we let $V_{n,k}(t)$ denote the number of sleeping vertices of
degree $k$ at time $t$,
and let $V_n(t)$ be the number of sleeping vertices at time $t$;
thus
\begin{align}\label{VS}
 V_n(t)=\sum_{k=0}^{\infty} V_{n,k}(t),
 &&&
 S_n(t)=\sum_{k=0}^{\infty} kV_{n,k}(t).
\end{align}

Let $T_1<T_2$ be random times when $\sC1$ are performed.
Then the exploration starts on new components at times $T_1$ and $T_2$,
and the components
found between  $T_1$ and $T_2$ in total have
$V_n(T_1-)-V_n(T_2-)$ vertices and
$S_n(T_1-)-S_n(T_2-)$ half-edges, and hence $[S_n(T_1-)-S_n(T_2-)]/2$ edges.
Note also that $A_n(t-)=0$ when $\sC1$ is performed, and $A_n(t)\ge0$ for every
$t$.

We also introduce variants of $(S_n(t), A_n(t), V_n(t))_{t\geq 0}$ obtained
by ignoring the effect of \sC1.
Let $\widetilde V_{n,k}(t)$ denote the number of vertices of degree $k$ such that
all of their $k$ half-edges have their exponential maximal life times
greater than $t$. Then
$\widetilde V_{n,k}(t)$ has a $\Bin(n_k, \mathrm{e}^{-kt})$ distribution,
and the $\{\widetilde V_{n,k}(t)\}_{k=1}^{\infty}$ are independent random
variables
for any fixed $t$.
Let
\begin{align}\label{m22}
 \tV_n(t):=\sum_{k=0}^{\infty} \tV_{n,k}(t),
 &&&
 \tS_n(t):=\sum_{k=0}^{\infty} k\tV_{n,k}(t),
\end{align}
and
\begin{align}\label{m33}
 \tA_n(t):=L_n(t)-\tS_n(t)
=A_n(t)-(\tS_n(t)-S_n(t)).
\end{align}
It is obvious that $\tS_n(t)\ge S_n(t)$;
moreover,  $\tS_n(t)- S_n(t)$ increases only when \sC1 is performed,
and it is not difficult to show
that,
see \JL{Lemma 5.3 and (5.7)},  
\begin{align}\label{jlss}
  0\le \tS_n(t)-S_n(t)
=A_n(t)-\tA_n(t)
< -\inf_{s\le t} \tA_n(s) + \Delta_n,
\end{align}
where, as before, $\Delta_n := \max_{1 \le i \le n} d_i$ is the maximum vertex degree.

In order to explain the argument used to prove \refT{T1} more clearly,
and to
explain the connections to the previous versions of this argument used in
\cite{JanLuc07}, we give the argument in a general form  (that includes the
two versions in \cite{JanLuc07}), using certain parameters and functions,
$\tau,\ga_n,\gam_n,\hg(t),\hh(t), \psi_n(t)$.
We assume  that these satisfy conditions
\ref{Xfirst}--\ref{Xlast} below, and then prove a general result, \refT{TX}.
The choices of these parameters and functions used in the proofs of
\cite[Theorems 2.3 and 2.4]{JanLuc07} are described in Remarks \ref{R07T2.3}
and \ref{R07T2.4}. In order to prove \refT{T1}, we instead make
the choices in \eqref{svatau}--\eqref{svapsi} below.
(The reader who only wants a proof of \refT{T1} can thus assume these
choices throughout.)
We verify in \refSS{SSspecial} that the choices in
\eqref{svatau}--\eqref{svapsi}
actually satisfy the assumptions \ref{Xfirst}--\ref{Xlast}.

Assumptions \ref{Xfirst}--\ref{Xlast} are as follows.
\begin{Benumerate}
  \item \label{Xtau} \label{Xfirst}
$\tau>0$ is fixed.
  \item \label{Xag}
$(\ga_n)$ and $(\gam_n)$ are sequences of positive numbers such that
	$\gamn=O(\gan)$.
\item \label{Xgh}
$\hg, \hh\colon [0,\infty)\to\bbR$ are continuous functions;
$\hg$ is strictly positive on $(0,\infty)$ and
$\hh$ is strictly increasing on $(0,\infty)$.

\item \label{Xpsi}
\xdef\QXpsi{\arabic{enumi}}
$(\psi_n)$ is a sequence of continuous functions on $[0,2\tau]$ such that:
  \begin{enumerate}
\renewcommand{\labelenumii}{(\alph{enumii})}%
\renewcommand{\theenumii}{\labelenumii}%
\item \label{Xpsi0}
  $\psi_n(0)=0$;
\item \label{Xpsitau}
  $\psi_n(\tau)=o(1)$;
\item \label{Xpsitau'}
  for some $\tau'>0$, $\psi_n(t)\ge 0$ on $[0,\tau']$;
\item \label{Xpsiab}
 for any compact interval $[a,b]\subset(0,\tau)$,
 $\liminf_\ntoo\inf_{a\le t\le b}\psi_n(t)>0$;
\item \label{Xpsi-}
  for every $t>\tau$, $\limsup_\ntoo\psi_n(t)<0$;
\item \label{Xpsiequi}
 $(\psi_n)$ is equicontinuous at  $\tau$, i.e.,
 if $t_n\to\tau$, then $\psi_n(t_n)\to0$.
 \end{enumerate}

\item \label{XA}
  \begin{equation*}
\sup_{t\le2\taux}\lrabs{\frac1{n\gam_n}\tA_n(\gant)-\psi_n(t)}\pto0.	
  \end{equation*}

\item \label{XV}
$$
\sup_{t\le2\taux}\lrabs{\frac1{n\ga_n}
\bigpar{\tV_n(0)-\tV_n(\gant)}-\hg(t)}
\pto0;
$$
\item \label{XS}
$$\sup_{t\le2\taux}\lrabs{\frac1{n\ga_n}
 \bigpar{\tS_n(0)-\tS_n(\gant)}-\hh(t)}\pto0;$$

\item \label{Xdmax}\label{Xlast}
$$
\frac{\dmax_n}{n\gam_n}\to0.
$$
\end{Benumerate}
Note that \ref{XV} and \ref{XS} imply that necessarily $\hg(0)=\hh(0)=0$.

\begin{remark}
\label{Rpsin1}
In the case when $\psi_n=\psi$ does not depend on $n$,
\ref{Xpsi} says simply that $\psi$ is continuous with
$\psi(0)=\psi(\tau)=0$,
$\psi>0$ on $(0,\tau)$ and $\psi<0$ on $(\tau,2\tau)$.
In general, \ref{Xpsi} should be interpreted as an asymptotic version of this.
In particular, for any $\eps>0$ with $\eps<\tau$, we have
$\psi_n(\tau-\eps)>0$ and $\psi_n(\tau+\eps)<0$ for all large $n$;
it follows that, at least for large $n$, $\psi_n$ has a zero $t_n>0$ such
that $t_n\to\tau$. Furthermore, every zero of $\psi_n$ is $o(1)$,
$\tau+o(1)$ or $\ge\tau$.
\end{remark}

\begin{remark}\label{Rpsin2}
If,
at least for all large $n$,
$\psi_n$ is concave on $[0,2\tau]$
(which is the case in our main application),
then  \ref{Xpsi} can be replaced by the simpler
\begin{Benumerate}
  \renewcommand{\labelenumi}{\textup{(B\QXpsi${}'$)}}%
\renewcommand{\theenumi}{\labelenumi}%
  \item
\label{Xpsi'}
$\psi_n$ is continuous and concave on $[0,2\tau]$ and such that $\psi_n(0)=0$,
	$\psi_n(\tau)=o(1)$,
$\psi_n(2\tau)=O(1)$ and
$\liminf_\ntoo \psi_n(\tau/2)>0$;
  \end{Benumerate}
in fact, \ref{Xpsi'} is easily seen to imply \ref{Xpsi} (with, e.g.,
$\tau'=\tau/2$), at least for large $n$, which suffices.
\end{remark}

We now state a general theorem concerning the largest and second largest component sizes under assumptions \ref{Xfirst}--\ref{Xlast}.
Recall that, for a component $\cC$, we write $v(\cC)$ and $e(\cC)$ to denote
the number of vertices and edges in the component, respectively.
(In \refL{LC+} we extend this notation to the case where $\cC$
is a union of several components.)

\begin{theorem}
\label{TX}
Under assumptions \ref{Xfirst}--\ref{Xlast},
\begin{align}
  v(\cC_1)
&=n\gan\hg(\taux)+\op(n\gan),
\label{lcv}
\\
e(\cC_1)
&=n\gan\hh(\taux)/2+\op(n\gan).
\label{lce}
\end{align}
Furthermore, $v(\cC_{2}),e(\cC_{2})=\op(n\gan)$.
\end{theorem}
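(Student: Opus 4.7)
The plan is to run the continuous-time cluster exploration \sC1--\sC3 of $G_n^*$, identify $\cC_1$ with the single long excursion of $A_n$ straddling time $\gan\tau$, and then read off $v(\cC_1)$ and $e(\cC_1)$ from the increments of $\tV_n$ and $\tS_n$ across that excursion, in the spirit of \cite{JanLuc07}.

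The first step is to transfer \ref{XA} to the actual active half-edge process. By \ref{Xpsitau'}, \ref{Xpsi-} and \ref{Xpsiequi}, the family $\psi_n$ is uniformly bounded below on $[0,2\tau]$, so by \ref{XA} the infimum $\inf_{s\le 2\gan\tau}\tA_n(s)$ is $\Op(n\gam_n)$; combined with \eqref{jlss} and \ref{Xdmax} one then deduces $\sup_{t\le 2\gan\tau}(\tS_n(t)-S_n(t))=\Op(n\gam_n)$, and refining the argument on sub-intervals where $\psi_n$ is essentially non-negative gives $\sup_{t\le 2\gan\tau}\bigabs{A_n(t)-(\tA_n(t))_+}=\op(n\gam_n)$. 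Fix a small $\eta\in(0,\tau)$. By \ref{Xpsiab} combined with the previous bound, $A_n(\gan t)\ge\tfrac12 c_\eta n\gam_n>0$ throughout $[\eta,\tau-\eta]$ w.h.p., so no \sC1 is performed in $[\gan\eta,\gan(\tau-\eta)]$ and every vertex awakened there belongs to one and the same component $\cC^*$. Let $T_n^-$ be the largest zero of $A_n$ in $[0,\gan\eta]$ and $T_n^+$ the smallest zero of $A_n$ in $[\gan(\tau-\eta),2\gan\tau]$, so that $[T_n^-,T_n^+]$ is the exploration interval of $\cC^*$.

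Next, localise the endpoints and compute sizes. Using \ref{Xpsi-} and \ref{Xpsiequi}, at any $t>\tau$ with $\psi_n(t)<-c<0$ one has $\tA_n(\gan t)\le -\tfrac12 cn\gam_n$, which combined with $A_n\ge 0$ and $\sup\bigabs{A_n-(\tA_n)_+}=\op(n\gam_n)$ forces $A_n$ to have hit zero already; letting $t\downarrow\tau$ (suitably with $n$) yields $T_n^+/\gan\pto\tau$. Analogously, \ref{Xpsi0} and continuity of $\hg$ force $T_n^-/\gan\pto 0$, since otherwise $\cC^*$ would awaken too few vertices to account for all the awakenings in $[\gan\eta,\gan(\tau-\eta)]$, which is $\asymp n\gan$ by \ref{XV}. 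Now $v(\cC^*)=V_n(T_n^--)-V_n(T_n^+)$; as in \cite{JanLuc07}, $\tV_n-V_n$ is shown to be $\op(n\gan)$ uniformly on $[0,2\gan\tau]$ using \ref{Xdmax} together with the bound on the number of excursions of $A_n$ provided by the concentration of $\tA_n$. Hence \ref{XV} combined with $\hg(0)=0$ and continuity of $\hg$ yields $v(\cC^*)=\tV_n(T_n^-)-\tV_n(T_n^+)+\op(n\gan)=n\gan\hg(\tau)+\op(n\gan)$, which is \eqref{lcv}. Decomposing $2e(\cC^*)=L_n(T_n^--)-L_n(T_n^+)=[\tS_n(T_n^-)-\tS_n(T_n^+)]+[\tA_n(T_n^-)-\tA_n(T_n^+)]$, the first bracket equals $n\gan\hh(\tau)+\op(n\gan)$ by \ref{XS} with $\hh(0)=0$, and the second is $\op(n\gam_n)=\op(n\gan)$ since $\tA_n(T_n^\pm)/(n\gam_n)\pto\psi_n(\tau)=o(1)$ and $\psi_n(0)=0$; this gives \eqref{lce}.

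Finally, for $\cC_2$, components finished before $T_n^-$ have total vertex count at most $\tV_n(0)-\tV_n(T_n^-)+\op(n\gan)=n\gan\hg(T_n^-/\gan)+\op(n\gan)=\op(n\gan)$ by continuity of $\hg$ and $T_n^-/\gan\pto 0$; the same argument via $\hh$ gives $\op(n\gan)$ edges. Components started after $T_n^+$ live in the subcritical regime $\psi_n<0$, and the drift argument used to localise $T_n^+$ shows that each individual excursion of $A_n$ past $T_n^+$ has duration $\op(\gan)$ and hence awakens $\op(n\gan)$ vertices. The main obstacle is obtaining this bound \emph{uniformly} over the potentially many subcritical excursions in $[T_n^+,2\gan\tau]$; this will require applying the semimartingale inequality \refL{LX} to the centred process $\tA_n-n\gam_n\psi_n(\cdot/\gan)$ on this window, exploiting the negative drift to rule out long excursions uniformly. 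Combining with the first-group bound yields $v(\cC_2)=\op(n\gan)$ and, identically, $e(\cC_2)=\op(n\gan)$, completing the proof.
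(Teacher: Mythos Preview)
Your treatment of the giant component $\cC^*$ is broadly in the spirit of the paper's argument, though the paper is cleaner: it defines $T_1$ as the last \sC1 time before $\gan\tau/2$ and $T_2$ as the next one, and reads off $T_1/\gan\pto0$, $T_2/\gan\pto\tau$ directly from \eqref{magn} and the growth of $\tS_n-S_n$ past $\gan\tau$. Your localisation of $T_n^-$ via ``otherwise $\cC^*$ would awaken too few vertices'' is unnecessary and not quite right; the fact that $A_n>0$ on $[\gan\eta,\gan(\tau-\eta)]$ already forces $T_n^-\le\gan\eta$.

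The serious gap is in the $\cC_2$ argument. Your claimed uniform bound $\sup_{t\le2\gan\tau}|A_n(t)-(\tA_n(t))_+|=\op(n\gam_n)$ does not follow from \eqref{jlss} in the generality of \ref{Xpsi}: once $t>\gan\tau$, the inequality \eqref{jlss} only gives $A_n(t)\le\tA_n(t)-\inf_{s\le t}\tA_n(s)+\Delta_n$, and unless $\psi_n$ is monotone decreasing past $\tau$ (not assumed in \ref{Xpsi}, only in \ref{Xpsi'}) the right-hand side can be $\Theta(n\gam_n)$. More fundamentally, even granting $A_n=\op(n\gam_n)$ uniformly past $T_n^+$, this does \emph{not} bound individual excursion lengths: an excursion of $A_n$ can have duration $\Theta(\gan)$ while $A_n$ stays small throughout (and it is the duration, via $\tS_n$, that controls the edge count of the corresponding component). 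Your proposed fix via \refL{LX} applied to $\tA_n-n\gam_n\psi_n(\cdot/\gan)$ gives moment bounds on the centred process, not uniform control over excursion lengths of $A_n$; and in any case $A_n\neq\tA_n$ in this region. Finally, you say nothing about components explored after time $2\gan\tau$.

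The paper takes a completely different route for $\cC_2$: it conditions on the realised multigraph and uses that components appear in size-biased order. If a second large component existed, it would with positive conditional probability be discovered within $o(n\gan)$ half-edges after $T_2$, at some $T_3$ with $T_3/\gan\pto\tau$; since $\tS_n-S_n$ does not increase on $[T_3,T_4]$ (no \sC1 there), \refL{L0} extends to $T_4$ and forces $T_4/\gan\pto\tau$, whence \refL{LC+} shows the component has $\op(n\gan)$ edges --- a contradiction. This argument is not sensitive to the shape of $\psi_n$ past $\tau$ and automatically covers components explored at any later time.
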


The proof  of Theorem \ref{TX}
follows \cite[Sections 5 and 6]{JanLuc07} with minor modifications,
omitting some details (and repeating others).
Before giving the details, we offer some intuition behind its statement.
Suppose that we are able to show (as we will later) that $(S_n(t), A_n(t),
V_n(t))_{t\geq 0}$
are close to $(\widetilde S_n(t), \widetilde A_n(t), \widetilde V_n(t))_{t\geq 0}$.
By \refR{Rpsin1}
and \ref{XA},
there is a large component whose exploration commences within time $\op(\alpha_n \taux)$ and ends
at time $\alpha_n \taux(1+\op(1))$; this turns out to be the largest
component. Moreover,
by \ref{XV}, the number of vertices in this component
is $n\ga_n\hg(\tau)(1+\op(1))$; and, by \ref{XS}, the number of half-edges is
close to $n\ga_n\hh(\tau)(1+\op(1))$.

\begin{remark}\label{R07T2.3}
We note that \JL{Theorem 2.3} is one
example of \refT{TX}, with
$\taux=-\ln\xi$,
$\gan=\gamn=1$,
$\psi_n(t)=\psi(t)=H(\eet)$,
$\hg(t)=1-g(\eet)$, $\hh(t)=h(1)-h(\eet)=\gl(1-{\mathrm e}^{-2t})+\psi(t)$;
in this case, \ref{XA}, \ref{XV}, \ref{XS}
are \JL{(5.6), (5.2), (5.3)}.
(Here, $\psi_n(t)$ is not always concave.)
\end{remark}

\begin{remark}\label{R07T2.4}
Similarly, \JL{Theorem 2.4} is another instance of \refT{TX},
now with $\gan=\E D_n(D_n-2)\to0$ as in \cite{JanLuc07},
$\gamn=\gan^2$, $\psi_n(t)=\psi(t)=t-\tfrac\gb2t^2$,
$\taux=2/\gb$, $\hg(t)=\gl t$,
$\hh(t)=2\gl t$; for \ref{XA}, \ref{XV},
\ref{XS}, see \JL{(6.7), Lemma 6.3 and the Taylor
expansions in the proof of Lemma 6.4}.
Note that \ref{Xdmax} holds, since $n^{2/3} \gamma_n \to \infty$ and
$\dmax_n = o(n^{1/3})$, because $D_n^3$ is uniformly integrable.
\end{remark}

We will see later
that also Theorem  \ref{T1} follows from \refT{TX}.

The proof of Theorem \ref{TX} will use the following lemmas; the second
generalizes \JL{Lemmas 5.6 and 6.4}.

\begin{lemma}
  \label{L0}
Assume \ref{Xfirst}--\ref{Xlast} and let $T_n$ be random times such that
$T_n\pto\tau$.
Then
\begin{equation}\label{aT}
\begin{split}
 \sup_{0\le t\le T_n}\frac1{n\gamn}\Bigabs{\tS_n(\gant)-S_n(\gant)}
=
 \sup_{0\le t\le T_n}\frac1{n\gamn}\Bigabs{\tA_n(\gant)-A_n(\gant)}
\pto0.
\end{split}
\end{equation}
\end{lemma}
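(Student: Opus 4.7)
The identity part of \eqref{aT} is immediate from the definition~\eqref{m33}: $\tA_n(t)-A_n(t)=-(\tS_n(t)-S_n(t))$, so the two suprema are equal. All the content lies in showing that either quantity is $\op(1)$ after division by $n\gamn$.

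The plan is to combine \eqref{jlss}, which gives the one-sided inequality
\begin{equation*}
0\le \tS_n(t)-S_n(t) < -\inf_{s\le t}\tA_n(s) + \dmax_n,
\end{equation*}
with the tight approximation $\tA_n(\gan u)/(n\gamn)\approx \psi_n(u)$ from \ref{XA}. Dividing through by $n\gamn$, substituting $t=\gan u$, and observing that the supremum over $t\le T_n$ of the left-hand side bound coincides with $-\inf_{u\le T_n}\tA_n(\gan u)/(n\gamn)+\dmax_n/(n\gamn)$, we find that it is enough to prove
\begin{equation*}
-\inf_{u\le T_n}\frac{\tA_n(\gan u)}{n\gamn}\pto 0,
\end{equation*}
because $\dmax_n/(n\gamn)\to 0$ by \ref{Xdmax} and the expression we are bounding is automatically nonnegative.

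By \ref{XA}, $\sup_{u\le 2\tau}\bigabs{\tA_n(\gan u)/(n\gamn)-\psi_n(u)}\pto 0$, so it suffices to bound $\inf_{u\le T_n}\psi_n(u)$ from below by $-\delta$ (for arbitrary $\delta>0$) with probability tending to $1$. Here I would use the three qualitative pieces of \ref{Xpsi}. From \ref{Xpsi}(f) (equicontinuity at $\tau$), a standard contradiction argument yields some $\eps_0>0$ and $N_0$ such that $|\psi_n(u)|<\delta$ for $u\in[\tau-\eps_0,\tau+\eps_0]$ and $n\ge N_0$. Shrinking $\eps_0$ if needed so that $\tau'<\tau-\eps_0$, \ref{Xpsi}(d) applied to $[\tau',\tau-\eps_0]$ gives $\psi_n\ge 0$ on that interval for large $n$, and \ref{Xpsi}(c) gives $\psi_n\ge 0$ on $[0,\tau']$. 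Since $T_n\pto\tau$, the event $\{T_n\le\tau+\eps_0\}$ holds with probability tending to $1$, and on it $\inf_{u\le T_n}\psi_n(u)\ge -\delta$ for large $n$.

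Putting everything together, for any $\delta>0$ the sup of $(\tS_n(\gant)-S_n(\gant))/(n\gamn)$ is eventually at most $\delta+o_p(1)+o(1)$, which is the desired convergence. The main obstacle is really just the verification of the uniform bound near $\tau$ from the sequential statement \ref{Xpsi}(f); the rest is a routine splitting argument and substitution into the deterministic bound from \eqref{jlss}.
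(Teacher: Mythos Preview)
Your proof is correct and follows essentially the same approach as the paper's: both reduce via \eqref{jlss} and \ref{Xdmax} to showing $\inf_{u\le T_n}\tA_n(\gan u)/(n\gamn)\pto 0$, and then via \ref{XA} to controlling $\inf_{u\le T_n}\psi_n(u)$. The paper uses the Skorohod coupling to assume $T_n\asto\tau$ and then argues by contradiction (extracting a subsequence $t_n\to t'\in[0,\tau]$ and handling the cases $t'=0$, $t'=\tau$, $0<t'<\tau$ separately via \ref{Xpsitau'}, \ref{Xpsiequi}, \ref{Xpsiab}), whereas you work directly in probability and partition $[0,\tau+\eps_0]$ into three intervals on which \ref{Xpsitau'}, \ref{Xpsiab}, \ref{Xpsiequi} respectively bound $\psi_n$ from below; these are equivalent packagings of the same idea.
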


\begin{proof}
We may replace $T_n$ by $T_n\land(2\tau)$, since \whp{}
$T_n\land(2\tau)=T_n$; hence we may assume that $T_n\le2\tau$.
Furthermore, using the Skorohod coupling theorem, we may assume that
$T_n\asto\tau$.
We note next that this implies
\begin{equation}\label{paddington}
  \inf_{0\le t\le T_n} \psi_n(t) \to0
\end{equation}
a.s., and thus in probability.
In fact, if \eqref{paddington} fails at some point in the probability
space,
and $T_n\to\tau$,
then there exists $t_n$,
at least for some subsequence of $n$,
with $0\le t_n\le T_n=\tau+o(1)$ and $\psi_n(t_n)<-\eps$, for some $\eps<0$.
(Recall that $\psi_n(0)=0$, so the infimum is never positive.)
We may select a further subsequence with
$t_n\to t'\in[0,\tau]$;
this contradicts \ref{Xpsi}.
(Consider the cases $t'=0$, $t'=\tau$ and $0<t'<\tau$ separately, and use
\ref{Xpsitau'}, \ref{Xpsiequi}, \ref{Xpsiab}.)

 By \eqref{paddington} and
\ref{XA},
\begin{equation}
 \inf_{0\le t\le T_n}\frac1{n\gamn}\tA_n(\gant)
\pto0,
\end{equation}
and thus by \eqref{jlss} and \ref{Xdmax}
\begin{equation}\label{aTx}
\begin{split}
 \sup_{0\le t\le T_n}\frac1{n\gamn}\Bigabs{\tS_n(\gant)-S_n(\gant)}
&=
 \sup_{0\le t\le T_n}\frac1{n\gamn}\Bigabs{A_n(\gant)-\tA_n(\gant)}
\\&
\le-
 \inf_{0\le t\le T_n}\frac1{n\gamn}\tA_n(\gant)+\frac{\dmax_n}{n\gamn}
\pto0.
\end{split}
\end{equation}
\end{proof}

In what follows we consider several random times. They generally depend on $n$
but we simplify the notation and denote them by $T_1, \Tx_1,\dots$ as an
abbreviation of $T_{1n},\dots$

\begin{lemma}
  \label{LC+}
Let\/ $\Tx_1$ and $\Tx_2$ be two (random) times when \sC1 are performed,
with $\Tx_1\le \Tx_2$, and assume that $\Tx_1/\gan\pto t_1$ and
$\Tx_2/\gan\pto t_2$ where
$0\le t_1\le t_2\le \taux$.
If\/ $\tC$ is the union of all components explored between $\Tx_1$ and
$\Tx_2$, then, under assumptions \ref{Xfirst}--\ref{Xlast},
\begin{align*}
  v(\tC)
&=n\gan\bigpar{\hg(t_2)-\hg(t_1)}+\op(n\gan),
\\
  e(\tC)
&=\frac12n\gan\bigpar{\hh(t_2)-\hh(t_1)}+\op(n\gan).
\end{align*}
In particular, if $t_1=t_2$, then $v(\tC)=\op(n\gan)$ and $e(\tC)=\op(n\gan)$.
\end{lemma}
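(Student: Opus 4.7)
The plan starts from the bookkeeping identities
\begin{equation*}
v(\tC) = V_n(\Tx_1{-}) - V_n(\Tx_2{-}),
\qquad
e(\tC) = \tfrac12\bigpar{S_n(\Tx_1{-}) - S_n(\Tx_2{-})},
\end{equation*}
which the description of the exploration between two \sC1 steps makes transparent: the vertices that leave the sleeping pool during $[\Tx_1,\Tx_2]$ are precisely those of $\tC$, and likewise for half-edges. Thus the lemma reduces to estimating these two decrements to within $\op(n\gan)$.

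For the $S_n$ decrement I apply \ref{XS}: substituting the random time $t = \Tx_i/\gan$ (which lies in $[0, 2\taux]$ w.h.p.\ since $\Tx_i/\gan \pto t_i \le \taux$) into the uniform convergence statement, and invoking continuity of $\hh$, gives $\tS_n(\Tx_i) = \tS_n(0) - n\gan\hh(t_i) + \op(n\gan)$ for $i=1,2$. Subtracting,
\begin{equation*}
\tS_n(\Tx_1) - \tS_n(\Tx_2) = n\gan\bigpar{\hh(t_2) - \hh(t_1)} + \op(n\gan).
\end{equation*}
Then \refL{L0} applied with $T_n := (\Tx_2/\gan) \vee \taux$, which satisfies $T_n \pto \taux$ and dominates both $\Tx_i/\gan$, replaces $\tS_n$ by $S_n$ at an error of $\op(n\gamn) = \op(n\gan)$, using $\gamn = O(\gan)$ from \ref{Xag}. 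This delivers the stated formula for $e(\tC)$.

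For the $V_n$ decrement the scheme is identical, with \ref{XV} in place of \ref{XS}, provided one has an analog of \refL{L0} for $V_n$ versus $\tV_n$. I will supply it from the deterministic pointwise bound
\begin{equation*}
0 \le \tV_n(t) - V_n(t) \le \tS_n(t) - S_n(t), \qquad t \ge 0.
\end{equation*}
The lower bound holds because any sleeping vertex has had no half-edge clock expire by time $t$ (else it would have been awakened via step \sC3), so $V_{n,k}(t) \le \tV_{n,k}(t)$ for each $k$. For the upper bound, the $k=0$ contributions cancel since isolated vertices remain sleeping throughout ($\tV_{n,0} = V_{n,0} = n_0$), while for $k \ge 1$ the trivial estimate $\tV_{n,k} - V_{n,k} \le k(\tV_{n,k} - V_{n,k})$ holds; summing over $k$ yields the claim. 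Combined with \refL{L0}, this gives $|\tV_n(t) - V_n(t)| = \op(n\gan)$ on the relevant interval, after which the argument proceeds exactly as for the edges and delivers $v(\tC) = n\gan(\hg(t_2) - \hg(t_1)) + \op(n\gan)$. The final assertion (for $t_1 = t_2$) is then immediate from $\hg(t_1) = \hg(t_2)$ and $\hh(t_1) = \hh(t_2)$. The only technical point is the substitution of random times into the uniform convergence of \ref{XS} and \ref{XV}, which is routine since $\Tx_i/\gan$ lies in the compact interval $[0, 2\taux]$ with high probability.
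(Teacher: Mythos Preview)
Your proof is correct and follows essentially the same approach as the paper's: the paper likewise starts from the bookkeeping identities, uses \refL{L0} (with the slightly different choice $T_n=\Tx_j/\gan+\tau-t_j$ for each $j$) to pass from $S_n,V_n$ to $\tS_n,\tV_n$, invokes the same deterministic bound $0\le\tV_n(t)-V_n(t)\le\tS_n(t)-S_n(t)$, and then substitutes into \ref{XV}--\ref{XS} using continuity of $\hg,\hh$. Your single choice $T_n=(\Tx_2/\gan)\vee\tau$ is a minor streamlining, and your explicit justification of the $\tV_n-V_n$ bound is more detailed than the paper's one-line reference to \eqref{VS}, but the argument is the same.
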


\begin{proof}
Taking, for $j=1,2$, $T_n=\Tx_j/\ga_n+\taux-t_j$ in \eqref{aT}, we see
that
\begin{equation}
  \label{e11s}
\sup_{0\le t\le \Tx_j}\bigabs{\tS_n(t)-S_n(t)}=\op\bigpar{n\gamn}.
\end{equation}
Since further
$0\le \tV_n(t)-V_n(t)\le\tS_n(t)-S_n(t)$,
see \eqref{VS},
we have also
\begin{equation}
  \label{e11v}
\sup_{0\le t\le \Tx_j}\bigabs{\tV_n(t)-V_n(t)}=\op\bigpar{n\gamn}.
\end{equation}

Since $\tC$ consists of the vertices awakened in the interval $[\Tx_1,\Tx_2)$,
by \eqref{e11v}, \ref{XV} and \ref{Xgh},
\begin{align*}
v(\tC)
&=V_n(\Tx_1-)-V_n(\Tx_2-)
=\tV_n(\Tx_1-)-\tV_n(\Tx_2-)+\op(n\gamn)	
\\&
=n\gan\bigpar{\hg(\Tx_2/\gan)-\hg(\Tx_1/\gan)+\op(1)}
\\&
=n\gan\bigpar{\hg(t_2)-\hg(t_1)+\op(1)}.
\end{align*}
Similarly,
using \eqref{e11s} and \ref{XS},
\begin{align*}
2e(\tC)
&=S_n(\Tx_1-)-S_n(\Tx_2-)
=\tS_n(\Tx_1-)-\tS_n(\Tx_2-)+\op(n\gamn)	
\\&
=n\gan\bigpar{\hh(\Tx_2/\gan)-\hh(\Tx_1/\gan)+\op(1)}
\\&
=n\gan\bigpar{\hh(t_2)-\hh(t_1)+\op(1)}.
\qedhere
\end{align*}
\end{proof}

\begin{proof}[Proof of Theorem \ref{TX}]
Note that \eqref{aT} (with $T_n=\tau$) and \ref{XA} show that
\begin{equation}\label{magn}
\sup_{t\le\taux}\lrabs{\frac1{n\gam_n}A_n(\gant)-\psi_n(t)}\pto0.
\end{equation}
Hence, using \ref{Xpsiab},
for every
$\eps>0$, \whp{} $A_n(t)>0$ on
$[\gan\eps,\gan(\taux-\eps)]$,
so no new components are started during that interval. On the other hand,
if $0<\vep<\tau$, then
by \eqref{m33}, \ref{XA} and \eqref{aT},
\begin{equation*}
  \begin{split}
\frac1{n\gamn}\bigsqpar{
\bigpar{\tS_n(&\gan(\taux+\vep))-S_n(\gan(\taux+\vep))}
-
\bigpar{\tS_n(\gan\taux)-S_n(\gan\taux)}}
\\&
=\frac1{n\gamn}\bigsqpar{
\bigpar{A_n(\gan(\taux+\vep))-\tA_n(\gan(\taux+\vep))}
-
\bigpar{A_n(\gan\taux)-\tA_n(\gan\taux)}}
\\&
\ge-\frac1{n\gamn}
\tA_n(\gan(\taux+\vep))
-\frac1{n\gamn}
\bigpar{A_n(\gan\taux)-\tA_n(\gan\taux)}
\\&
=-\psi_n(\taux+\vep)+\op(1).
  \end{split}
\end{equation*}
This is \whp{} positive,
since
$\limsup_{\ntoo}\psi_n(\taux+\vep)<0$ by \ref{Xpsi-},
and then \sC1 is
performed at least once between $\alpha_n \tau$ and $\alpha_n (\tau+ \vep)$.

Consequently, if $T_1$ is the last time \sC1 is performed
before $\gan\taux/2$ and $T_2$ is the next time, then
\whp{} $0\le T_1\le\gan\vep$ and
$\gan(\taux-\vep)\le
T_2\le\gan(\taux+\vep)$. Since $\vep$ can be
chosen arbitrarily small, this shows that $T_1/\gan\pto0$
and $T_2/\gan\pto\taux$.

Let $\cC'$ be the component explored between $T_1$ and
$T_2$. By \refL{LC+} (with $t_1=0$ and $t_2=\tau$),
$\cC'$ has
\begin{equation}\label{ccv}
v(\cC')=n\gan(\hg(\taux)+\op(1))
\end{equation}
vertices and
\begin{equation}\label{cce}
e(\cC')=
  \tfrac12 n\gan(\hh(\taux)+\op(1))
\end{equation}
edges.

It remains to prove that all other components  have only $\op(n\gan)$ edges
(and thus vertices) each. (This implies $\cC_1=\cC'$.)
We argue
as in \JL{pp.\ 213--214 (end of Section 6)}. We fix a small $\vep>0$
and say that a component is \emph{large} if it has at least
$\vep n\ga_n$ edges, and thus at least $2\vep n\ga_n$ half-edges.
If $\vep$ is small enough,
then \whp{} $\cC'$ is large by \eqref{cce}, and further
$(\hht-\vep)n\gan<2e(\pC)<(\hht+\vep)n\gan$.
Let $\eee$ be the event that
$2e(\pC)<(\hht+\vep)n\gan$ and that the total number of half-edges in large
components is at least
$(\hht+2\vep)n\gan$.

It follows by \refL{LC+} applied to $T_0=0$ and $T_1$ that the total
number of vertices and half-edges in components found before $\pC$ is
$\op(n\gan)$.
Thus there exists a sequence $\gan'$ of constants such that
$\gan'=o(\gan)$ and \whp{} at most $n\gan'$ vertices are awakened and at most $n\gan'$ half-edges are made active before
$T_1$, when the first large component is found.

Let us now condition on the final
graph obtained through our component-finding algorithm.
It follows from our specification of $\cC_1$ that,
given
$G^*(n,(d_i)_1^n)$,
the components appear in our process in size-biased
order (with respect to the number of edges), obtained by
picking half-edges uniformly at random (with replacement, for
simplicity) and taking the corresponding components, ignoring every
component that already has been taken. We have seen that \whp{} this
finds components containing at most $n\gan'$ vertices and half-edges before a
half-edge in a large component is picked. Therefore, starting again
at $T_2$, \whp{} we find at most $n\gan'$ half-edges in new components
before a half-edge is chosen in some large component; this half-edge
may belong to $\pC$, but if $\eee$ holds, then with probability at
least $\vep_1\=1-(\hht+\vep)/(\hht+2\vep)>0$ it does not, and
therefore it belongs to a new large component. Consequently, with
probability at least $\vep_1\P(\eee)+o(1)$, the algorithm
finds a second large component at a time $T_3$, and
less than $n\gan'$ vertices and half-edges between $T_2$ and $T_3$. In this
case,
let $T_4$ be the time this second large component is completed. If
no such second large component is found, let for definiteness
$T_3=T_4=T_2$.

The number of half-edges found between $T_2$ and $T_3$ is,
using $\tS_n(t)\ge S_n(t)$, \eqref{aT} with $T_n=T_2/\gan$,
\ref{Xag}
and \ref{XS} together with the fact that $T_2/\gan\le2\tau$ \whp,
\begin{equation*}
  \begin{split}
S_n(T_2-)-S_n(T_3-)	
&\ge
\tS_n(T_2-)-(\tS_n(T_2-)-S_n(T_2-))-\tS_n(T_3-)	
\\&
=
\tS_n(T_2-)-\tS_n(T_3-)	+\op(n\gamn)
\\&
\ge
\tS_n(T_2-)-\tS_n((2\gan\taux)\land T_3-)	+\op(n\gamn)
\\&
=n\gan\bigpar{\hh((2\taux)\land (T_3/\gan))-\hh(T_2/\gan)}+\op(n\gan).
  \end{split}
\end{equation*}
Since, by the definitions above,  this is at most $n\gan'=o(n\gan)$,
it follows that
$\hh((2\taux)\land (T_3/\gan))-\hh(T_2/\gan)\le \op(1)$.
Furthermore, $T_2\le T_3$ and $T_2/\gan\pto\taux$, and thus \whp{}
$T_2/\gan\le 2\tau$. Hence, using \ref{Xgh},
it follows that $(2\taux)\land (T_3/\gan)-\taux=\op(1)$, and thus
$T_3/\gan\pto\taux$.
Consequently, \eqref{aT} applies to $T_n=T_3/\gan$, and, since no \sC1 is
performed between $T_3$ and $T_4$, using also \ref{Xdmax} again,
\begin{equation}
  \label{f1}
\sup_{t\le T_4}\bigabs{\tS_n(t)-S_n(t)}
\le
\sup_{t\le T_3}\bigabs{\tS_n(t)-S_n(t)}+\dmax_n
=\op(n\gamn).
\end{equation}

Let $t_0\in(\taux,2\taux)$; then by \ref{Xpsi-},
for some $\gd>0$,
$\psi_n(t_0)<-2\gd$ for all large $n$, and thus \ref{XA}
shows that
\whp{} $\tA_n(\gan t_0) \le -n\gamn\gd$ and thus
\begin{equation*}
  \tS_n(\gan t_0)-S_n(\gan t_0)
=A_n(\gan t_0)-\tA_n(\gan t_0) \ge
-\tA_n(\gan t_0) \ge
n\gamn\gd.
\end{equation*}
Hence \eqref{f1} shows that \whp{} $T_4<\gan t_0$. Since $t_0-\taux$
can be chosen arbitrarily small,
and further $T_2\le T_3\le T_4$ and $T_2/\gan\pto\taux$, it
follows that $T_4/\gan\pto\taux$.

Finally, by \refL{LC+} again, this time applied to $T_3$ and $T_4$, the
number of edges found between $T_3$ and $T_4$ is
$\op(n\gan)$. Hence,
\whp{} there is no large component found there, although the
construction gave a large component with probability at least
$\vep_1\P(\eee)+o(1)$.
Consequently,
$\vep_1\P(\eee)=o(1)$ and thus
$\P(\eee)=o(1)$.

Recalling the definition of $\eee$, we see that \whp{} the total
number of half-edges in large components is less than $(\hht+2\vep)n\gan$;
since \whp{} at least $(\hht-\vep)n\gan$ of these belong to $\pC$,
see \eqref{cce},
there are at most $3\vep n\gan$ half-edges, and therefore
at most $\frac32\vep n\gan+1$ vertices, in any other component.

Choosing $\vep$ small enough, this shows that \whp{} $\cC_1=\pC$, and
further
$v(\cC_2)\le e(\cC_2)+1\le \frac32\vep n\gan+1$. This completes the proof
of Theorem \ref{TX}.
\end{proof}

\subsection{Proof of Theorems \ref{T1}--\ref{TDx}}\label{SSspecial}

Now suppose that we are given a sequence of degree distributions $D_n$
satisfying the conditions \refAApos.
We choose the parameters in \ref{Xfirst}--\ref{Xlast}
as follows, where $\rho_n$ as before is the survival probability of a
Galton--Watson process with offspring distribution $\tD_n$, see
\eqref{rhon}.
(Note that \eqref{svaga} is the same as \eqref{gan}.)
\begin{align}
\tau&:=1 \label{svatau}
\\
\gan&:=-\log(1-\rho_n) \label{svaga}
\\
    \hg(t)&:=\mu t, \qquad \hh(t):=2\mu t,
\label{svagh}
\\
  \gamma_n&:=\E (D_n(1\land\ga_n D_n)^2),\label{svagam}
\\
\psi_n(t)&:=
\gamn\qw\bigpar{ \mu_n\e^{-2\ga_n t} - \E\bigpar{D_n\e^{-\ga_n t D_n}} }
.\label{svapsi}
\end{align}
Recall that, by \refL{Lrho}, $\rho_n\to0$ and $\ga_n\to0$.

We next show that under the conditions of \refT{T1},
these parameters
satisfy \ref{Xfirst}--\ref{Xlast} (possibly except for some small $n$ that
we may ignore). This will take a series of lemmas.

\begin{lemma}\label{L123}
Assume \refAApos.
Then
the parameters defined in \eqref{svatau}--\eqref{svapsi}
satisfy \ref{Xtau}, \ref{Xag}, \ref{Xgh} and \ref{Xpsi'},
  and thus also \ref{Xpsi}, at least for $n$ large.
Furthermore,
\begin{equation}\label{ga2gam}
  \ga_n^2=O(\gam_n).
\end{equation}
\end{lemma}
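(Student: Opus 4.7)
The plan is to verify \ref{Xtau}, \ref{Xag}, \ref{Xgh}, \ref{Xpsi'} one at a time from the defining formulas \eqref{svatau}--\eqref{svapsi}, and then establish \eqref{ga2gam} separately. Conditions \ref{Xtau} and \ref{Xgh} are immediate from $\tau=1$ and $\mu>0$. For \ref{Xag}, the positivity of $\alpha_n$ and $\gamma_n$ follows from $\rho_n\in(0,1)$ and $\P(D_n\ge1)>0$; the upper bound $\gamma_n=O(\alpha_n)$ uses $(1\wedge x)^2\le x$ to give $\gamma_n\le\alpha_n\E D_n^2$, which is $O(\alpha_n)$ by \ref{A2}. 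For the matching lower bound \eqref{ga2gam}, I will discard all but the degree-one contribution:
\[
\gamma_n \ge \P(D_n=1)(1\wedge\alpha_n)^2 = \P(D_n=1)\alpha_n^2
\]
(valid once $\alpha_n\le1$, hence for all large $n$), and then invoke $\P(D_n=1)\to\P(D=1)>0$ from \refR{Rneq2}.

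For \ref{Xpsi'}, set $f_n(t):=\mu_n e^{-2\alpha_n t}-\E D_n e^{-\alpha_n t D_n}$ so $\psi_n=f_n/\gamma_n$. Continuity is clear, $\psi_n(0)=0$ is immediate, and the identity $\psi_n(1)=0$ is an exact reformulation of the survival equation \eqref{frej}. For the point bounds $\liminf\psi_n(1/2)>0$ and $\psi_n(2)=O(1)$, I will Taylor-expand $\Phi_n(\beta):=\mu_n e^{-2\beta}-\E D_n e^{-\beta D_n}$ to second order near zero, obtaining
\[
\Phi_n(\beta) = \beta\mu_n\epsilon_n - \tfrac{\beta^2}{2}(\E D_n^3-4\mu_n) + \text{(lower order)},
\]
and use the identity $\Phi_n(\alpha_n)=0$ to eliminate $\mu_n\epsilon_n$ in favour of $\tfrac{\alpha_n}{2}(\E D_n^3-4\mu_n)$. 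Substituting $\beta=\alpha_n/2$ and $\beta=2\alpha_n$ then yields $f_n(1/2),|f_n(2)|\asymp\alpha_n^2(\E D_n^3-4\mu_n)$; combined with the bound $\gamma_n\le\alpha_n^2\E D_n^3$ (obtained by splitting the definition of $\gamma_n$ at $D_n=1/\alpha_n$ and using $D_n\le\alpha_n D_n^2$ on the complement), this gives the required estimates on the ratio $f_n/\gamma_n$.

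The main obstacle will be the concavity of $\psi_n$ on $[0,2]$, which, since $\psi_n''(t)=\alpha_n^2\gamma_n^{-1}\Phi_n''(\alpha_n t)$, amounts to showing
\[
\Phi_n''(\beta) = 4\mu_n e^{-2\beta} - \E D_n^3 e^{-\beta D_n} \le 0 \qquad\text{for all } \beta\in[0,2\alpha_n]
\]
and large $n$. At $\beta=0$ this holds with strict margin: Cauchy--Schwarz $(\E D_n^2)^2\le\mu_n\E D_n^3$, together with $\E D_n^2=2\mu_n+\mu_n\epsilon_n$, implies $\E D_n^3-4\mu_n\ge4\mu_n\epsilon_n$. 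Extending this bound to all $\beta\in[0,2\alpha_n]$ is delicate because \ref{A2} does not give uniform integrability of $D_n^3$. I will handle this by introducing a truncation $M_n$ with $\alpha_n M_n\to0$ and using the pointwise bound $e^{-\beta D_n}\ge e^{-2\alpha_n M_n}$ on $\{D_n\le M_n\}$ to get $\E D_n^3 e^{-\beta D_n}\ge(1+o(1))\E D_n^3\1_{D_n\le M_n}$; the residual term is absorbed by the $4\mu_n\epsilon_n$ margin from $\beta=0$. The subtle point is choosing $M_n$ uniformly across the two qualitatively different regimes ($\E D_n^3=O(1)$ versus $\E D_n^3\to\infty$), and this is what makes the concavity step the main technical hurdle.
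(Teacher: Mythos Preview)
Your treatment of \ref{Xtau}, \ref{Xag}, \ref{Xgh} and \eqref{ga2gam} is fine (for \eqref{ga2gam} the paper uses the slightly sharper observation that $1\wedge\alpha_n D_n\ge\alpha_n$ whenever $D_n\ge1$, giving $\gamma_n\ge\alpha_n^2\mu_n$, but your degree-one argument also works).

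The approach to \ref{Xpsi'}, however, has a genuine gap in two places.

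\emph{The Taylor remainder is not controlled.} Writing $e^{-x}=1-x+\tfrac{x^2}{2}-\phi_3(x)$ with $0\le\phi_3(x)\le\min(\tfrac{x^3}{6},\tfrac{x^2}{2})$, the exact identity is
\[
\Phi_n(\beta)=\beta\mu_n\eps_n-\tfrac{\beta^2}{2}(\E D_n^3-4\mu_n)+\E\bigl[D_n\phi_3(\beta D_n)\bigr]-\mu_n\phi_3(2\beta).
\]
For $\beta\asymp\alpha_n$ the remainder $\E D_n\phi_3(\beta D_n)$ is of order $\alpha_n^2\,\E\bigl[D_n^3(\alpha_n D_n\wedge1)\bigr]$, and under \ref{A2} alone there is no reason this is $o\bigl(\alpha_n^2(\E D_n^3-4\mu_n)\bigr)$: when the mass of $\E D_n^3$ sits above $1/\alpha_n$, the remainder is comparable to the main second-order term. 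So the claimed asymptotic $f_n(1/2),|f_n(2)|\asymp\alpha_n^2(\E D_n^3-4\mu_n)$ does not follow. (Separately, even granting that asymptotic, your stated bound $\gamma_n\le\alpha_n^2\E D_n^3$ goes the wrong way for the \emph{upper} bound $|\psi_n(2)|=O(1)$.)

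\emph{The concavity margin is mis-identified and the truncation can fail.} Cauchy--Schwarz gives only $\E D_n^3-4\mu_n\ge4\mu_n\eps_n\to0$, which is not a usable uniform margin. The true margin is in fact a positive constant: $\E D_n^3-4\mu_n=\E D_n(D_n-2)^2+4(\E D_n^2-2\mu_n)\to\E D(D-2)^2>0$ by Fatou and \ref{AO}. But even with that, truncation at $M_n$ with $\alpha_n M_n\to0$ does not extend the bound to $\beta\in[0,2\alpha_n]$ in general: one would need $\E D_n^3\indic{D_n\le M_n}\ge 4\mu+o(1)$, and when $D_n^2$ is not uniformly integrable it is possible that $\E D^3<4\mu$ while nearly all of $\E D_n^3$ lives above any such $M_n$.

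The paper avoids both problems by a different route. For concavity it uses the decomposition $D_n^3=D_n(D_n-2)^2+4D_n^2-4D_n$ inside $\E D_n^3 e^{-\beta D_n}$: Fatou on the first term gives a contribution $\ge\E D(D-2)^2$, and the estimate $\E D_n^2(1-e^{-2\alpha_n D_n})=O(\eps_n)$ from \eqref{sard} shows the second term is $2\mu+o(1)$; together these yield $\limsup\psi_n''<0$ uniformly on $[0,2]$, with no truncation and no higher moments. For $\psi_n(1/2)$ and $\psi_n(2)$ it does not Taylor-expand at all but instead exploits the exact identity $\psi_n(1)=0$ to write
\[
\gamma_n\bigl(\psi_n(2)-2\psi_n(1)\bigr)=-\E D_n(1-e^{-\alpha_n D_n})^2+\mu_n(1-e^{-2\alpha_n})^2,
\]
and similarly for $2\psi_n(1/2)-\psi_n(1)$; these ``completed-square'' forms compare directly with $\gamma_n=\E D_n(1\wedge\alpha_n D_n)^2$ and give the required bounds without any moment beyond \ref{A2}.
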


\begin{proof}
\pfitemref{Xtau} Trivial.

\pfitemref{Xag}
Since $\eps_n>0$, we have $\rho_n>0$ and thus $\ga_n>0$ and
$\gam_n>0$.
Furthermore, by
\eqref{svagam},
\begin{equation}\label{gamga}
  \gam_n \le \E (D_n(\ga_n D_n)) = \ga_n \E D_n^2 = O(\ga_n).
\end{equation}

\pfitemref{Xgh}
Trivial by \eqref{svagh}.

\pfitemref{Xpsi'}
By the definition \eqref{svapsi} and \eqref{frej},
$\psi_n(0)=0$ and $\psi_n(\tau)=\psi_n(1)=0$.

$\psi_n(t)$ is trivially continuous. (Recall that each $D_n$ is a discrete
random variable taking only a finite number of different values.)

We next show that $\psi_n$
is concave on
$[0,2\tau]=[0,2]$ for large $n$.
(It is not always concave on $(0,\infty)$, nor does it have to be concave on
$[0,2]$ for small $n$,
as can be seen by simple counterexamples with $D_n\in\set{1,3}$.)
By \eqref{svapsi},
	\begin{equation}\label{fj}
 	 \gamn\gan\qww\psi_n''(t)
	=
	4\mu_n\e^{-2\ga_n t} - \E\bigpar{D_n^3\e^{-\ga_n t D_n}} .
	\end{equation}
For every $t\in[0,2]$ we have
\begin{equation}\label{sala}
  \begin{split}
\E\bigpar{D_n^3\e^{-\ga_n t D_n}} &
= \E\bigpar{D_n(D_n-2)^2\e^{-\ga_n t D_n}}
+ 4 \E\bigpar{D_n^2\e^{-\ga_n t D_n}}
- 4 \E\bigpar{D_n\e^{-\ga_n t D_n}}
\\&
\ge \E\bigpar{D_n(D_n-2)^2\e^{-2\ga_n  D_n}}
+ 4 \E\bigpar{D_n^2\e^{-2\ga_n D_n}}
- 4 \E D_n.
  \end{split}
\end{equation}
For the first term on the \rhs{} of \eqref{sala} we may assume, by the Skorohod coupling (see Section~\ref{SSSkorohod}), that $D_n\asto D$
and thus $D_n(D_n-2)^2\e^{-2\ga_n D_n} \asto D(D-2)^2$;
thus Fatou's lemma yields
\begin{equation}\label{salb}
\liminf_\ntoo \E\bigpar{D_n(D_n-2)^2\e^{-2\ga_n D_n}}
\ge \E\bigpar{D(D-2)^2}.
\end{equation}
Next, using \eqref{sard},
\begin{equation}\label{salc}
\E\bigpar{D_n^2\bigpar{1-\e^{-2\ga_n D_n}}}
\le
2\E\bigpar{D_n^2(1\land \ga_n D_n)}
=O(\eps_n)=o(1)
\end{equation}
and thus, using also \eqref{EDD-2},
\begin{equation}\label{jularbo}
\E\bigpar{D_n^2\e^{-2\ga_n D_n}}
=
\E\bigpar{D_n^2}+O(\eps_n)
=
\E\bigpar{D_n(D_n-2)}+2\E D_n+O(\eps_n)
=
2\mu +o(1).
\end{equation}
Combining \eqref{sala}--\eqref{jularbo}
and $\E D_n=\mu_n\to \mu$,
we obtain
\begin{equation}\label{lyman}
  \liminf_\ntoo\inf_{t\in[0,2]}\E\bigpar{D_n^3\e^{-\ga_n t D_n}}
\ge \E \bigpar{D(D-2)^2}+8\mu-4\mu
\end{equation}
and thus by \eqref{fj} and \ref{AD>2},
\begin{equation}
  \limsup_\ntoo\sup_{t\in[0,2]}
\gam_n\gan\qww\psi_n''(t)
\le -\E \bigpar{D(D-2)^2}
<0.
\end{equation}
Consequently, for $n$ large, $\psi''_n(t)<0$ on $[0,2]$, and thus
$\psi_n$ is concave in this interval.

Next we verify \eqref{ga2gam}. In fact, if $D_n\neq0$, then $1\land
\ga_nD_n\ge\ga_n$ and thus the definition \eqref{svagam} implies
\begin{equation}\label{kaj}
  \gamn\ge\E(D_n\gan^2)=\gan^2\mu_n.
\end{equation}
Thus $\gan^2/\gam_n\le 1/\mu_n=O(1)$, since $\mu_n\to\mu>0$.

We now complete the proof of \ref{Xpsi'}. We can write the definition \eqref{svapsi} as
\begin{equation}
  \gam_n\psi_n(t)
=
\E\bigpar{D_n(1-\e^{-\ga_n t D_n})}
-
 \mu_n\bigpar{1-\e^{-2\ga_n t}}.
\end{equation}
Since $\psi_n(1)=0$, we thus have
\begin{equation}
  \begin{split}
\gam_n\psi_n(2)&=\gam_n\psi_n(2)-2\gam_n\psi_n(1)
\\&
=
-\E\bigpar{D_n(1-2\e^{-\ga_n D_n}+\e^{-2\ga_n  D_n})} 	
+  \mu_n\bigpar{1-2\e^{-2\ga_n }+\e^{-4\ga_n }}
\\&
=
-\E\bigpar{D_n(1-\e^{-\ga_n D_n})^2} 	
+  \mu_n\bigpar{1-\e^{-2\ga_n }}^2.
  \end{split}
\end{equation}
Consequently, using \eqref{kaj},
\begin{equation}
  \gam_n\psi_n(2)
\le  \mu_n\bigpar{1-\e^{-2\ga_n }}^2
\le  4\mu_n\ga_n ^2
\le4\gam_n,
\end{equation}
and, by \eqref{svagam},
\begin{equation}\label{psi2}
-  \gam_n\psi_n(2)
\le
\E\bigpar{D_n(1-\e^{-\ga_n D_n})^2} 	
\le
\E\bigpar{D_n(1\land \ga_n D_n)^2} 	
=\gam_n.
\end{equation}
Consequently,
$-1\le\psi_n(2)\le4$ and thus
$|\psi_n(2)|\le4$.

Similarly,
\begin{equation}\label{mau}
  \begin{split}
2\gam_n\psi_n(\tfrac12)&=2\gam_n\psi_n(\tfrac12)-\gam_n\psi_n(1)
\\&
=\E\bigpar{D_n(1-2\e^{-\ga_n D_n/2}+\e^{-\ga_n  D_n})} 	
-  \mu_n\bigpar{1-2\e^{-\ga_n }+\e^{-2\ga_n }}
\\&
=
\E\bigpar{D_n(1-\e^{-\ga_n D_n/2})^2} 	
-  \mu_n\bigpar{1-\e^{-\ga_n }}^2.
  \end{split}
\end{equation}
Denote the two terms
on the right-hand side of \eqref{mau} by $A_1$ and $A_2$.
Since $1-\ee{-x}\asymp 1\land x$,
	\begin{equation}\label{mmmau}
  	A_1\asymp \E\bigpar{D_n(1\land(\gan D_n))^2}=\gam_n.
	\end{equation}
In order to show that $\liminf_\ntoo\psi_n(\frac12)>0$, it thus remains only
to show that $A_1$ is not cancelled by $A_2$.
First, $\ga_n\to0$ 
and thus
$A_2\sim\mu_n\gan^2\sim\mu\gan^2$.
Furthermore, since $1-\ee{-x}\ge x\ee{-x}$ for $x\ge0$,
\begin{equation}
  A_1\ge \frac{\ga_n^2}{4}\E\bigpar{D_n^3\ee{-\ga_nD_n}}.
\end{equation}
Thus, using \eqref{lyman} and \ref{AD>2},
\begin{equation}
  \begin{split}
\liminf_\ntoo\frac{A_1}{A_2}
&\ge\liminf_\ntoo\frac{\E\bigpar{D_n^3\e^{-\ga_n D_n}}}{4\mu}
\ge\frac{\E\bigpar{D(D-2)^2}+4\mu}{4\mu}
>1.
 \end{split}
\raisetag{\baselineskip}
\end{equation}
Since $A_1,A_2\ge0$, it follows that $A_1-A_2\asymp A_1$, and thus
\eqref{mau}--\eqref{mmmau} yield
\begin{equation}
  2\gam_n\psi_n(\tfrac12) \asymp A_1\asymp\gam_n,
\end{equation}
which verifies $\liminf_\ntoo \psi_n(\tfrac12)>0$.
This completes the proof of \ref{Xpsi'}.
\end{proof}

\begin{remark}
  \label{RB}
Note, for later use, that we have shown that, for large $n$ at least,
$\psi_n$ is concave on $[0,2]$ with $\psi_n(0)=\psi_n(1)=0$ and, by
\eqref{psi2}, $\psi_n(2)\ge-1$;
hence $0\ge\psi_n'(1)\ge-1$, and thus $0\le\psi_n(t)\le 1-t$ for $t\in[0,1]$ and
$1-t\le \psi_n(t)\le 0$ for $t\in[0,2]$, so $|\psi_n(t)|\le1$ for $t\in[0,2]$.
\end{remark}

We next show that \ref{XA}, \ref{XV}, \ref{XS} hold if
we replace the random processes
$\tA_n$, $\tV_n$ and $\tS_n$ by their expectations, at least under the extra
assumption that $n\gam_n\to\infty$.

\begin{lemma}
[Asymptotics of means of $\tS_n(t), \tA_n(t), \tV_n(t)$]
\label{L-E-SVA}
Assume \ref{AA}--\ref{Anu=1}, $\eps_n > 0$, and additionally that $n\gam_n\to\infty$.
Then, with parameter values as in \eqref{svatau}--\eqref{svapsi},
for any fixed $t_0$,
    \eqan{
    \sup_{t\le t_0}\lrabs{\frac{1}{n\gan}
\bigl(\E[\tS_n(0)]-\E[\tS_n(\gant)]\bigr)-\hh(t)}&=o(1),
    \lbeq{mean-S}\\
    \sup_{t\le t_0}\lrabs{\frac{1}{n\gan}\bigl(\E\tV_n(0)-\E\tV_n(\gant)\bigr)-\hg(t)}&=o(1),
    \lbeq{mean-V}\\
    \sup_{t\le t_0}\lrabs{\frac{1}{n\gamn}\expec[\tA_n(\gant)]-\psi_n(t)}&=o(1).
    \lbeq{mean-A}
    }
\end{lemma}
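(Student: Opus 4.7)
The plan is to compute $\E\tV_n,\E\tS_n$ directly from the binomial structure $\tV_{n,k}(t)\sim\Bin(n_k,\e^{-kt})$ and to compute $\E L_n$ from the Markovian dynamics of the pairing process, then Taylor-expand in the small parameter $\ga_n$ and invoke \refL{Lrho} to control the remainders.

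Since the $\tV_{n,k}$ are independent across $k$, we have $\E\tV_n(t)=n\E\e^{-tD_n}$ and $\E\tS_n(t)=n\E[D_n\e^{-tD_n}]$. For \refeq{mean-V}, I expand $1-\e^{-\ga_n tD_n}=\ga_n tD_n-\phii(\ga_n tD_n)$ with $\phii$ as in \refeq{phii}. Since $0\le\phii(x)\le x^2/2$, the main term yields $t\mu_n\to t\mu=\hg(t)$ while the error satisfies $\tfrac1{\ga_n}\E\phii(\ga_n tD_n)\le\tfrac12\ga_n t_0^2\E D_n^2=O(\ga_n)$, uniformly on $[0,t_0]$, by \ref{A2}. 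For \refeq{mean-S}, multiplying by $D_n$ gives
\begin{equation*}
\tfrac{1}{n\ga_n}\bigpar{\E\tS_n(0)-\E\tS_n(\ga_n t)}=t\E D_n^2-\tfrac{1}{\ga_n}\E[D_n\phii(\ga_n tD_n)],
\end{equation*}
where the leading term $t\E D_n^2=t\mu_n(2+\eps_n)\to 2\mu t=\hh(t)$ by \ref{AA} and \ref{Anu=1}.

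The main obstacle is the remainder in \refeq{mean-S}: under only \ref{A2} we lack uniform integrability of $D_n^2$, so the naive bound of order $\ga_n\E D_n^3$ is unavailable. I will instead use the two-sided estimate $\phii(x)\asymp x\wedge x^2$ to show, for $t\le t_0$,
\begin{equation*}
D_n\phii(\ga_n tD_n)\le C(t_0)\,\ga_n\bigpar{D_n^2\wedge(\ga_n D_n^3)},
\end{equation*}
via a routine case check on whether $\ga_n D_n\le1$. Taking expectations, \refL{Lrho} yields $\E[D_n^2\wedge(\ga_n D_n^3)]\asymp\eps_n=o(1)$, so $\tfrac1{\ga_n}\E[D_n\phii(\ga_n tD_n)]=O(\eps_n)$ uniformly on $[0,t_0]$, establishing \refeq{mean-S}.

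For \refeq{mean-A}, I compute $\E L_n(t)$ directly from the jump dynamics of the pairing algorithm: $L_n$ is piecewise constant, each C2--C3 pair removes exactly two half-edges, and between consecutive pairings the waiting time for the next spontaneous death has rate $L_n-1$, since the C2-killed half-edge can no longer die spontaneously. Hence the generator acting on $f(L)=L$ equals $(L-1)(-2)$, the Kolmogorov forward equation reads $\tfrac{d}{dt}\E L_n=-2\E L_n+2$, and $\E L_n(t)=1+(\ell_n-1)\e^{-2t}=n\mu_n\e^{-2t}+O(1)$. Therefore
\begin{equation*}
\E\tA_n(\ga_n t)=\E L_n(\ga_n t)-\E\tS_n(\ga_n t)=n\bigpar{\mu_n\e^{-2\ga_n t}-\E[D_n\e^{-\ga_n tD_n}]}+O(1)=n\gamn\psi_n(t)+O(1),
\end{equation*}
and dividing by $n\gamn$ yields an error $O((n\gamn)\qw)=o(1)$ uniformly in $t\le t_0$, using the hypothesis $n\gamn\to\infty$.
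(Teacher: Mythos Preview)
Your proof is essentially the paper's: the same Taylor expansion with $\phii$, the same key bound $D_n\phii(\ga_n tD_n)\le C(t_0)\,\ga_n\bigpar{D_n^2\wedge(\ga_nD_n^3)}$ controlled via \eqref{sard}, and the same reduction of \refeq{mean-A} to $\E L_n(\ga_n t)=n\mu_n\e^{-2\ga_n t}+O(1)$. The only difference is that the paper obtains the last estimate by coupling with a pure death process (see \eqref{serrander}--\eqref{EL}) rather than via the Kolmogorov equation; note also that in the paper's convention $L_n(0)=\ell_n-1$ and the jump rate from state $L$ is $L$, not $L-1$, giving $\tfrac{d}{dt}\E L_n=-2\E L_n$, but this only shifts your formula by $O(1)$ and does not affect the conclusion.
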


\begin{proof}
%
We have, using
	\begin{equation}\label{EDn2}
  	\E D_n^2 = \frac{1}n\sum_k k^2 n_k
	=\E D_n(D_n-1)+\E D_n
	=\mu_n\nu_n+\mu_n=\mu_n(2+\eps_n),
	\end{equation}
and the definition \eqref{phii},
	\begin{equation}\label{vile}
  	\begin{split}
 	\frac{1}{n\ga_n} \bigpar{\E[\tS_n(0)]-\expec[\widetilde S_n(\alpha_n t)]}
	&=
	\frac{1}{n\ga_n} \sum_{k=1}^{\infty} k\Bigl(\expec[\widetilde V\nk(0)]
    	-\expec[\widetilde V\nk(\alpha_n t)]\Bigr)
	\\&
    	=\frac{1}{n\ga_n} \sum_{k=1}^{\infty} k n_k (1-\mathrm{e}^{-\alpha_n
	  tk})
	=\frac{1}{\ga_n} \E\bigpar{D_n(1-\e^{-\ga_ntD_n})}
	\\
    	&=t \E D_n^2+
	\frac{1}{\ga_n} \E\bigpar{D_n(1-\e^{-\ga_ntD_n}-\ga_n t D_n)}
	\\
    	&=t \mu_n(2+\eps_n)
	-\frac{1}{\ga_n} \E\bigpar{D_n \phii(\ga_ntD_n)}.
  	\end{split}
\raisetag{\baselineskip}
	\end{equation}
We now estimate the last term, noting that
\begin{equation}\label{phiixx}
0 \le\phii(x)\le x \land x^2.
\end{equation}
Thus, for all $t\in[0,t_0]$,
\begin{equation}\label{ve}
0\le
  \frac{1}{\ga_n} \E\bigpar{D_n \phii(\ga_ntD_n)}
\le \E\bigpar{D_n(t_0D_n\land (\ga_nt_0^2D_n^2))}
\le (t_0+t_0^2) \E\bigpar{D_n^2\land (\ga_n D_n^3)}.
\end{equation}
By \eqref{sard}, this is $O(\eps_n)=o(1)$, and \eqref{e:mean-S} follows
from \eqref{vile} by the definition \eqref{svagh} of $\hh(t)$.

The proof of \refeq{mean-V} is similar, and easier, as there is one fewer power
of $k$ involved.

To prove \refeq{mean-A},
note first that $L_n(t)$ is a death process where individuals die at rate 1,
except that when someone dies, another is immediately killed (by  \sC2), so
the number of living individuals drops by 2, except when the last is killed;
moreover $L_n(0)=\ell_n-1$,
where we recall from~\eqref{elln} that $\ell_n=n\mu_n$ is the total number of half-edges.
We can couple $L_n(t)$ with a similar process $\LL_n(t)$ starting at
$\LL_n(0)=\ell_n$ so that both processes jump whenever the smaller jumps,
and then
	\begin{equation}
  	\label{serrander}
	|L_n(t)-\LL_n(t)|\le1
	\end{equation}
for all $t$,
cf.\ \cite[Proof of Lemma 6.1]{JanLuc07}.
Then $\frac12\LL_n(t)$ is a standard death process with intensity 2,
starting at $\ell_n/2$, and
thus $\E\LL_n(t)=\ell_n \ee{-2t}$.
Hence,
	\begin{equation}\label{EL}
	\lrabs{\E L_n(t)-\ell_n \e^{-2t}}
	= \lrabs{\E L_n(t)-\E\LL_n(t)}
	\le 1
	\end{equation}
for all $t\ge0$.
Consequently, uniformly in all $t\ge0$,
\begin{equation}
    \expec[\tA_n(\alpha_n t)]
    =\expec[L_n(\alpha_n t)]-\E[\tS_n(\alpha_n t)]
=\ell_n{\e}^{-2\alpha_n t}+O(1)-\sum_{k=0}^{\infty} k n_k\e^{-\alpha_n tk}
\end{equation}
and thus, 
by \eqref{svapsi} and the assumption $n\gam_n\to\infty$,
\begin{equation}\label{veum}
  \begin{split}
\frac{1}n\E \tA_n(\ga_n t)
&= \mu_n\e^{-2\ga_n t} - \E\bigpar{D_n\e^{-\ga_n t D_n}} +O\bigpar{n^{-1}}
=\gamn\psi_n(t)+o(\gam_n), 
\end{split}
\end{equation}
which proves \eqref{e:mean-A}.
\end{proof}

\begin{remark}\label{Rgamma}
In the case when $D_n^3$ is uniformly integrable, or equivalently
$\E D_n^3\to\E D^3<\infty$, the sequence
$(\ga_n^{-2}D_n)\land D_n^3$ is uniformly integrable (since $D_n^3$ is),
and converges a.s.\ to
$D^3$ if we assume $D_n\asto D$, as we may by \refSS{SSSkorohod};
consequently, using \eqref{svagam},
 	 \begin{equation}\label{gamD3}
	\frac{\gam_n}{\ga_n^2} = \E ((\ga_n^{-2}D_n)\land
	D_n^3)\to \E D^3 <\infty.	
  	\end{equation}
Thus, in this case, $\gam_n\asymp\gan^2$.
In other words, we could have defined $\gam_n$ as $\ga_n^2$ or, e.g.,
$\mu_n\ga_n^2$ in this case, instead of by \eqref{svagam} (provided we
modify $\psi_n$ accordingly).
Moreover, a simple calculation using \eqref{ganD3},
which we omit, shows that, with $\kk$ given
by \eqref{kk}--\eqref{kk4},
\begin{equation}\label{psiD3}
\psi_n(t):=\frac{\kk\mu}{2\E D^3} (t -t^2)+o(1), 
\end{equation}
uniformly on each compact interval; thus we may in this case as an
alternative take
$\psi_n(t):=\frac{\kk\mu}{2\E D^3} (t -t^2)$, independently of $n$.
(Cf.\ \refR{Rpsin1} and, with a simple change of time scale, \refR{R07T2.4}.)

On the other hand, if $\E D^3=\infty$,
then, assuming again $D_n\asto D$, we have
$\ga_n^{-2}D_n\land D_n^3 \asto D^3$ since $\ga_n\to0$.
Thus Fatou's lemma yields, instead of \eqref{gamD3},
$\gam_n/\ga_n^2\to\E D^3=\infty$, i.e.,
\begin{equation}\label{ga2gamo}
  \ga_n^2=o(\gam_n).
\end{equation}
Moreover, in this case it is, using \eqref{ga2gamo}, easy to see that
if we
define
\begin{equation}\label{gfn}
  \gf_n(t):=\E\bigpar{D_n(1-\e^{-t\ga_nD_n})}-2\ga_n\mu_nt,
\end{equation}
then
\begin{equation}
\psi_n(t):=\gf_n(t)/\gam_n +o(1)
\end{equation}
uniformly on each compact interval; thus we may in this case as an
alternative take
$ \psi_n(t):=\gf_n(t)/\gam_n$.

In both these cases we can thus use simpler versions of $\gam_n$ and
$\psi_n$; however, we prefer not to do so; instead we use definitions
\eqref{svagam}--\eqref{svapsi}, which work in all cases.
\end{remark}

\begin{remark}\label{Rgamn}
  Typically, as in \refE{exam-power-law},
$\E \bigpar{D_n((\ga_nD_n)\land(\ga_n D_n)^2)}
 \asymp\E \bigpar{D_n(1\land(\ga_n D_n)^2)}$ and then, by \eqref{svagam}
	 and \eqref{sard},
 \begin{equation}\label{rgamn}
\gamma_n=\E \bigpar{D_n(1\land(\ga_n D_n)^2)}
\asymp  \E\bigpar{D_n(\ga_nD_n\land(\ga_n D_n)^2)}
\asymp\ga_n\eps_n.
 \end{equation}
In this case, we could have used $\gam_n:=\ga_n\eps_n$ instead of the choice
\eqref{svagam} (provided we modify $\psi_n$ accordingly).
\end{remark}

We next show that random variables
$\tA_n(t)$, $\tV_n(t)$ and $\tS_n(t)$ are so well concentrated for all $t$ that we may replace them in conditions
\ref{XA}, \ref{XV}, \ref{XS} by their expectations.
For later use, we state the next estimate in a more general form than
needed here; we then give its simpler consequence in Lemma~\ref{LXC1}.

\begin{lemma}[Concentration of $\tS_n(t), \tA_n(t), \tV_n(t)$]
\label{LXC0}
Assume \refAA.
Then there exists a constant $C$ such that,
for any $u\ge0$,
\begin{align}
\E\Bigl[\sup_{t\le u}|\tS_n(t)-\E\tS_n(t)|^2\Bigr]
&\le C n \E \bigpar{D_n^2(1\land uD_n)},
\label{bs0}
\\
\E\Bigl[\sup_{t\le u}|\tV_n(t)-\E\tV_n(t)|^2\Bigr]
&\le C n \E \bigpar{D_n^2(1\land uD_n)},
\label{bv0}
\\
\E\Bigl[\sup_{t\le u}|\tA_n(t)-\E\tA_n(t)|^2\Bigr]
&\le C n \E \bigpar{D_n^2(1\land uD_n)}+C.
\label{ba0}
    \end{align}
\end{lemma}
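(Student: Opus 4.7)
My plan is to apply the semimartingale maximal inequality of \refL{LX} to each of the centered processes $X_V(t):=\tV_n(t)-\E\tV_n(t)$, $X_S(t):=\tS_n(t)-\E\tS_n(t)$, and $X_A(t):=\tA_n(t)-\E\tA_n(t)$. Since each $\tV_{n,k}$ is a pure death process in which an ``alive'' vertex of degree $k$ dies at rate $k$, the Doob--Meyer compensators are immediate: both $\tilde M_V(t):=\tV_n(t)+\int_0^t\tS_n(s)\,ds$ and $\tilde M_S(t):=\tS_n(t)+\int_0^t\sum_k k^2\tV_{n,k}(s)\,ds$ are martingales, starting at $n$ and $\ell_n$ respectively. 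Subtracting expectations, I obtain the semimartingale decompositions
\begin{equation*}
X_V(t)=(\tilde M_V(t)-n)-\int_0^t X_S(s)\,ds,\qquad X_S(t)=(\tilde M_S(t)-\ell_n)-\int_0^t D_S(s)\,ds,
\end{equation*}
where $D_S(s):=\sum_k k^2\bigpar{\tV_{n,k}(s)-\E\tV_{n,k}(s)}$; thus $X_V$ has drift $-X_S$ and $X_S$ has drift $-D_S$, and both are bounded.

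The argument then hinges on two elementary estimates, valid for all $k\ge 1$ and $u\ge 0$: \textbf{(i)} $\sum_{j=0}^\infty \e^{-k2^{-j}u}\bigpar{1-\e^{-k2^{-j}u}}\le C(1\land ku)$, proved by splitting the sum at the $j$ with $k2^{-j}u\asymp 1$ and using $\e^{-y}(1-\e^{-y})\le y\land\e^{-y}$ together with summability of $\sum_i\e^{-2^i}$; and \textbf{(ii)} $\int_0^u t\,\e^{-kt}(1-\e^{-kt})\,dt\le Ck^{-2}\bigpar{1\land(ku)^3}$, which follows from direct integration (the left-hand side equals $[3-4(1+ku)\e^{-ku}+(1+2ku)\e^{-2ku}]/(4k^2)$, which is $O((ku)^3/k^2)$ by Taylor expansion near $ku=0$ and $\le 3/(4k^2)$ for large $ku$). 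Using $\E X_S(t)^2=\sum_k k^2 n_k\e^{-kt}(1-\e^{-kt})$ and $\E D_S(t)^2=\sum_k k^4 n_k\e^{-kt}(1-\e^{-kt})$, estimate (i) gives $\sum_j\E X_S(2^{-j}u)^2\le C\sum_k k^2 n_k(1\land ku)$, and (ii) together with $1\land(ku)^3\le 1\land ku$ gives $\int_0^u t\,\E D_S(t)^2\,dt\le C\sum_k k^2 n_k(1\land ku)$. Plugging these into \refL{LX} proves \eqref{bs0}. The same reasoning proves \eqref{bv0}: here $\E X_V(t)^2=\sum_k n_k\e^{-kt}(1-\e^{-kt})$ and the drift is $-X_S$, and I absorb $\sum_k n_k(\cdot)\le\sum_k k^2 n_k(\cdot)$ using $k\ge 1$.

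For \eqref{ba0} I decompose $X_A=(L_n-\E L_n)-X_S$; the $X_S$ part is already controlled by \eqref{bs0}. For $L_n$, the coupling used in the proof of \refL{L-E-SVA} gives $|L_n(t)-\LL_n(t)|\le 1$ for all $t\ge 0$, where $\LL_n/2$ is a standard rate-$2$ death process starting at $\ell_n/2$; applying the analogous martingale/Doob argument to $\LL_n$ yields $\E\sup_{t\le u}(\LL_n(t)-\E\LL_n(t))^2\le C\ell_n(1\land u)$, and this is bounded by $Cn\E\bigpar{D_n^2(1\land uD_n)}$ since $D_n\ge 1$ forces $\mu_n(1\land u)\le\E\bigpar{D_n^2(1\land uD_n)}$. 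The additive $O(1)$ from the coupling accounts for the $+C$ in \eqref{ba0}. The main technical hurdle is estimate (ii): the exponential decay $\e^{-kt}$, which is lost by the naive bound $1-\e^{-kt}\le 1\land kt$, is essential to beat the $(ku)^2$ overcount that would otherwise spoil the drift integral in \refL{LX} for $ku\gg 1$.
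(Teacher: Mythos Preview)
Your proof is correct and follows essentially the same route as the paper: apply \refL{LX} to the centred processes, using the semimartingale decomposition coming from the pure-death structure of each $\tV_{n,k}$, bound the dyadic variance sum via an estimate equivalent to your (i), bound the drift integral via an estimate equivalent to your (ii), and handle $\tA_n$ through the $\LL_n$ coupling. The only cosmetic differences are that the paper bounds $\e^{-y}(1-\e^{-y})\le y\wedge y^{-1}$ rather than $y\wedge\e^{-y}$ in (i), and absorbs $n(1\wedge u)$ into $n\E\bigpar{D_n^2(1\wedge uD_n)}$ via $\P(D_n=1)\to\P(D=1)>0$ rather than via $D_n\ge1$; both variants work under the paper's assumptions.
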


The final ``${}+C$'' in \eqref{ba0} is probably an artefact of our proof, but it
is harmless for our purposes.

\begin{proof}
The process $\tV\nk(t)$ is a simple death process where each individual dies
with rate $k$;
it follows that
$\tV\nk(t)$ is a semimartingale with drift
$-k\tV\nk(t)$. 
Consequently, $\tS_n(t)=\sum_{k=0}^{\infty}k\tV\nk(t)$ is a
semimartingale with drift $-\sum_{k=0}^{\infty}k^2\tV\nk(t)$, and
$\tS_n(t)-\E\tS_n(t)$ is a semimartingale with drift
$\xi(t)\=-\sum_{k=0}^{\infty}k^2(\tV\nk(t)-\E\tV\nk(t))$.

We have, 
noting that
$\tV\nk(t)$ are independent and $\tV\nk(t) \sim\Bin(n_k,{\mathrm e}^{-kt})$ for
each $k$,
\begin{equation}\label{jb1}
  \begin{split}
\E|\tS_n(t)-\E\tS_n(t)|^2
    &=\sum_{k=0}^{\infty}\Var(k\tV\nk(t))
    =\sum_{k=0}^{\infty}k^2\Var(\tV\nk(t))
 \\&
    =\sum_{k=0}^{\infty}k^2n_k{\mathrm e}^{-kt}(1-{\mathrm e}^{-kt})
    \le\sum_{k=0}^{\infty} n_k k^2 \bigpar{kt\land (kt)^{-1}}.
  \end{split}
\end{equation}
Similarly
\begin{equation}\label{jb2}
  \begin{split}
\E|\xi(t)|^2
    &=\sum_{k=0}^{\infty}\Var(k^2\tV\nk(t))
    =\sum_{k=0}^{\infty}k^4\Var(\tV\nk(t))
    =\sum_{k=0}^{\infty}k^4n_k{\mathrm e}^{-kt}(1-{\mathrm e}^{-kt})
    \\&
    \le\sum_{k=0}^{\infty} n_k k^4 \e^{-kt}(1\land kt).
  \end{split}
\end{equation}
Hence, for some constant $C_1$,
\begin{equation}
  \begin{split}
\sum_{j=0}^\infty\E|\tS_n(2^{-j}u)-\E\tS_n(2^{-j}u)|^2
&
\le\sum_{k=0}^{\infty} n_k k^2
\sum_{j=0}^\infty\bigpar{2^{-j}ku\land (2^{-j}ku)^{-1}}.
\\&
\le\CC\sum_{k=0}^{\infty} n_k k^2 \xpar{ku\land 1}
  \end{split}
\end{equation}
and
\begin{equation*}
\int_0^u t \E \bigsqpar{\xi(t)}^2\dd t
 \le\sum_{k=0}^{\infty} n_k k^4 \int_0^u \e^{-kt}(t\land kt^2) \dd t
 \le\sum_{k=0}^{\infty} n_k k^2 (1\land (ku)^3).
\end{equation*}
Consequently, \refL{LX} yields
\begin{equation}\label{jb4}
  \begin{split}
    \E\Bigl[\sup_{t\le u}|\tS_n(t)-\E\tS_n(t)|^2\Bigr]
    &
\le\CC\sum_{k=0}^{\infty} n_k k^2 \xpar{ku\land 1}
+ \CC\sum_{k=0}^{\infty} n_k k^2 \xpar{(ku)^3\land 1}
\\&
\le\CC\sum_{k=0}^{\infty} n_k k^2 \xpar{ku\land 1} \CCdef\CCjb
=\CCx n \E \bigpar{D_n^2((uD_n)\land 1)}.
  \end{split}
\end{equation}
This yields \eqref{bs0}.

We obtain \eqref{bv0} similarly; the estimates are the same, but with
smaller powers of $k$, which can only help us.

Moreover, by a similar argument (but without having to sum over $k$), or
by \JL{Lemma 6.1} (with a modification for $u>1$),
	\begin{equation}\label{elfstrand}
	\E\Bigsqpar{ \sup_{t\le u}\lrabs{\LL_n(t)-\E \LL_n(t)}^2}
	\le \CC n(u\land1)
	\end{equation}
and thus, by \eqref{serrander},
	\begin{equation}\label{elfbrink}
	\E\Bigsqpar{ \sup_{t\le u}\lrabs{L_n(t)-\E L_n(t)}^2}
	\le \CC n(u\land1)+\CC.
	\end{equation}
By definition, $\tA_n(t)=L_n(t)-\tS_n(t)$,
and thus \eqref{ba0} follows by combining \eqref{elfbrink} and \eqref{bs0},
noting that $\E D_n^2(uD_n\land 1)\ge \P (D_n=1)(u\land 1)$ and
$\P(D_n=1)\to\P(D=1)>0$ by \refR{Rneq2}.
\end{proof}

\begin{lemma}[Concentration of $\tS_n(t), \tA_n(t), \tV_n(t)$]
\label{LXC1}
Assume \refAApos.
Let, as above, $\ga_n$ be given by \eqref{svaga}. Then,
for any fixed $t_0$,
    \begin{align}
    \sup_{t\le t_0}\lrabs{\tS_n(\gant)-\E\tS_n(\gant)}
&=\Op\bigpar{(n\eps_n)^{1/2}},
    \label{bs}\\
    \sup_{t\le t_0}\lrabs{\tV_n(\gant)-\E\tV_n(\gant)}
&=\Op\bigpar{(n\eps_n)^{1/2}},
    \label{bv}\\
    \sup_{t\le t_0}\lrabs{\tA_n(\gant)-\E\tA_n(\gant)}
&=\Op\bigpar{(n\eps_n)^{1/2}+1}.
    \label{ba}
    \end{align}
\end{lemma}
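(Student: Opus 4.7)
The plan is to reduce \refL{LXC1} directly to \refL{LXC0} by applying the latter with $u = \ga_n t_0$, then controlling the resulting bound using \refL{Lrho}. The key observation is that \refL{LXC0} delivers bounds of the form $O\bigpar{n\, \E(D_n^2(1 \wedge u D_n))}$ on second moments of suprema, and that $D_n^2(1 \wedge u D_n) = D_n^2 \wedge (u D_n^3)$ is precisely the object that \eqref{sard} in \refL{Lrho} identifies (up to constants) with $\vep_n$ when $u = \ga_n$.

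First I would record an elementary inequality: for $c, x \ge 0$,
\begin{equation}
1 \wedge (cx) \le (1 \vee c)(1 \wedge x),
\end{equation}
which is a trivial case split on whether $x \ge 1$ or $x < 1$. Applying this with $c = t_0$ and $x = \ga_n D_n$ gives
\begin{equation}
\E\bigpar{D_n^2(1 \wedge \ga_n t_0 D_n)} \le (1 \vee t_0)\, \E\bigpar{D_n^2(1 \wedge \ga_n D_n)} \asymp \vep_n,
\end{equation}
where the second step is \eqref{sard} and the absorbed constant is allowed to depend on the fixed $t_0$.

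Next I would invoke \refL{LXC0} with $u = \ga_n t_0$ and use the substitution $s = \ga_n t$ to obtain
\begin{equation}
\E\Bigsqpar{\sup_{t\le t_0}\lrabs{\tS_n(\gant) - \E \tS_n(\gant)}^2} = O(n\vep_n),
\end{equation}
and likewise for $\tV_n$ and for $\tA_n$ (with an additional $O(1)$ term in the $\tA_n$ case, coming from the ``${}+C$'' in \eqref{ba0}). Finally, Markov's inequality applied to these nonnegative supremum squared deviations converts the second-moment bounds into the in-probability statements \eqref{bs}, \eqref{bv}, \eqref{ba}; the extra ``${}+1$'' in \eqref{ba} comes directly from the surplus $O(1)$ in the $\tA_n$ case.

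I do not anticipate any real obstacle: \refL{LXC0} does all the martingale work via \refL{LX}, while \refL{Lrho} packages the barely-supercritical branching-process input needed to turn the abstract quantity $\E(D_n^2(1 \wedge \ga_n D_n))$ into the explicit scale $\vep_n$. The only point that needs a moment of care is verifying that, because $t_0$ is fixed, the constants produced by the $1 \vee t_0$ factor do not interact with the asymptotics in $n$, which is evident from the argument above.
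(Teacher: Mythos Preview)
Your proposal is correct and follows essentially the same approach as the paper: apply \refL{LXC0} with $u=\ga_n t_0$, use the elementary bound $1\land(t_0 x)\le(1\vee t_0)(1\land x)$ together with \eqref{sard} to get $\E(D_n^2(1\land uD_n))=O(\eps_n)$, and conclude via Markov's inequality. The paper's proof is slightly more terse but identical in substance.
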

\begin{proof}
  Taking $u=\gan t_0$, we obtain by \eqref{sard},
\begin{equation}
  \E \bigpar{D_n^2(1\land u D_n)}
\le (1\vee t_0)\E \bigpar{D_n^2(1\land \gan D_n)}
=O(\eps_n).
\end{equation}
Thus the \rhs{s} of \eqref{bs0}--\eqref{bv0} and \eqref{ba0} are
$O(n\eps_n)$ and $O(n\eps_n+1)$, respectively; hence
\eqref{bs}--\eqref{ba} follow using Markov's inequality.
\end{proof}

The final three lemmas provide further estimates of the quantities
$\alpha_n$ and $\gam_n$ defined in~\eqref{svaga} and~\eqref{svagam}.

\begin{lemma}
  \label{Lredlund}
Assume \refAApos.
If $\ga_n\gD_n=O(1)$, then
\begin{align}
\rho_n&\sim \ga_n \asymp \frac{\eps_n}{\E D_n^3}, \label{reda}
\\
  \gam_n & \asymp \ga_n\eps_n \asymp \frac{\eps_n^2}{\E D_n^3}.\label{redb}
\end{align}
\end{lemma}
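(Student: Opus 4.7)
The plan is to reduce everything to \refL{Lrho} via the elementary observation that, when $x$ ranges over a bounded interval $[0,C]$, one has $1 \land x \asymp x$ and hence $(1\land x)^2 \asymp x^2$. The hypothesis $\ga_n \Delta_n = O(1)$ tells us precisely that $\ga_n D_n$ is bounded a.s.\ (uniformly in $n$), so we can apply this remark inside the expectations defining both $\eps_n$ (through \eqref{sard}) and $\gam_n$ (through \eqref{svagam}).

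First, the relation $\rho_n \sim \ga_n$ is immediate from \refL{Lrho}, so that part needs no work. Next, fix $C$ with $\ga_n \Delta_n \le C$ for all large $n$. Then a.s.\ $\ga_n D_n \in [0, C]$, which gives $1 \land \ga_n D_n \asymp \ga_n D_n$ pointwise (with implicit constants depending only on $C$). Substituting into \eqref{sard}, and writing $D_n^2 \land (\ga_n D_n^3) = D_n^2 (1 \land \ga_n D_n)$,
\begin{equation*}
\eps_n \asymp \E\bigpar{D_n^2(1 \land \ga_n D_n)} \asymp \ga_n \E D_n^3,
\end{equation*}
which rearranges to $\ga_n \asymp \eps_n/\E D_n^3$, proving \eqref{reda}. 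Doing the same on \eqref{svagam},
\begin{equation*}
\gam_n = \E\bigpar{D_n (1 \land \ga_n D_n)^2} \asymp \ga_n^2 \E D_n^3 = \ga_n \cdot (\ga_n \E D_n^3) \asymp \ga_n \eps_n,
\end{equation*}
and then substituting $\ga_n \asymp \eps_n/\E D_n^3$ once more gives $\gam_n \asymp \eps_n^2/\E D_n^3$, establishing \eqref{redb}.

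There is essentially no obstacle; the only nuance is to record that $\E D_n^3$ is bounded away from zero (e.g.\ $\E D_n^3 \ge \E D_n = \mu_n \to \mu > 0$ by \refR{RD2}, so the ratios $\eps_n/\E D_n^3$ and $\eps_n^2/\E D_n^3$ are well-defined sequences of the right scale) and that the implicit constants in the asymptotic equivalence $1 \land x \asymp x$ on $[0, C]$ depend only on $C$, so they stay uniform in $n$. Apart from this bookkeeping, the argument is a one-line consequence of \refL{Lrho} and the definition of $\gam_n$.
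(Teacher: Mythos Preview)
Your proof is correct and essentially identical to the paper's: both hinge on the observation that $\ga_n D_n \le \ga_n\gD_n = O(1)$ implies $(1\land \ga_n D_n)\asymp \ga_n D_n$, then feed this into \eqref{sard} to get $\eps_n\asymp \ga_n\E D_n^3$ and into \eqref{svagam} to get $\gam_n\asymp \ga_n^2\E D_n^3\asymp \ga_n\eps_n$, with $\rho_n\sim\ga_n$ coming from \eqref{garho2}. Your additional remark that $\E D_n^3$ is bounded below is not strictly needed for the $\asymp$ statements but does no harm.
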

\begin{proof}
  We have $\ga_n D_n\le \ga_n\gD_n =O(1)$, and thus
  \begin{equation}\label{red}
(1\land\ga_n D_n) \asymp \ga_n D_n.
  \end{equation}
Hence \eqref{sard} implies
\begin{equation}\label{redx}
  \eps_n \asymp \E(\ga_n D_n^3)
\end{equation}
and \eqref{reda} follows, recalling \eqref{garho2}.

Furthermore, \eqref{red} and \eqref{svagam} yield, using
\eqref{redx},
\begin{equation}
  \gam_n \asymp \E (\ga_n^2D_n^3) = \ga_n\E (\ga_nD_n^3) \asymp \ga_n\eps_n,
\end{equation}
showing  \eqref{redb}.
\end{proof}

\begin{lemma}\label{Lmarta}
Assume \refAApos.
If
\begin{align}
  (n\eps_n)\qq &= o(n\gam_n)\label{elfeps},
\end{align}
then \ref{Xdmax} holds, \ie,
\begin{align}
\gD_n&=o(n\gam_n) \label{elfgd}.
\end{align}
\end{lemma}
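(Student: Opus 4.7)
The plan is to bound $\gD_n$ from above by combining \eqref{sard} with the trivial lower bound $\P(D_n=\gD_n)\ge 1/n$. Singling out the contribution of a vertex of maximum degree to $\E\bigpar{D_n^2\land (\ga_n D_n^3)}$ and applying \eqref{sard}, I get a constant $C$ such that
\begin{equation*}
\gD_n^2\land (\ga_n\gD_n^3)\le Cn\eps_n.
\end{equation*}

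I would then split on whether $\ga_n\gD_n\ge 1$ or $\ga_n\gD_n\le 1$. In the first regime the minimum on the left equals $\gD_n^2$, so $\gD_n\le (Cn\eps_n)\qq$, which is $o(n\gamn)$ directly from the hypothesis \eqref{elfeps}. In the second regime, $\gD_n\le 1/\ga_n$, and since $\ga_n\gD_n=O(1)$ here, \refL{Lredlund} applies and yields $\gamn\asymp\gan\eps_n$; equivalently $1/\gan\asymp\eps_n/\gamn$, so $\gD_n\le C'\eps_n/\gamn$ for some constant $C'$. Squaring \eqref{elfeps} and dividing by $n$ gives $\eps_n=o(n\gamn^2)$, hence $\eps_n/\gamn=o(n\gamn)$, which settles this case.

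Since at every $n$ one of the two regimes holds, the pointwise estimate
\begin{equation*}
\gD_n\le (Cn\eps_n)\qq+C'\eps_n/\gamn
\end{equation*}
is valid for all $n$, and both summands on the right-hand side are $o(n\gamn)$, yielding \eqref{elfgd}.

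I do not expect a serious obstacle in this argument. The main conceptual point is the dichotomy on $\ga_n\gD_n$: in the bounded regime one must invoke the sharp relation $\gamn\asymp\gan\eps_n$ from \refL{Lredlund}, whereas in the unbounded regime the cruder pigeonhole bound $\gD_n^2\le Cn\eps_n$ coming from \eqref{sard} already suffices. Patching the two estimates into a single uniform bound is then routine bookkeeping.
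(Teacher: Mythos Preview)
Your proof is correct and essentially identical to the paper's: both split on whether $\ga_n\gD_n\le 1$ or $\ga_n\gD_n\ge 1$, use $\gD_n\le 1/\ga_n$ together with \refL{Lredlund}'s relation $\gam_n\asymp\ga_n\eps_n$ in the first case, and the pigeonhole bound $\gD_n^2\le Cn\eps_n$ from \eqref{sard} in the second. The only cosmetic difference is that you state the pigeonhole bound upfront (as $\gD_n^2\land(\ga_n\gD_n^3)\le Cn\eps_n$) before splitting, and at the end patch the two cases into a single uniform inequality, whereas the paper treats the cases separately throughout.
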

\begin{proof}
Suppose first that $\ga_n\gD_n\le1$. Then, using \eqref{redb}, 
\begin{equation}
  \frac{\gD_n}{n\gam_n}
\le  \frac{1}{n\gam_n\ga_n}
= O\Bigpar{\frac{\eps_n}{n\gam_n^2}}
= O\Bigpar{\frac{n\eps_n}{(n\gam_n)^2}},
\end{equation}
and thus \eqref{elfgd} follows from \eqref{elfeps} in this case.

Suppose next that $\ga_n\gD_n\ge1$.
Since $\P(D_n=\gD_n)\ge 1/n$, we have by \eqref{sard}
	\begin{equation}
	\eps_n\asymp
  	\E\bigpar{D_n^2(1\land (\ga_nD_n))}
	\ge \frac{1}n \gD_n^2(1\land (\ga_n\gD_n))
	=\frac{\gD_n^2}{n}.
	\end{equation}
Consequently, $\gD_n=O\bigpar{(n\eps_n)\qq}$, and thus \eqref{elfeps}
implies \eqref{elfgd} in this case too.
\end{proof}

\begin{lemma}\label{Lolof}
Assume \refAApos.
Then
	\begin{equation}\label{ulla}
\eps_n^2 = O\bigpar{\gam_n \E D_n^3}.	
	\end{equation}
\end{lemma}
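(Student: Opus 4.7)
The plan is to deduce \eqref{ulla} directly from the characterization of $\eps_n$ given in \refL{Lrho} together with a single application of the Cauchy--Schwarz inequality. By \refL{Lrho} (specifically the second form of \eqref{sard} written with $\ga_n$), there is a constant $C$ such that
\begin{equation*}
\eps_n \le C\,\E\bigpar{D_n^2 \land (\ga_n D_n^3)} = C\,\E\bigpar{D_n^2(1\land \ga_n D_n)},
\end{equation*}
where the equality is the pointwise identity $D_n^2\land(\ga_nD_n^3)=D_n^2(1\land\ga_nD_n)$.

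The key step is to factor the integrand as
\begin{equation*}
D_n^2(1\land \ga_n D_n) = D_n^{3/2}\cdot D_n^{1/2}(1\land \ga_n D_n)
\end{equation*}
and apply Cauchy--Schwarz, which gives
\begin{equation*}
\bigpar{\E D_n^2(1\land \ga_n D_n)}^2
\le \E D_n^3 \cdot \E\bigpar{D_n(1\land \ga_n D_n)^2}
= \gam_n\,\E D_n^3,
\end{equation*}
where the last equality is the definition \eqref{svagam} of $\gam_n$. Combining these two displays yields $\eps_n^2 \le C^2 \gam_n\,\E D_n^3$, which is \eqref{ulla}.

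There is no genuine obstacle here: the only ingredients are \refL{Lrho} (already proved) and the elementary Cauchy--Schwarz bound. The proof is essentially immediate once one observes the correct way to split $D_n^2(1\land\ga_nD_n)$ into two factors whose squares reproduce $\E D_n^3$ and $\gam_n$ respectively.
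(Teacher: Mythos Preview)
Your proof is correct and is essentially identical to the paper's own argument: both use \eqref{sard} from \refL{Lrho} to bound $\eps_n$ by $\E\bigpar{D_n^2(1\land\ga_nD_n)}$, apply Cauchy--Schwarz with the factorization $D_n^{3/2}\cdot D_n^{1/2}(1\land\ga_nD_n)$, and identify the resulting factors as $\E D_n^3$ and $\gam_n$ via \eqref{svagam}.
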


\begin{proof}
 The Cauchy--Schwarz inequality yields, using \eqref{svagam},
  \begin{equation}
\bigpar{\E\bigpar{D_n^2(1\land\ga_nD_n)}}^2
\le
\E\bigpar{D_n(1\land\ga_nD_n)^2}
\E\bigpar{D_n^3}
=O\bigpar{\gam_n\E D_n^3}.
  \end{equation}
Hence the result follows by \eqref{sard}.
\end{proof}

\begin{proof}[Proof of Theorem \ref{T1}]
First note that \ref{Xtau}--\ref{Xpsi} hold for the
parameter values in~\eqref{svatau}--\eqref{svapsi} by \refL{L123}.

Next, by \refL{Lolof},
\begin{equation}
  \begin{split}
\frac{n\eps_n}{(n\gam_n)^2}
=\frac{\eps_n^4}{\gam_n^2 n\eps_n^3}
=O\lrpar{\frac{(\E D_n^3)^2}{n\eps^3_n}},	
  \end{split}
\end{equation}
which is $o(1)$ by the assumption. Hence \eqref{elfeps} holds.
Consequently,
\refL{Lmarta}
shows that \eqref{elfgd} holds. In other words, \ref{Xdmax} holds.

Since $\gD_n\ge1$, \eqref{elfgd} implies
\begin{equation}
  \label{elf1}
n\gam_n\to\infty,
\end{equation}
and thus
\refL{L-E-SVA} applies and shows \eqref{e:mean-S}--\eqref{e:mean-A}.

Moreover, \eqref{elfeps} and \eqref{elf1} imply that the
right-hand sides of \eqref{bs}--\eqref{ba} are $\op(n\gam_n)$.
Furthermore, $\gam_n=O(\ga_n)$ by
\ref{Xag}, see \eqref{gamga}.
Hence Lemmas \ref{L-E-SVA} and \ref{LXC1} yield \ref{XA}, \ref{XV} and \ref{XS}.

We have verified \ref{Xfirst}--\ref{Xlast}, so
\refT{TX} applies and the result follows,
recalling \eqref{svatau}, \eqref{svagh}, \eqref{mu} and  \eqref{garho2}.
Note that $\hg(\tau)=\hh(\tau)/2$, so the asymptotics for $\vx{\cC_1}$ and
$\ex{\cC_1}$ are the same.
\end{proof}

\begin{proof}[Proof of Theorem \ref{TD3}]
By assumption, $\E D_n^3=O(1)$ and $\eps_nn\qqq\to\infty$, so \refT{T1}
applies;
thus \eqref{t11} holds.
Furthermore, as said in \refSS{SSrho},
\refT{Tsurv}\ref{tsurv1} applies with $X_n = \tilde{D}_n$ and yields \eqref{ganD3},
which together with \eqref{mu} yields the first equality in \eqref{tdn3};
the second equality then follows by \eqref{eps}.
Similarly, \eqref{t12} and
\eqref{ganD3} (or \eqref{rhoO}) yield \eqref{td32}.
\end{proof}

\begin{proof}[Proof of \refT{TD3infty}]
  Again, \refT{T1} applies.
Moreover,  by \eqref{ftD}, we have
$\E \tD^2=\E \bigpar{D(D-1)^2}/\E D=\infty$,  and so
\refT{Tsurv}\ref{tsurv2} applies, yielding
$\rho_n=o(\eps_n)$.
\end{proof}

\begin{proof}[Proof of \refT{TDx}]
\refT{T1} applies.
\pfitemref{TDxa} Follows from \eqref{t11}, \eqref{rho>} for $X_n = \tilde{D}_n$ and \eqref{kkn}.

\pfitemref{TDxb}
Now, by \eqref{ftD}, $\E \tD_n^2\le \E D_n^3/\E D_n=O(1)$.
Hence \refT{Tsurv}\ref{tsurv0} applies and yields
$\rho_n\asymp \eps_n$; consequently \eqref{t11} implies \eqref{ele}.

\pfitemref{TDxc}
By \eqref{kknR} and \eqref{kkn},
$\E D_n^3=O(\kk_n)=O\bigpar{\E[\tD_n(\tD_n-1)]}=O\bigpar{\E[\tD_n^2]}$.
Thus, the assumption implies
$\eps_n\gD_n=o\bigpar{\E[\tD_n^2]}$.
Hence, \refT{Tsurv}\ref{tsurv3} applies and
\eqref{win} follows by \eqref{kkn} and \eqref{kknR}.
\end{proof}

\section{The critical case}
\label{Scritical}

We define, for convenience and for comparison with \citet{HatamiMolloy},
	\begin{equation}\label{Rn}
  	R_n:=\E D_n^3.
	\end{equation}
The basic condition for the critical case in \refT{TC} is thus, as in
\cite{HatamiMolloy},
	\begin{equation}
  	\label{epscrit}
	\eps_n = O\bigpar{n\qqqw R_n^{2/3}}.
	\end{equation}

\begin{remark}
  Our $R_n$ is not exactly the same as $R$ defined by \citet{HatamiMolloy},
which equals our $\E D_n(D_n-2)^2/\E D_n=\kk_n-\eps_n$,
see \eqref{kkn} and \eqref{eps},
but the two values are equivalent in the sense
$R_n\asymp  R_{\textrm{HatamiMolloy}}$, see \eqref{kknR} and \eqref{kkn>};
hence the two values are equivalent for our purposes.
\end{remark}

Note that, as said in \refR{RgDo}, $R_n\ge \frac{1}n\gD_n^3$ and hence always
	\begin{equation}\label{gDR}
  	\gD_n\le (nR_n)\qqq.
	\end{equation}
Note also that in \refT{TC} we impose the slightly stronger condition \eqref{gDo},
\ie,
	\begin{equation}\label{gDRo}
  	\gD_n=o\bigpar{ (nR_n)\qqq}.
	\end{equation}
Furthermore, by \ref{A2},
  	\begin{equation}\label{RnqqO}
	R_n = \E D_n^3\le \gD_n\E D_n^2 = O(\gD_n).
  	\end{equation}
Hence, \eqref{gDRo} implies
$ 
\gD_n^3=o(nR_n)=o(n\gD_n)
$ 
and thus $\gD_n^2=o(n)$ and
	\begin{equation}\label{gDqqo}
  	\gD_n=o\bigpar{n\qq},
	\end{equation}
and thus also, by \eqref{RnqqO},
  \begin{equation}\label{Rnqqo}
R_n = o\bigpar{n\qq}.
  \end{equation}

In \refT{TC} we assume both \eqref{epscrit} and \eqref{gDRo}, and it follows
from \eqref{epscrit} and \eqref{Rnqqo} that
$\eps_n=o(1)$, so \ref{AO} follows from the other conditions. (However, for
emphasis we keep it in the statements in \refT{TC} and below.)

Note also that, using \eqref{EDn2} and \eqref{mu},
$ 
R_n\ge\E D_n^2\to 2\mu>0,
$ 
so $R_n$ is bounded below and $R_n\qw=O(1)$.

We continue to work with the configuration model and the multigraph $G_n^*$
as in the preceding section. In \refSS{SSTC>graph} we give additional
arguments for the graph case.

\subsection{Proof of \refT{TC}\ref{TC<}}
The idea is to use \refT{T1} for the supercritical case and a kind of
monotonicity in $\eps_n$; it is intuitively clear that a larger $\eps_n$ ought
to result in a larger largest component, and thus the supercritical case
will provide an upper bound for the critical case.
The formal details are as follows.

\begin{proof}[Proof of \refT{TC}\ref{TC<}]
Let $\go(n)\to\infty$ slowly, so slowly that,
\cf{} \eqref{Rnqqo} and \eqref{gDRo},
\begin{align}
  \go(n)R_n &=o\bigpar{n\qq},\label{go1}
\\
\go(n)\gD_n &\le (nR_n)\qqq.\label{go2}
\end{align}

Let $m_n:=\floor{n^{2/3} R_n^{2/3}\go(n)\qqqb}$. Change the degree sequence
$(d_i)_{i\in[n]}$  to $(\hd_i)_{i\in[n]}$ by replacing $2m_n$ vertices of degree
1 by $m_n$ vertices of degree 0 and $m_n$ vertices of degree 2.
This is possible (at least for large $n$)
because $n_1/n=\P(D_n=1)\to\P(D=1)>0$, see \refR{Rneq2}, and
thus, using \eqref{go1},
	\begin{equation}\label{gom}
  	m_n\le n^{2/3} (R_n\go(n))\qqqb =o(n)=o(n_1).
	\end{equation}

We denote the variables for the modified degree sequence by $\hD_n$ and so
on.
Note that the modification does not change the sum of vertex degrees, so
$\E\hD_n=\E D_n=\mu_n$, but it increases $\E[D_n(D_n-1)]$ by
$2m_n/n\sim 2 n^{-1/3}R_n^{2/3}\go(n)\qqqb$.
Thus, using \eqref{epscrit} and $\go(n)\to\infty$,
	\begin{equation}\label{hepsn}
  	\hat\eps_n = \eps_n+2m_n/n \sim 2 n^{-1/3}R_n^{2/3}\go(n)\qqqb.
	\end{equation}
Similarly, $R_n=\E D_n^3$ is increased to
\begin{equation}\label{hR}
  \hat R_n = \E \hD_n^3 =R_n + \frac{6m_n}n =R_n+o(1)\sim R_n,
\end{equation}
where we used \eqref{gom} to see that the difference is insignificant.
Furthermore, it is easily seen that \refAA{} still hold (with the same $D$),
using \eqref{gom} and \eqref{hepsn}
for \ref{AA} and \ref{AO}.

Since \eqref{hepsn} and \eqref{hR} imply $\hat\eps_n\gg n\qqqw \hR_n^{2/3}$,
and \eqref{hepsn} and \eqref{go2} imply $\hat\eps_n\gD_n=o(R_n)$,
\refT{TDx}\ref{TDxc} applies to the modified degree sequence and yields, \whp,
\begin{equation}\label{asa}
\vx{\hat\cC_1}\le C'\frac{\hat\eps_n n}{R_n}
=o\bigpar{n\qqqb R_n\qqqw \go(n)}.
\end{equation}
In particular, \whp{}
\begin{equation}
  \label{ronja}
\vx{\hat\cC_1}\le  n\qqqb R_n\qqqw \go(n).
\end{equation}

We can obtain
$G^*(n,\hddn)$ from $G^*(n,\ddn)$ by merging $m_n$ pairs of vertices of
degree 1 into vertices of degree 2, and adding $m_n$ vertices of degree 0 to
keep the total number of vertices. Any connected set $\cC$ of vertices in
$G^*(n,\ddn)$ then corresponds to a connected set of at least $\vx{\cC}/2$
vertices in $G^*(n,\hddn)$. Consequently, $\vx{\hat\cC_1}\ge\frac12\vx{\cC_1}$
and thus \eqref{ronja} implies,
\whp,
\begin{equation}\label{johan}
  \vx{\cC_1}\le
2\vx{\hat\cC_1}\le
2 n\qqqb R_n\qqqw \go(n).
\end{equation}

Since $\go(n)\to\infty$ arbitrarily slowly, \eqref{johan} implies
$\vx{\cC_1}=\Op(n\qqqb R_n\qqqw)$.
(If not, we could find $\gd>0$ and $K=K(n)\to\infty$ such that, at least
along a subsequence, $\P \bigpar{\vx{\cC_1} \ge K(n) n^{2/3} R_n^{-1/3}} \ge \delta$.
We choose $\go(n)$ with
$\go(n)\le K(n)/2$ to obtain a contradiction.
See also \cite{SJN6}.) This completes our proof of \eqref{tc<}.
\end{proof}

\begin{remark}
In our proof we needed only the simple, deterministic bound
$\vx{\cC_1}\le 2\vx{\hat\cC_1}$. Actually, when \refT{TC}\ref{TC<} is  proved,
it implies together with
\refT{TDx}\ref{TDxc} that \whp{} $\vx{\cC_1}\ll \vx{\hat\cC_1}$, \ie, that
the giant component $\hat\cC_1$ for the modified sequence \whp{}
is much larger
than $\cC_1$ for the original sequence; the reason is that, in the merging
described above, the giant component typically absorbs many small components.
\end{remark}

\begin{example}\label{Efel}
Consider a critical example with $\eps_n=O(n\qqqw)$, $R_n=O(1)$ and
$\gD_n=o(n\qqq)$. For example (as in \cite{HatamiMolloy}), we can
let $3/4$ of all vertices have degree 1 and the rest degree 3. Alternatively,
we can take the \ER{} graph $G(n,1/n)$ and condition on the degree
sequence, as described for general rank-1 inhomogeneous random graphs
in \refS{sec-discussion}. Then $v(\cC_1)$ is typically of order $n\qqqb$,
see \refT{TC} and \cite{HatamiMolloy}.

Let $m_n$ be integers with $n\qqq\ll m_n\ll n\qq$.
Modify the degree sequence $\ddn$ to $\hddn$ by merging $m_n$ vertices of
degree 1 to a single vertex of degree $m_n$, and adding $m_n-1$ vertices of
degree 0. Then it is easily seen that $\hat\eps_n\asymp m_n^2/n$, $\hR_n\sim
m_n^3/n$ and $\hat\gD_n=m_n$. Thus \eqref{epscrit} holds for the modified
sequence but not \eqref{gDRo}. Furthermore,
\begin{equation}
  \vx{\hat\cC_1}
\ge   \vx{\cC_1}-m_n
\end{equation}
so $  \vx{\hat\cC_1}$ is typically also of order (at least) $n\qqqb$. Hence,
\eqref{tc<} fails.
\end{example}

\subsection{Proof of \refT{TC}\ref{TC>} in the multigraph case}
In this section, we consider only the multigraph case.
Unlike all other results in this paper, the graph case does not follow
immediately by conditioning. We treat the graph case in the next section.

We use the cluster exploration process and notation from \refS{SSgeneral}.
Let
	\begin{equation}\label{t1}
  	t_1:=\bigpar{nR_n}\qqqw,
	\end{equation}
and note that $t_1=O(n\qqqw)=o(1)$ and, by \eqref{Rnqqo}, $t_1\gg n^{-1/2}$ and thus
$nt_1\to\infty$
and $nt_1^2\to\infty$.
Furthermore, let
	\begin{equation}
  	\gss_n:=\Var \tS_n(t_1).
	\end{equation}

\begin{lemma}\label{LB1}
Assume \refAA{} and \eqref{gDRo}.
\begin{romenumerate}
\item \label{LB1S}
Then
\begin{equation}\label{gssR}
\gss_n
\sim
\bigpar{nR_n}^{2/3}.
 \end{equation}
Moreover, $\tS_n(t_1)$ is asymptotically normal:
\begin{equation}\label{asnS}
\bigpar{\tS_n(t_1)-\E\tS_n(t_1)}/\gs_n\dto N(0,1).
\end{equation}

\item \label{LB1L}
Let $\gsLn^2:=4nt_1\mu_n$. Then
$L_n(t_1)$ is asymptotically normal, with
\begin{equation}\label{asnL}
\bigpar{L_n(t_1)-\E L_n(t_1)}/\gsLn\dto N(0,1).
\end{equation}
Furthermore, $\limsup \gsLn^2/\gss_n<1$.
\item \label{LB1A}
For any $b>0$, there exists $c(b)>0$ such that
\begin{equation}\label{Ssmall}
\P\bigpar{\tA_n(t_1)-\E\tA_n(t_1)>b\gs_n}\ge c(b)+o(1).
\end{equation}
\end{romenumerate}
\end{lemma}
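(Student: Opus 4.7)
My plan for (i) is to exploit the independence of the $\tV_{n,k}(t_1)\sim\Bin(n_k, e^{-kt_1})$ across $k$. The variance identity $\gss_n = \sum_k k^2 n_k e^{-kt_1}(1-e^{-kt_1})$ will then be treated by Taylor expansion: since $\gD_n t_1 = o(1)$ by \eqref{gDo}, uniformly in $k\le\gD_n$ we have $e^{-kt_1}(1-e^{-kt_1}) = kt_1(1+O(kt_1))$, so $\gss_n = t_1 n R_n + O(t_1^2\gD_n n R_n) = (nR_n)^{2/3}(1+o(1))$, giving \eqref{gssR}. For \eqref{asnS}, I decompose $\tS_n(t_1) = \sum_{i=1}^n X_i$ with $X_i := d_i\1_{A_i}$, where $A_i$ is the event that all $d_i$ half-edges at vertex~$i$ survive past~$t_1$; the $X_i$ are independent and bounded by $\gD_n$, so $\sum_i \E|X_i-\E X_i|^3 \le \gD_n\gss_n$ and Lyapunov's ratio is dominated by $\gD_n/\gs_n \asymp \gD_n/(nR_n)^{1/3}$, which is $o(1)$ by \eqref{gDo}.

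For (ii), I use the coupling $|L_n(t)-\LL_n(t)|\le 1$ of \eqref{serrander}, with $\LL_n(t)/2$ a pure death process of rate $2$ starting at $\ell_n/2$, so $\LL_n(t_1)/2\sim\Bin(\ell_n/2, e^{-2t_1})$. Because $nt_1 = n^{2/3}R_n^{-1/3}\to\infty$ (using $R_n=o(n^{1/2})$ from \eqref{Rnqqo}), the classical binomial CLT applies; the $O(1)$ coupling error is negligible against $\gsLn\to\infty$, establishing \eqref{asnL}. For the strict inequality, a short computation gives $\gsLn^2/\gss_n \sim 4\mu/R_n$, so the assertion reduces to $\liminf R_n > 4\mu$. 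Expanding
\[
R_n = \E[D_n(D_n-1)(D_n-2)]+3\E D_n^2 - 2\E D_n = \kk_n\mu_n + 3\mu_n(2+\eps_n) - 2\mu_n \sim \mu(\kk_n+4),
\]
and invoking \eqref{kkn>}, which under \ref{AD>2} ensures $\liminf\kk_n \ge \E[D(D-2)^2]/\E D > 0$, the strict inequality follows.

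For (iii), set $Y_n := (\tS_n(t_1)-\E\tS_n(t_1))/\gs_n$ and $Z_n := (L_n(t_1)-\E L_n(t_1))/\gsLn$, so $\tA_n(t_1)-\E\tA_n(t_1) = -\gs_n Y_n + \gsLn Z_n$, and by (i)--(ii) each of $Y_n, Z_n$ converges marginally to $N(0,1)$. Let $r_n := \gsLn/\gs_n$; by (ii) I may fix $r_0\in(0,1)$ such that $r_n\le r_0$ for all large $n$. Then for any $M>0$ the inclusion $\{-Y_n > b+M\}\cap\{|Z_n|\le M/r_0\} \subset \{-Y_n + r_n Z_n > b\}$ yields
\[
\P\bigpar{\tA_n(t_1)-\E\tA_n(t_1) > b\gs_n} \ge \P(-Y_n > b+M) - \P(|Z_n| > M/r_0),
\]
whose liminf is $(1-\Phi(b+M))-2(1-\Phi(M/r_0))$. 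Since $r_0<1$, Gaussian tail asymptotics give $(1-\Phi(M/r_0))/(1-\Phi(b+M))\to 0$ as $M\to\infty$ (the controlling exponent is $-(r_0^{-2}-1)M^2/2+bM$), so choosing $M=M(b)$ large enough produces a positive constant $c(b)$.

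The main obstacle is the strict inequality in (ii): the tail-comparison in (iii) collapses without $r_0<1$, since only then can a Gaussian tail of $Y_n$ dominate the deviation allowance on $Z_n$. This strict inequality is precisely where the non-degeneracy of $D$ enters through \eqref{kkn>} and \ref{AD>2}. An alternative argument avoiding the strict separation would require a genuine joint CLT for $(\tS_n(t_1), L_n(t_1))$ together with a direct computation of $\Cov(L_n, \tS_n)$ via the random-pairing representation of the configuration model; this route is considerably more delicate and is sidestepped by the present strategy.
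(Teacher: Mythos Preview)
Your proof is correct and follows essentially the same approach as the paper: Lyapunov's CLT for $\tS_n(t_1)$ via the independent vertex indicators, the binomial CLT for $\LL_n(t_1)$ transferred through the coupling \eqref{serrander}, the strict inequality $\limsup\gsLn^2/\gss_n<1$ via $\liminf R_n>4\mu$ (the paper uses the equivalent \eqref{lyman} rather than \eqref{kkn>}), and in (iii) a union-bound argument from the two marginal CLTs that avoids joint asymptotic normality. The only cosmetic difference is in (iii): the paper parameterises with $a=b/\gd$ and obtains $c(b)=\Phi(-a)-\Phi(-(1+\gd)a)$ directly, whereas you introduce an auxiliary threshold $M$ and appeal to Gaussian tail asymptotics to exhibit positivity; both arguments work for the same reason, namely $r_0<1$.
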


\begin{proof}
\pfitemref{LB1S}
We have, see \refS{SSgeneral} and in particular \eqref{m22},
$\tS_n(t)=\sumin d_i I_i(t)$, where $I_i(t)$ is the indicator that no
half-edge at vertex $i$ has died spontaneously up to time $t$.
These indicators are independent and $I_i(t)\sim \Be(\ee{-d_it})$.
Hence, as in \eqref{jb1} but written slightly differently,
noting that $t_1d_i\le t_1\gD_n=o(1)$ by
\eqref{t1} and \eqref{gDRo},
\begin{equation*}
  \begin{split}
  \Var \tS_n(t_1)=\sumin d_i^2\Var I_i(t_1)
=\sumin d_i^2 \ee{-d_it_1}\bigpar{1-\ee{-d_it_1}}
\sim \sumin d_i^3 t_1
=t_1nR_n = \bigpar{nR_n}^{2/3},	
  \end{split}
\end{equation*}
which is \eqref{gssR}.
Similarly, with $Y_i:=d_iI_i(t_1)$ and using \eqref{gDRo},
\begin{equation}
  \begin{split}
\sumin   \E|Y_i-\E Y_i|^3
&
=\sumin d_i^3\E|I_i(t_1)-\E I_i(t_1)|^3
\le \sumin d_i^3 \Var I_i(t_1)
\le \sumin d_i^4 t_1
\\&
=t_1n\E D_n^4
\le t_1n\gD_nR_n
=o(nR_n) = o\bigpar{\gs_n^3}.
  \end{split}
\end{equation}
Consequently, the central limit theorem with Lyapounov's condition
\cite[Theorem 7.2.2]{Gut}
applies and yields \eqref{asnS}.

\pfitemref{LB1L}
We use the modified process $\LL_n(t)$ defined just before \eqref{EL}.
Then $\frac12\LL_n(t)\sim\Bin\bigpar{\frac12\ell_n,\ee{-2t}}$ for every $t\ge0$.
In particular, recalling from~\eqref{elln} and \eqref{mun-def}
that $\ell_n=n\mu_n$,
\begin{equation}
  \Var \LL_n(t_1) = 4\cdot\tfrac12\ell_n \ee{-2t_1}\bigpar{1-\ee{-2t_1}}
\sim 4\ell_n t_1=\gsLn^2.
\end{equation}
Since $nt_1\to\infty$, we have $\gsLn^2\to\infty$, and the central limit
theorem for the binomial distribution yields
$\bigpar{\LL_n(t_1)-\E \LL_n(t_1)}/\gsLn\dto N(0,1)$.
Since $|L_n(t_1)-\LL_n(t_1)|\le1$
by \eqref{serrander},
\eqref{asnL} follows.

Furthermore,
	\begin{equation}
 	\frac{\gss_n}{\gsLn^2}
	=
 	\frac{\gss_n}{4nt_1\mu_n}
	\sim
	\frac{(nR_n)^{2/3}}{4n(nR_n)^{-1/3}\mu_n}
	=
	\frac{R_n}{4\mu_n}
	=
	\frac{\E D_n^3}{4\E D_n}.
	\end{equation}
Consequently, using \eqref{lyman} (with $t=0$),
\begin{equation}
  \begin{split}
\liminf_\ntoo \frac{\gss_n}{\gsLn^2}
&=
\frac{\liminf \E D_n^3}{4\E D}
\ge
\frac{\E \bigpar{D(D-2)^2}+4\mu}{4\mu}
>1.	
  \end{split}
\end{equation}

\pfitemref{LB1A}
By \ref{LB1L}, there exists $\gd>0$ such that, for large $n$,
$\gsLn<(1-2\gd)\gs_n$. Let $a:=\gd\qw b$ and let $\Phi$ be the usual
standard normal distribution function.
Then, by \eqref{asnS} and \eqref{asnL},
\begin{align}
  \P\bigpar{\tS_n(t_1)-\E\tS_n(t_1)<-a\gs_n}&\to \Phi(-a),
\\
  \P\bigpar{L_n(t_1)-\E L_n(t_1)<-(1+\gd)a\gsLn}&\to \Phi(-(1+\gd)a).
\end{align}
Hence, with probability at least $c+o(1)$, where
$c:=\Phi(-a)-\Phi(-(1+\gd)a)>0$,
we have
$\tS_n(t_1)-\E\tS_n(t_1)<-a\gs_n$
and
$L_n(t_1)-\E L_n(t_1)\ge -(1+\gd)a\gsLn$, and thus, recalling \eqref{m33},
\begin{equation}
  \tA_n(t_1)-\E\tA_n(t_1)>a\gs_n -(1+\gd)a\gsLn
>a\gs_n -(1+\gd)(1-2\gd)a\gs_n >\gd a\gs_n=b\gs_n.
\end{equation}
\end{proof}

\begin{remark}
  Presumably, $\tS_n(t_1)$ and $L_n(t_1)$ are asymptotically {\em jointly} normal,
which would imply that $\tA_n(t_1)$ is asymptotically normal and yield a
more direct proof of \eqref{Ssmall}. However, it seems more technical to
prove joint asymptotic normality here, so instead we prefer the more elementary
argument above.
\end{remark}

\begin{lemma}
\label{LB2a}
Assume \refAA{} and \eqref{epscrit} and \eqref{gDRo}.
Then, uniformly for $t\le t_1$,
\begin{align}
 \E \tS_n(t) &=  n\mu_n -2tn\mu_n +  O(\gs_n),\label{drake}
\\
  \E L_n(t) &= n\mu_n\ee{-2t}+O(1)= n\mu_n -2tn\mu_n + O(\gs_n) ,
\label{drakeL}
\\
  \E \tA_n(t) &=  O(\gs_n). \label{drakeA}
 \end{align}
\end{lemma}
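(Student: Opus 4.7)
The plan is to compute each expectation exactly, exploiting the independence of half-edge lifetimes, and then Taylor-expand to first order in $t$, bounding the remainder by $O(\gs_n)$ using the choice $t_1=(nR_n)\qqqw$ and the critical-window assumption~\eqref{epscrit}. All remainder bounds below are monotone in $t$, so uniformity on $[0,t_1]$ comes for free.

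For~\eqref{drake}, write $\tS_n(t)=\sumin d_i I_i(t)$, where $I_i(t)\sim\Be(\ee{-d_it})$ are independent, so $\E\tS_n(t)=\sumin d_i\ee{-d_it}$. Using $\ee{-x}=1-x+\phii(x)$ with $0\le\phii(x)\le x^2$ (cf.~\eqref{phiixx}),
\begin{equation*}
\E\tS_n(t)=\sumin d_i-t\sumin d_i^2+\sumin d_i\phii(d_it).
\end{equation*}
By~\eqref{EDn2}, $\sumin d_i^2=n\mu_n(2+\eps_n)$, so the first two terms contribute $n\mu_n-2tn\mu_n-tn\mu_n\eps_n$. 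The quadratic remainder is bounded by $t^2\sumin d_i^3=t^2 nR_n\le t_1^2 nR_n=(nR_n)\qqq=O(\gs_n)$ by~\eqref{gssR}. The linear correction satisfies $tn\mu_n\eps_n\le t_1 n\mu_n\eps_n=O\bigpar{t_1 n\cdot n\qqqw R_n\qqqb}=O\bigpar{(nR_n)\qqq}=O(\gs_n)$ by~\eqref{epscrit}.

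For~\eqref{drakeL}, the already-established estimate~\eqref{EL} gives $\E L_n(t)=\ell_n\ee{-2t}+O(1)=n\mu_n\ee{-2t}+O(1)$, which is the first equality. For the second, $n\mu_n(\ee{-2t}-1+2t)\le n\mu_n(2t)^2\le 4n\mu_n t_1^2=O(n\qqq R_n\qqqw)$. Since $R_n\ge\E D_n^2\to 2\mu>0$, we have $R_n\qw=O(1)$, and this error equals $O((nR_n)\qqq)=O(\gs_n)$.

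Finally,~\eqref{drakeA} is immediate from $\tA_n(t)=L_n(t)-\tS_n(t)$ (see~\eqref{m33}): the main terms $n\mu_n-2tn\mu_n$ cancel upon subtracting~\eqref{drake} from~\eqref{drakeL}, leaving $\E\tA_n(t)=O(\gs_n)$. The only obstacle is bookkeeping: each error must be checked to be $O((nR_n)\qqq)$, and~\eqref{epscrit} enters at exactly one point, controlling the correction $tn\mu_n\eps_n$ arising from the fact that $\E D_n^2=2\mu_n+\mu_n\eps_n$ rather than exactly $2\mu_n$.
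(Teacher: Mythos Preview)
Your proof is correct and follows essentially the same approach as the paper: compute $\E\tS_n(t)=\sumin d_i\ee{-d_it}$ (equivalently $\sum_k kn_k\ee{-kt}$), Taylor-expand, use \eqref{EDn2} to split off $n\mu_n-2tn\mu_n$, and bound the $tn\mu_n\eps_n$ term via~\eqref{epscrit} and the $O(t^2nR_n)$ term via $t_1^2nR_n=(nR_n)\qqq\sim\gs_n$; for $L_n$ invoke~\eqref{EL} and the estimate $nt_1^2=n\qqq R_n^{-2/3}=O(\gs_n)$ (using $R_n\qw=O(1)$), and subtract for $\tA_n$. The paper writes the last step as $nt_1^2\sim\gs_nR_n\qw=O(\gs_n)$, but the content is identical.
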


\begin{proof}
Similarly to the proof of \refL{L-E-SVA},
$V\nk(t)\sim \Bin(n_k,\ee{-kt})$ and thus, using \eqref{EDn2},
\begin{equation}\label{mask}
  \begin{split}
\E \tS_n(t)
&= \sumko k\E V\nk(t) =\sumko kn_k \ee{-kt}
= \sumko kn_k\bigpar{1-kt+O(k^2t^2)}
\\&
=n\E D_n-t n\E D_n^2 + O(t^2n\E D_n^3)
\\&
=n\mu_n-tn\mu_n(2+\eps_n) + O(t_1^2 nR_n)
,
  \end{split}
\end{equation}
which yields \eqref{drake} by   \eqref{epscrit}, \eqref{t1} and \eqref{gssR}.

Furthermore, by \eqref{EL},
\begin{equation}
  \E L_n(t)=n\mu_n\ee{-2t}+O(1)
= n\mu_n-2tn\mu_n+O(nt_1^2+1),
\end{equation}
and \eqref{drakeL} follows because,
by \eqref{t1} and \eqref{gssR},
\begin{equation}\label{ntt}
  nt_1^2+1\sim nt_1^2 =n\qqq R_n^{-2/3}\sim \gs_n R_n\qw=O(\gs_n).
\end{equation}

Finally, \eqref{drakeA} follows from \eqref{drake} and \eqref{drakeL}.
\end{proof}

\begin{lemma}\label{LB8}
  Assume \refAA{} and \eqref{epscrit} and \eqref{gDRo}.
Then,
\begin{align}
  \E\Bigsqpar{\sup_{t\le t_1}\bigabs{\tA_n(t)}^2} &=  O(\gss_n). \label{drakeAA}
 \end{align}
\end{lemma}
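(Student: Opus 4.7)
The plan is to split $\tA_n(t)$ into its mean and fluctuation and handle the two pieces with \refL{LB2a} and \refL{LXC0} respectively. First I would use $(a+b)^2\le 2a^2+2b^2$ to write
\begin{equation}
\sup_{t\le t_1}\bigabs{\tA_n(t)}^2
\le 2\sup_{t\le t_1}\bigabs{\tA_n(t)-\E\tA_n(t)}^2
+2\sup_{t\le t_1}\bigpar{\E\tA_n(t)}^2,
\end{equation}
and observe that the second term on the right is already $O(\gss_n)$ deterministically by \eqref{drakeA}.

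For the fluctuation term I would apply \refL{LXC0} with $u=t_1$, which yields
\begin{equation}
\E\Bigsqpar{\sup_{t\le t_1}\bigabs{\tA_n(t)-\E\tA_n(t)}^2}
\le Cn\E\bigpar{D_n^2(1\land t_1D_n)}+C.
\end{equation}
To simplify the right-hand side, note that by \eqref{t1} and the hypothesis \eqref{gDRo} one has $t_1\gD_n=\gD_n/(nR_n)\qqq\to0$, so for all sufficiently large $n$, $t_1D_n\le t_1\gD_n\le 1$ almost surely, hence $1\land t_1D_n=t_1D_n$. Therefore
\begin{equation}
n\E\bigpar{D_n^2(1\land t_1D_n)}=nt_1\E D_n^3=nt_1R_n=(nR_n)^{2/3}\sim\gss_n
\end{equation}
by \eqref{gssR}. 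Since $R_n\ge\E D_n^2\to 2\mu>0$ and $n\to\infty$, we have $\gss_n\to\infty$, so the additive $C$ in the above bound is also $O(\gss_n)$. Combining everything produces the required estimate $\E\sup_{t\le t_1}|\tA_n(t)|^2=O(\gss_n)$.

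I do not anticipate any real obstacle: \refL{LXC0} is precisely engineered to produce supremum-in-$t$ second-moment bounds of exactly this shape, and the key hypothesis \eqref{gDRo} is exactly what is needed to collapse the truncated moment $\E(D_n^2(1\land t_1D_n))$ to the unscaled cubic moment $t_1\E D_n^3$, which in turn matches $\gss_n$ through the definition of $t_1$. The only minor point requiring attention is to verify that the additive constant from \eqref{ba0} is absorbed, which follows from $\gss_n\to\infty$, itself a consequence of \ref{A2} via $R_n\ge \E D_n^2\to 2\mu>0$.
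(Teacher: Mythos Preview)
Your proof is correct and follows essentially the same approach as the paper: split $\tA_n(t)$ into mean and fluctuation, bound the mean by \eqref{drakeA}, and bound the fluctuation via \refL{LXC0} together with $n\,\E\bigpar{D_n^2(1\land t_1D_n)}\le nt_1R_n\sim\gss_n$. The only cosmetic difference is that the paper simply uses the trivial inequality $1\land t_1D_n\le t_1D_n$ rather than invoking \eqref{gDRo} to force equality; your route works too but the extra hypothesis is not needed at this step.
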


\begin{proof}
  By \refL{LXC0}, together with \eqref{t1} and \eqref{gssR},
  \begin{equation}
	\begin{split}
\E\Bigl[\sup_{t\le t_1}|\tA_n(t)-\E\tA_n(t)|^2\Bigr]
&\le C n \E \bigpar{D_n^2(1\land t_1D_n)}+C
\\&
\le C n t_1 \E \bigpar{D_n^3}+C
=Cnt_1 R_n+C
=O(\gss_n).	
	\end{split}
  \end{equation}
Furthermore, $\sup_{t\le t_1}|\E\tA_n(t)|=O(\gs_n)$ by \eqref{drakeA},
and \eqref{drakeAA} follows.
\end{proof}

For ease of notation, let $N_k:=\tV_{n,k}(t_1)$,
the (random) number of vertices of degree $k$ such that none of their half-edges
dies spontaneously by time $t_1$. Thus $\tS_n(t_1)=\sum_k kN_k$, see
\eqref{m22}.
Let further
\begin{equation}\label{Zn}
Z_n:=\sumko k^2(n_k-N_k)\ge0.
\end{equation}

\begin{lemma}  \label{LZ}
Assume \refAA{} and \eqref{epscrit} and \eqref{gDRo}.
Then,
there exists a constant $\CC\CCdef\CCZ$ such that \whp{}
\begin{equation}\label{Zbound}
  Z_n\le \CCZ \gss_n.
\end{equation}
\end{lemma}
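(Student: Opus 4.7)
The plan is to bound $\E[Z_n]$ and $\Var(Z_n)$ separately using the fact that, by construction, $N_k = \tV_{n,k}(t_1) \sim \Bin(n_k, \e^{-kt_1})$ and the variables $N_k$ are independent across $k$. Chebyshev's inequality will then yield the concentration. The key quantitative input is that $t_1 \gD_n = o(1)$ and $\gD_n = o((nR_n)^{1/3})$, both supplied by \eqref{t1} and \eqref{gDRo}.

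First I would compute the mean. Using $1-\e^{-x}\le x$ for $x\ge 0$, together with \eqref{t1} and \eqref{gssR},
\begin{equation*}
\E[Z_n] = \sum_{k\ge 0} k^2 n_k \bigpar{1-\e^{-kt_1}} \le t_1 \sum_{k\ge 0} k^3 n_k = t_1 n R_n = (nR_n)^{2/3} \sim \gss_n.
\end{equation*}
Hence $\E[Z_n] \le (1+o(1))\gss_n$.

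Next I would estimate the variance. By independence of the $N_k$, and using $1-\e^{-x}\le x$ once more together with $\sum_k k^5 n_k \le \gD_n^2 \sum_k k^3 n_k = \gD_n^2 n R_n$,
\begin{equation*}
\Var(Z_n) = \sum_{k\ge 0} k^4 \Var(N_k) \le \sum_{k\ge 0} k^4 n_k(1-\e^{-kt_1}) \le t_1 \sum_{k\ge 0} k^5 n_k \le t_1 \gD_n^2 n R_n \asymp \gD_n^2 \gss_n.
\end{equation*}
Assumption \eqref{gDRo} says $\gD_n = o\bigpar{(nR_n)^{1/3}}$, and $\gss_n \sim (nR_n)^{2/3}$ by \eqref{gssR}, so $\gD_n^2 = o(\gss_n)$ and therefore $\Var(Z_n) = o(\gss_n^2)$.

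Finally, choosing say $\CCZ = 3$, Chebyshev's inequality gives
\begin{equation*}
\P\bigpar{Z_n > 3\gss_n} \le \P\bigpar{|Z_n - \E[Z_n]| > (2-o(1))\gss_n} \le \frac{\Var(Z_n)}{(2-o(1))^2 \gss_n^2} = o(1),
\end{equation*}
completing the proof. There is no real obstacle here; the proof is a direct Chebyshev argument. The only mildly delicate point is the crude bound $\sum_k k^5 n_k \le \gD_n^2 \sum_k k^3 n_k$, which loses a factor of $\gD_n^2$ but is absorbed by the \emph{strict} little-$o$ assumption in \eqref{gDRo}.
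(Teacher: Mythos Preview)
Your proof is correct and follows essentially the same approach as the paper: bound $\E Z_n$ by $t_1 n R_n \sim \gss_n$, bound $\Var Z_n$ by $t_1 \gD_n^2 n R_n = o(\gs_n^4)$ via the crude inequality $\sum_k k^5 n_k \le \gD_n^2 \sum_k k^3 n_k$ and \eqref{gDRo}, then apply Chebyshev. The paper's argument is identical in substance, only phrasing the variance bound directly as $o((nR_n)^{4/3})$ rather than passing through $\gD_n^2 \gss_n$.
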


\begin{proof}
$N_k\sim \Bin(n_k,\ee{-kt_1})$ and thus,
using \eqref{t1} and \eqref{gssR},
\begin{equation}\label{EZ}
  \begin{split}
\E Z_n
&= 	
\sumko k^2n_k\bigpar{1- \ee{-kt_1}}
\le \sumko k^3n_k t_1
=t_1 n R_n =O(\gss_n).
  \end{split}
\end{equation}
Furthermore, using also \eqref{gDRo},
\begin{equation}\label{VZ}
  \begin{split}
\Var Z_n
&= 	
\sumko k^4\Var N_k
\le
\sumko k^4n_k\bigpar{1-\ee{-kt_1}}
\le \sumko k^5n_k t_1
\\&
=t_1 n \E D_n^5 \le t_1 n \gD_n^2 R_n
=o\bigpar{(nR_n)^{4/3}}
=o(\gs_n^4).
  \end{split}
\end{equation}
Now \eqref{Zbound} follows by \eqref{EZ}--\eqref{VZ} and Chebyshev's
inequality.
\end{proof}

We condition on $\cF_{t_1}$, the $\gs$-field generated by all events up to
time $t_1$. Note that $\cF_{t_1}$ determines $N_k$, and thus $\tS_n(t_1)$
and $Z_n$, and also $L_n(t_1)$ and $\tA_n(t_1)$.

\begin{lemma}
  \label{LB3}
Assume \refAA{} and \eqref{epscrit} and \eqref{gDRo}.
For any fixed $\KK <\infty$ and all $t\in[0,\KK t_1]$,
\begin{align}
 \E\bigpar{\tS_n(t_1+t)\mid \cF_{t_1}} &=\tS_n(t_1)-2tn\mu_n+tZ_n + O(\gs_n)
\label{lb3},
  \\
 \E\bigpar{L_n(t_1+t)\mid \cF_{t_1}} &\ge L_n(t_1)-2tn\mu_n + O(\gs_n)
\label{lb3L},
\\
 \E\bigpar{\tA_n(t_1+t)\mid \cF_{t_1}} &\ge\tA_n(t_1)-tZ_n + O(\gs_n)
\label{lb3A}.
\end{align}
\end{lemma}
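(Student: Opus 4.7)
The plan is to exploit the memoryless property of the $\mathrm{Exp}(1)$ half-edge lifetimes: conditional on $\cF_{t_1}$, the $L_n(t_1)$ living half-edges carry i.i.d.\ $\mathrm{Exp}(1)$ remaining lifetimes, so the exploration from time $t_1$ onwards behaves like a fresh process started with this reduced resource set. In particular, for each $k$, conditional on $\cF_{t_1}$ one has $\tV_{n,k}(t_1+t)\sim\Bin(N_k,\e^{-kt})$, independently across $k$, since a vertex of degree $k$ is still counted in $\tV_{n,k}$ at time $t_1+t$ iff each of its $k$ half-edges has survived to $t_1+t$.

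To establish \eqref{lb3}, I would take conditional expectations to obtain $\E(\tS_n(t_1+t)\mid\cF_{t_1})=\sum_k kN_k\e^{-kt}$. Since $kt\le \KK\gD_nt_1=o(1)$ by \eqref{gDRo} and \eqref{t1}, a uniform Taylor expansion $\e^{-kt}=1-kt+O((kt)^2)$ yields
\[
\E\bigpar{\tS_n(t_1+t)\mid\cF_{t_1}}=\tS_n(t_1)-t\sum_k k^2 N_k+O\Bigpar{t^2\sum_k k^3 N_k}.
\]
By \eqref{EDn2} and the definition \eqref{Zn}, $\sum_k k^2 N_k=n\mu_n(2+\eps_n)-Z_n$, so the leading correction is $-2tn\mu_n+tZ_n$, while the residue $-tn\mu_n\eps_n$ has magnitude $O(nt_1\eps_n)=O((nR_n)^{1/3})=O(\gs_n)$ by \eqref{epscrit}, \eqref{t1} and \eqref{gssR}, and the error term satisfies $t^2\sum_k k^3 N_k\le \KK^2 t_1^2 nR_n=O(\gs_n)$ by the bound in \eqref{ntt}.

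For \eqref{lb3L}, the strategy is to reapply the two-process coupling already used for \eqref{EL}, restarted at time $t_1$: one adjoins on the same probability space a standard death process $\bar L^{(t_1)}$ with rate $2$ per individual, whose value at time $t_1$ is taken to be $L_n(t_1)$ or $L_n(t_1)+1$ (to make it even), so that $\bar L^{(t_1)}(t_1+s)/2\sim\Bin(\bar L^{(t_1)}(t_1)/2,\e^{-2s})$ and $|L_n(t_1+s)-\bar L^{(t_1)}(t_1+s)|\le1$ for all $s\ge0$, just as in \JL{Lemma 6.1}. Taking conditional expectations yields $\E(L_n(t_1+t)\mid\cF_{t_1})=L_n(t_1)\e^{-2t}+O(1)$; combining $\e^{-2t}\ge 1-2t$ with $L_n(t_1)\le\ell_n=n\mu_n$ gives $L_n(t_1)\e^{-2t}\ge L_n(t_1)-2tn\mu_n$, and $O(1)\subset O(\gs_n)$ since $\gs_n\to\infty$ by \eqref{gssR} and \eqref{Rnqqo}, yielding \eqref{lb3L}. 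Finally, \eqref{lb3A} follows from $\tA_n=L_n-\tS_n$ by subtracting \eqref{lb3} from \eqref{lb3L}: the $2tn\mu_n$ contributions cancel and what remains is $\tA_n(t_1)-tZ_n+O(\gs_n)$.

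The main delicate point is transferring the $|L_n-\bar L|\le 1$ coupling from time $0$ to time $t_1$, since one needs to verify that this discrepancy is preserved through all \sC1--\sC2--\sC3 cycles that fire in $[t_1,t_1+t]$; however, this is essentially the same argument used to establish \eqref{EL} at time $0$, and the memoryless property of the half-edge lifetimes makes the restarted coupling work verbatim.
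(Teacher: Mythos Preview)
Your proof is correct and follows essentially the same approach as the paper: Taylor-expand $\sum_k kN_k\e^{-kt}$ for \eqref{lb3}, use the death-process coupling for \eqref{lb3L}, and subtract for \eqref{lb3A}. The one minor simplification in the paper is that, instead of restarting a fresh coupling at time $t_1$, it uses the \emph{global} coupling $\LL_n$ from \eqref{serrander} (which already satisfies $|L_n(s)-\LL_n(s)|\le1$ for all $s\ge0$) together with the Markov property of the pure death process $\LL_n$ to get $\E(\LL_n(t_1+t)\mid\cF_{t_1})=\LL_n(t_1)\e^{-2t}$; this sidesteps the ``delicate point'' you flag at the end.
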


\begin{proof}
  We have, in analogy with \eqref{mask}, using \eqref{Zn},
  \begin{equation}
	\begin{split}
\E\bigpar{\tS_n(t_1+t)\mid\cF_{t_1}}
&
=\sumko k N_k \ee{-kt}
=\sumko k N_k \bigpar{1-kt+O(k^2t^2)}
\\&
=\tS_n(t_1)-t\biggpar{\sumko k^2n_k -Z_n}+O\biggpar{t^2\sumko k^3n_k}
\\&
=\tS_n(t_1)-t n \E D_n^2 +tZ_n+O\bigpar{t^2n R_n}.
	\end{split}
  \end{equation}
Then \eqref{lb3} follows by \eqref{EDn2} and estimates
as in the proof of
\refL{LB2a},
using   \eqref{epscrit}, \eqref{t1}, \eqref{gssR}
and the assumption $t=O(t_1)$.

For $L_n$ we use again the coupling with $\LL_n$. As
$\frac12\LL_n(t)$ is a standard death process with intensity 2,
\begin{equation}
  \begin{split}
\E\bigpar{L_n(t_1+t)\mid\cF_{t_1}}
&=
\E\bigpar{\LL_n(t_1+t)\mid\cF_{t_1}}+O(1)
=\LL_n(t_1)\ee{-2t}+O(1)
\\&
=L_n(t_1)-2t L_n(t_1)+O(1+nt^2).	
  \end{split}
\end{equation}
Then \eqref{lb3L} follows, since $L_n(t_1)<\ell_n=n\mu_n$, using again
\eqref{ntt}.

Finally, \eqref{lb3A} follows from \eqref{lb3} and \eqref{lb3L}.
\end{proof}

\begin{lemma}
  \label{LB4}
Assume \refAA.
For any fixed $\KK <\infty$ and all $t\in[0,\KK t_1]$,
\begin{align}\label{lb4}
\E\Bigl[\sup_{t\le \KK t_1}
  \Bigabs{\tS_n(t_1+t)-\E\bigpar{\tS_n(t_1+t)\mid\cF_{t_1}}}^2 \Bigm|\cF_{t_1}
\Bigr]
&=O(\gss_n),
\\
\E\Bigl[\sup_{t\le \KK t_1}
  \Bigabs{L_n(t_1+t)-\E\bigpar{L_n(t_1+t)\mid\cF_{t_1}}}^2 \Bigm|\cF_{t_1}
\Bigr]
&=O(\gss_n), \label{lb4L}
\\
\E\Bigl[\sup_{t\le \KK t_1}
  \Bigabs{\tA_n(t_1+t)-\E\bigpar{\tA_n(t_1+t)\mid\cF_{t_1}}}^2 \Bigm|\cF_{t_1}
\Bigr]
&=O(\gss_n). \label{lb4A}
\end{align}
\end{lemma}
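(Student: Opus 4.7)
The plan is to reproduce the proof of \refL{LXC0} conditionally on $\cF_{t_1}$ over the time interval $[t_1,t_1+\KK t_1]$. The key observation that makes this work is that, by the memoryless property of the independent $\mathrm{Exp}(1)$ lifetimes attached to the half-edges, conditional on $\cF_{t_1}$ the processes $(\tV_{n,k}(t_1+s))_{s \ge 0}$ for $k \ge 0$ are independent simple death processes with $\tV_{n,k}(t_1)=N_k$ and each surviving half-edge dying spontaneously at rate $1$. Thus, conditionally on $\cF_{t_1}$, $\tV_{n,k}(t_1+s) \sim \Bin(N_k, e^{-ks})$, independently across $k$, so the semimartingale structure underlying \refL{LX} is preserved under the conditioning.

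For \eqref{lb4}, I would set $M(s) := \tS_n(t_1+s) - \E[\tS_n(t_1+s) \mid \cF_{t_1}]$. Arguing as in \eqref{jb1}--\eqref{jb2} but with $n_k$ replaced by $N_k \le n_k$, the conditional variance of $\tS_n(t_1+s)$ and the conditional second moment of its drift admit the same bounds in terms of $\sum_k k^j n_k$ as in the unconditional proof of \refL{LXC0}. Applying \refL{LX} conditionally on $\cF_{t_1}$ with $u=\KK t_1$, exactly as in the derivation of \eqref{jb4}, then yields
\begin{equation*}
\E\Bigl[\sup_{s \le \KK t_1}M(s)^2 \Bigm| \cF_{t_1}\Bigr] \le C n\E\bigl[D_n^2(1 \wedge \KK t_1 D_n)\bigr] \le C' n t_1 R_n = O(\gss_n),
\end{equation*}
using $t_1\gD_n = o(1)$ from \eqref{gDRo} together with \eqref{gssR} and the definition \eqref{t1} of $t_1$.

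For \eqref{lb4L}, I would use the coupling with $\LL_n$ from \eqref{serrander}: conditional on $\cF_{t_1}$, $\LL_n(t_1+s)/2$ is a standard death process of rate $2$ started from $\LL_n(t_1)/2 \le n\mu_n/2$, so the conditional analogue of \eqref{elfstrand} bounds the supremum of the squared conditional deviation of $\LL_n(t_1+\cdot)$ by $O(nt_1)$. Since $R_n \ge \E D_n^2 \to 2\mu > 0$ by \eqref{EDn2} and \eqref{mu}, we have $nt_1 = n^{2/3}R_n^{-1/3} = O((nR_n)^{2/3}) = O(\gss_n)$, and combining with \eqref{serrander} yields \eqref{lb4L} (the deterministic $O(1)$ error from \eqref{serrander} being negligible since $\gss_n\to\infty$). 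Finally, \eqref{lb4A} follows from $\tA_n = L_n - \tS_n$ and $(a+b)^2 \le 2a^2+2b^2$.

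The only real obstacle is verifying that \refL{LX} applies conditionally on $\cF_{t_1}$. This is essentially routine: conditional on $\cF_{t_1}$, the remaining lifetimes of the surviving half-edges are independent $\mathrm{Exp}(1)$ variables by the memoryless property, so all the moment computations and martingale identifications underlying the semimartingale inequality go through verbatim after conditioning, with $n_k$ replaced by the (random but $\cF_{t_1}$-measurable) quantity $N_k$.
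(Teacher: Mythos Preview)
Your proposal is correct and follows essentially the same route as the paper: condition on $\cF_{t_1}$, observe that the processes restart with $N_k\le n_k$ vertices of degree $k$, rerun the \eqref{jb1}--\eqref{jb4} estimates underlying \refL{LXC0} to get the bound $C n\E[D_n^2(1\wedge uD_n)]\le Cnt_1R_n=O(\gss_n)$, handle $L_n$ via the $\LL_n$ coupling, and combine for $\tA_n$. The only quibble is that your appeal to \eqref{gDRo} for the $\tS_n$ bound is superfluous (the inequality $1\wedge x\le x$ already suffices), though both you and the paper ultimately invoke \eqref{gssR}, which does rely on \eqref{gDRo}.
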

\begin{proof}
  Conditioned on $\cF_{t_1}$, the process $\tS_n(t_1+t)$ is exactly as
  $\tS_n(t)$, but starting with $N_k$ vertices of degree $k$ instead of
  $n_k$.
Hence the arguments in \eqref{jb1}--\eqref{jb4} in the proof of \refL{LXC0}
hold in this case too and, since $N_k\le n_k$, we obtain, for any $u\ge0$,
\begin{equation*}
\E\Bigl[\sup_{t\le u}
\Bigabs{\tS_n(t_1+t)-\E\bigpar{\tS_n(t_1+t)\mid\cF_{t_1}}}^2\Bigm|\cF_{t_1}
 \Bigr]
\le\CCjb n \E \bigpar{D_n^2(uD_n\land 1)}
\le\CCjb nu \E {D_n^3}.
\end{equation*}
The result \eqref{lb4} follows by taking $u=\KK t_1$, using again \eqref{Rn},
\eqref{t1} and \eqref{gssR}.

Similarly,  as in \eqref{elfbrink}, or by \cite[Lemma 6.1]{JanLuc07} after
conditioning on $\cF_{t_1}$, we obtain,
since $t_1=o(1)$,
\begin{equation}\label{lexe}
\E\Bigl[\sup_{t\le \KK t_1}
\Bigabs{L_n(t_1+t)-\E\bigpar{L_n(t_1+t)\mid\cF_{t_1}}}^2\Bigm|\cF_{t_1}
 \Bigr]
=O(nt_1+1).
\end{equation}
Furthermore,  as said above, $nt_1\to\infty$ and $R_n\qw=O(1)$,
and thus, \cf{} \eqref{ntt},
\begin{equation}
nt_1+1\asymp nt_1=n^{2/3}R_n\qqqw \asymp\gss_n R_n\qw =O(\gss_n).
\end{equation}
Hence, \eqref{lexe} yields \eqref{lb4L}.
Finally,
\eqref{lb4A}
follows by combining \eqref{lb4} and \eqref{lb4L}.
\end{proof}

\begin{lemma}\label{LB5}
Assume \refAA{} and \eqref{epscrit} and \eqref{gDRo}.
For any fixed $\KK >1$, there is some $p(\KK )>0$ such that
with probability at least $p(\KK )+o(1)$,
\begin{equation}\label{lb5}
\tA_n(t)
>0
\qquad\text{for all $t\in[t_1,\KK t_1]$}.
\end{equation}
\end{lemma}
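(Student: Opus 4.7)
The plan is to start the interval $[t_1, Kt_1]$ from an atypically large value of $\tA_n(t_1)$, and then argue that over an interval of length $(K-1)t_1$ the negative drift and the martingale fluctuations of $\tA_n$ are each only of order $\gs_n$, so that they cannot erase a sufficiently large initial surplus. A crucial preliminary observation is that, by \eqref{t1} and \eqref{gssR},
\begin{equation*}
  t_1\gs_n^2 \asymp (nR_n)^{-1/3}\cdot (nR_n)^{2/3}=(nR_n)^{1/3}\asymp\gs_n,
\end{equation*}
so that all three competing scales---the starting value, the total drift over $[t_1,Kt_1]$, and the fluctuation---are commensurate and of order $\gs_n$. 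This matching of scales is precisely what makes a \emph{constant} lower bound for $p(K)$ available in the critical window.

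The argument proceeds in three steps. First, I would pick $b=b(K)>0$ (to be fixed at the end) and apply \refL{LB1}\ref{LB1A}: with probability at least $c(b)+o(1)$ we have $\tA_n(t_1) - \E\tA_n(t_1) > b\gs_n$, which combined with \eqref{drakeA} gives $\tA_n(t_1) > (b-C_1)\gs_n$ for some absolute constant $C_1$. Second, I would condition on $\cF_{t_1}$ and restrict to the high-probability event $\{Z_n\le C_Z\gs_n^2\}$ supplied by \refL{LZ}; by \refL{LB3} one then has, uniformly in $s\in[0,(K-1)t_1]$,
\begin{equation*}
  \E\bigpar{\tA_n(t_1+s)\mid\cF_{t_1}}
\ge \tA_n(t_1)-sZ_n-C_2\gs_n
\ge \tA_n(t_1)-(C_3K+C_2)\gs_n,
\end{equation*}
using $sZ_n\le (K-1)t_1\cdot C_Z\gs_n^2\le C_3 K\gs_n$ from the scale matching displayed above.

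Third, set $M^*:=\sup_{s\le (K-1)t_1}\bigabs{\tA_n(t_1+s)-\E(\tA_n(t_1+s)\mid\cF_{t_1})}$. By \refL{LB4} and conditional Markov, $\P(M^*>\eta\gs_n\mid\cF_{t_1})\le C_4/\eta^2$ for every $\eta>0$; choose $\eta:=\sqrt{4C_4}$, making this conditional probability at most $1/4$. On the intersection of the three good events constructed so far,
\begin{equation*}
  \tA_n(t_1+s)\ge \bigpar{b-C_1-C_2-C_3K-\eta}\gs_n\qquad\text{for every }s\in[0,(K-1)t_1].
\end{equation*}
Finally fix $b=b(K):=C_1+C_2+C_3K+\eta+1$, so the right-hand side is at least $\gs_n>0$. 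Taking expectations over the $\cF_{t_1}$-measurable events, one obtains
\begin{equation*}
  \P\bigpar{\tA_n(t)>0\text{ for all }t\in[t_1,Kt_1]}\ge \tfrac34\,c(b(K))+o(1),
\end{equation*}
so one may take $p(K):=c(b(K))/2>0$.

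The one delicate point is the arithmetic scale-matching $t_1\gs_n^2\asymp\gs_n$: were the drift over $[t_1,Kt_1]$ of larger order than $\gs_n$, no finite $b$ independent of $n$ could be chosen, and the lemma would fail. The critical-window hypothesis \eqref{epscrit}, encoded in the definition \eqref{t1} of $t_1$, is exactly what forces this balance. Everything else is a combination of the central limit calculation \refL{LB1}\ref{LB1A}, the drift bound \refL{LB3}, and the second-moment maximal inequality \refL{LB4}, applied to the natural decomposition of $\tA_n$ into initial value, conditional mean, and martingale fluctuation.
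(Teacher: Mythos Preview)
Your proof is correct and follows essentially the same approach as the paper. Both arguments combine \refL{LB1}\ref{LB1A} (to secure $\tA_n(t_1)>b\gs_n+\E\tA_n(t_1)$ with probability $c(b)+o(1)$), \refL{LZ} (to control $Z_n$), \refL{LB3} (to bound the conditional drift), and \refL{LB4} with Chebyshev (to bound the fluctuation $M^*$), then choose $b=b(K)$ large enough to absorb all the $O(\gs_n)$ errors; your explicit constant tracking and highlighting of the scale identity $t_1\gs_n^2\asymp\gs_n$ make the structure slightly more transparent, but the ingredients and logic are identical to the paper's proof.
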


\begin{proof}
  Fix $\KK >1$. Let $b>0$ be another fixed number, to be determined later.
Consider the event
\begin{equation}\label{ceb}
\ceb:=\bigset{\tA_n(t_1)-\E\tA_n(t_1)>b\gs_n
\quad\text{and}\quad Z_n\le \CCZ\gss_n},
\end{equation}
with $\CCZ$ as in \refL{LZ}.
By \eqref{Ssmall} and \eqref{Zbound}, $\P(\ceb)\ge c(b)+o(1)$, where $c(b)>0$
is independent of $n$.
Define also the family of events $\{\cec :  C>0\}$, with $\cec$ given by
\begin{equation}\label{cec}
\cec:=\Bigset{
\sup_{t\le \KK t_1}
  \Bigabs{\tA_n(t_1+t)-\E\bigpar{\tA_n(t_1+t)\mid\cF_{t_1}}}
\le C \gs_n}.
\end{equation}
Further, let
\begin{equation}\label{cebc}
  \cebc:=\ceb\cap \cec.
\end{equation}
Note that $\ceb\in\cF_{t_1}$. Hence,
by \refL{LB4} and Chebyshev's inequality,
there exists a constant $\CC\CCdef\CCpec$ such that
\begin{equation}\label{pec}
  \P(\cec\mid\ceb)
\ge1-\frac{\CCpec \gss_n}{(C\gs_n)^2}=1-\frac{\CCpec}{C^2}.
\end{equation}
Consequently, if we choose $C:=2\CCpec\qq$,
then
\begin{equation}
  \P(\cebc)=\P(\cec\mid\ceb)\P(\ceb)\ge \tfrac34\P(\ceb)\ge\tfrac34c(b)+o(1).
\end{equation}

On the event $\cebc$, we have
by \eqref{cec}, \eqref{lb3A}, \eqref{ceb}, \eqref{drakeA} and \eqref{t1},
for any $t\in[0,\KK t_1]$,
\begin{equation}\label{brx}
  \begin{split}
\tA_n(t_1+t)&
\ge \E\bigpar{\tA_n(t_1+t)\mid\cF_{t_1}}-C\gs_n
\ge\tA_n(t_1)-tZ_n+O(\gs_n)
\\&
> b\gs_n+\E\tA_n(t_1)-\CCZ t\gss_n+O(\gs_n)
\\&
=b\gs_n+O(\gs_n).
  \end{split}
\end{equation}
The implicit constants here depends on $\KK $ but not on $b$; thus the final
error term $O(\gs_n)\ge -\CC(\KK )\gs_n$ for some $\CCx(\KK )$.
Hence we may for
any $\KK $ choose $b=b(\KK ):=\CCx(\KK )$, and the result follows, with
$p(\KK )=\frac34c(b(\KK ))$.
\end{proof}

We can obtain results for $\tV_n$ similar to the results for $\tS_n$ above
(in Lemmas \ref{LB1}, \ref{LB2a}, \ref{LB3}, and \ref{LB4})
by the same arguments. However, we have no need for such results involving
conditioning and uniform estimates; the following simple results  are enough.

\begin{lemma}
  \label{LB6}
Assume \refAA{} and \eqref{gDRo}.
Fix $\KK >0$. For any $t\in[0,\KK t_1]$,
\begin{equation}\label{lb6}
\tV_n(t)=n-n\mu_n t+\Op\bigpar{nt_1^2+\sqrt{nt_1}}
  =n-n\mu_n t+\op\xpar{nt_1}.
\end{equation}
\end{lemma}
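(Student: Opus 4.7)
The plan is to decompose $\tV_n(t)$ into its mean and fluctuation and handle each term by direct computation, exploiting the explicit binomial structure coming from the independent spontaneous death times. Recall from \refS{SSgeneral} that $\tV_{n,k}(t)\sim\Bin(n_k,\e^{-kt})$ and the $\tV_{n,k}(t)$ are independent for different $k$ at a fixed time.

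For the mean, I would Taylor expand the exponential: for any $t \in [0,Kt_1]$, since $\e^{-kt}=1-kt+O(k^2t^2)$,
\begin{equation*}
\E\tV_n(t)=\sumko n_k\e^{-kt}=n - t\sumko kn_k + O\Bigpar{t^2 \sumko k^2 n_k}=n-n\mu_n t+O\bigpar{t^2 n\E D_n^2}.
\end{equation*}
By \ref{A2}, $\E D_n^2=O(1)$, so uniformly for $t\le Kt_1$ this gives $\E\tV_n(t)=n-n\mu_n t+O(nt_1^2)$.

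For the fluctuation, independence of the $\tV_{n,k}(t)$ gives, for any fixed $t\le Kt_1$,
\begin{equation*}
\Var \tV_n(t)=\sumko n_k \e^{-kt}\bigpar{1-\e^{-kt}} \le \sumko n_k (1\land kt)\le t\sumko kn_k = nt\mu_n=O(nt_1).
\end{equation*}
Chebyshev's inequality then yields $\tV_n(t)-\E\tV_n(t)=\Op\bigpar{\sqrt{nt_1}}$, and combining with the mean estimate gives the first equality in \eqref{lb6}.

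For the second equality, it remains to note that both error terms are $\op(nt_1)$. Indeed, $nt_1^2/(nt_1)=t_1=o(1)$ since $t_1\to0$, and $\sqrt{nt_1}/(nt_1)=(nt_1)^{-1/2}=o(1)$ since $nt_1\to\infty$ (as observed just after \eqref{t1}). There is no real obstacle here: the result is a routine consequence of the Taylor expansion and Chebyshev bound, made simple because we only need a pointwise-in-$t$ statement (no supremum over $t$ and no conditioning on $\cF_{t_1}$), and because \ref{A2} bounds $\E D_n^2$ so the $k^2$-moment in the Taylor remainder is harmless.
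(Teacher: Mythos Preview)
Your proof is correct and follows essentially the same approach as the paper: compute the mean via Taylor expansion of $\e^{-kt}$, bound the variance using independence of the $\tV_{n,k}(t)$, apply Chebyshev, and then note $t_1\to0$ and $nt_1\to\infty$ for the second equality. The only cosmetic difference is that the paper writes the mean remainder directly as $O(nt^2)$ (absorbing $\E D_n^2=O(1)$), whereas you make the use of \ref{A2} explicit.
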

\begin{proof}
  Recall that $\tV_n(t)=\sum_k\tV_{n,k}(t)$ where $\tV_{n,k}(t)$ are
  independent and $\tV_{n,k}(t)\sim\Bin\bigpar{n_k,\ee{-kt}}$.
Hence,
\begin{equation}\label{lb6q}
  \E \tV_n(t) =\sumko n_k\ee{-kt}
=\sumko n_k\bigpar{1-kt+O(k^2t^2)}
=n-n\mu_n t + O(nt^2)
\end{equation}
and
\begin{equation}\label{lb6w}
  \Var \tV_n(t) =\sumko n_k\ee{-kt}\bigpar{1-\ee{-kt}}
\le\sumko n_kkt
=n\mu_n t =O(nt).
\end{equation}
The first equality in \eqref{lb6} follows from \eqref{lb6q}--\eqref{lb6w}.
The second follows because $nt_1^2=o(nt_1)$ and $\sqrt{nt_1}=o(nt_1)$.
\end{proof}

\begin{lemma}
  \label{LB7}
Assume \refAA{} and \eqref{gDRo}, and define $\VVn(t):=\tV_n(t)-V_n(t)\ge0$.
Fix $B>1$. Then
\begin{equation}
\VVn(t_1)-\VV_n(Bt_1)
\le \Op(t_1\gs_n)
=\op(nt_1).
\end{equation}
\end{lemma}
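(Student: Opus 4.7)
The plan is to first identify the vertices contributing to $\VVn(t) = \tV_n(t) - V_n(t)$. Every sleeping vertex at time $t$ has all its half-edges sleeping, and in particular all of its natural lifetimes exceed $t$; hence, viewing $V_n(t)$ and $\tV_n(t)$ as cardinalities of the corresponding sets, the set of sleeping vertices is contained in the set counted by $\tV_n(t)$. The difference therefore consists of vertices that are awake yet all of whose natural lifetimes exceed $t$. Awakening via an \sC3 event would require a half-edge of the vertex to have died naturally by time $t$, so such a vertex must have been awakened through an \sC1 operation. In particular, a vertex contributes to $\VVn(t_1)$ but not $\VVn(Bt_1)$ precisely when it was \sC1-awakened by $t_1$, had all lifetimes exceeding $t_1$, and at least one of its half-edges reached its natural lifetime in $(t_1, Bt_1]$.

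Writing $\cU_n$ for the set of vertices contributing to $\VVn(t_1)$, so that $\lrabs{\cU_n} = \VVn(t_1)$, this gives
$$\VVn(t_1) - \VVn(Bt_1) \le \sum_{v \in \cU_n} X_v,$$
where $X_v$ is the indicator that some half-edge of $v$ dies naturally in $(t_1, Bt_1]$. Every $v \in \cU_n$ has all half-edges alive at $t_1$, so conditionally on $\cF_{t_1}$ the remaining lifetimes are iid $\mathrm{Exp}(1)$ by the memoryless property, and therefore
$$\P(X_v = 1 \mid \cF_{t_1}) = 1 - \ee{-d_v(B-1)t_1} \le d_v(B-1)t_1,$$
where $d_v$ denotes the degree of $v$. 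Summing over $v \in \cU_n$ and using the identity $\sum_{v \in \cU_n} d_v = \tS_n(t_1) - S_n(t_1)$ (which holds because the sleeping half-edges coincide with the half-edges of sleeping vertices) yields
$$\E\bigsqpar{\VVn(t_1) - \VVn(Bt_1) \bigm| \cF_{t_1}} \le (B-1)t_1\bigpar{\tS_n(t_1) - S_n(t_1)}.$$

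The remaining ingredient is a bound on $\tS_n(t_1) - S_n(t_1)$, which is already furnished by \eqref{jlss}:
$$\tS_n(t_1) - S_n(t_1) \le -\inf_{s \le t_1}\tA_n(s) + \gD_n.$$
Lemma~\ref{LB8} provides $\sup_{s\le t_1}\lrabs{\tA_n(s)} = \Op(\gs_n)$, while \eqref{gDRo} together with \eqref{gssR} gives $\gD_n = o\bigpar{(nR_n)\qqq} = o(\gs_n)$. Hence $\tS_n(t_1) - S_n(t_1) = \Op(\gs_n)$, and applying Markov's inequality to the conditional bound yields $\VVn(t_1) - \VVn(Bt_1) = \Op(t_1\gs_n)$. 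Finally, $t_1 \gs_n \asymp 1$ by \eqref{t1} and \eqref{gssR} while $nt_1 \to \infty$ by \eqref{Rnqqo}, so $\Op(t_1\gs_n) = \op(nt_1)$, completing the argument.

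The only real obstacle is the combinatorial identification in the first paragraph, namely that $\cU_n$ consists exactly of the \sC1-awakened vertices whose natural lifetimes have not all been reached. Once this is established, the independence afforded by the memoryless property, together with the bound on $\sup_{s\le t_1}\lrabs{\tA_n(s)}$ from Lemma~\ref{LB8}, makes the rest routine.
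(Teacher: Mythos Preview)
Your proof is correct and follows essentially the same approach as the paper's. The paper organizes the same computation degree by degree, bounding $\E\bigl[(\VV_{n,k}(t_1)-\VV_{n,k}(t_1+t))_+\mid\cF_{t_1}\bigr]\le kt\,\VV_{n,k}(t_1)$ and summing over $k$ to reach the same inequality $\E\bigl[(\VVn(t_1)-\VVn(t_1+t))_+\mid\cF_{t_1}\bigr]\le t\bigl(\tS_n(t_1)-S_n(t_1)\bigr)$; your vertex-by-vertex formulation with $\sum_{v\in\cU_n}d_v=\tS_n(t_1)-S_n(t_1)$ is just a repackaging of this sum. The remaining steps (\eqref{jlss} plus Lemma~\ref{LB8} for $\tS_n(t_1)-S_n(t_1)=\Op(\gs_n)$, then a conditional Markov argument) are identical to the paper's.
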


\begin{proof}
  $\VV\nk(t):=\tV_{n,k}(t)-V_{n,k}(t)$ is the number of vertices of degree
  $k$ that are
  awake at time $t$, but their $k$ half-edges all have maximal life times
  larger than $t$. This number may increase when $\sC1$ is performed, and it
  decreases when a half-edge at one of these vertices dies spontaneously
  (and $\sC3$ is performed). Consequently, conditioning of $\cF_{t_1}$,
for any $t\ge0$,
  \begin{equation*}
	\begin{split}
\E \bigpar{(\VV\nk(t_1)-\VV\nk(t_1+t))_+\mid \cF_{t_1}}
\le kt \VV_{n,k}(t_1).
	\end{split}
  \end{equation*}
Summing over $k$ yields, using \eqref{m22},
  \begin{equation}\label{EVV}
\E \bigpar{(\VVn(t_1)-\VVn(t_1+t))_+\mid \cF_{t_1}}
\le t \bigpar{  \tS_n(t_1)-S_n(t_1)}.
  \end{equation}

By \eqref{jlss} and \refL{LB8}, noting that $\gD_n=O(\gs_n)$ by \eqref{gDR}
and \eqref{gssR},
\begin{equation}
  \tS_n(t_1)-S_n(t_1) < \sup_{t\le t_1} |\tA_n(t)|+\gD_n=\Op(\gs_n).
\end{equation}
In other words, for every $\eps>0$ there exist $K(\eps)$ independent of $n$
such that
\begin{equation}\label{gronkulla}
\P\bigpar{\tS_n(t_1)-S_n(t_1))>K(\eps)\gs_n}\le \eps.
\end{equation}
Furthermore, for any fixed $K$, \eqref{EVV} implies
  \begin{equation}\label{EVV2}
\E \bigpar{(\VVn(t_1)-\VVn(t_1+t))_+\mid \tS_n(t_1)-S_n(t_1)\le K\gs_n}
=O( t \gs_n).
  \end{equation}
It follows by \eqref{EVV2}, Markov's inequality and \eqref{gronkulla} that,
for any $t>0$,
\begin{equation}
(\VVn(t_1)-\VVn(t_1+t))_+ = \Op(t\gs_n).
\end{equation}
Now take $t=(B-1)t_1$.
\end{proof}

\begin{proof}[Proof of \refT{TC}\ref{TC>}]
Note that the assumptions include \eqref{epscrit} and \eqref{gDRo}.
Recall also that $A_n(t)\ge\tA_n(t)$ for all $t$, see \eqref{jlss}.
Hence by \refL{LB5},
for every $\KK >1$, there exists $p(\KK )>0$ such that
with probability at least $p(\KK )+o(1)$,
$A_n(t)\ge \tA_n(t)>0$ for all $t\in[t_1,\KK t_1]$. By the
discussion in \refSS{SSgeneral}, this means that $\sC1$ is not performed
during the interval $[t_1,\KK t_1]$ and thus all vertices awakened during this
interval belong to the same component, say $\cC$. The number of these
vertices is $V_n(t_1)-V_n(\KK t_1)$.
Consequently,
with probability at least $p(\KK )+o(1)$,
\begin{equation}\label{req}
  \vx{\cC_1}\ge\vx{\cC}\ge
V_n(t_1)-V_n(\KK t_1).
\end{equation}
Furthermore,
by Lemmas \ref{LB7} and  \ref{LB6},
\begin{equation}\label{kyr}
  \begin{split}
  V_n(t_1)-V_n(\KK t_1)
&
=
 \tV_n(t_1)-\tV_n(\KK t_1)+\VVn(\KK t_1)-\VVn(t_1)
\ge
  \tV_n(t_1)-\tV_n(\KK t_1)+\op(nt_1)
\\&
=n\mu_n(\KK -1)t_1+\op(nt_1)
=n\mu(\KK -1)t_1+\op(nt_1).	
  \end{split}
\raisetag\baselineskip
\end{equation}
Hence,
$V_n(t_1)-V_n(\KK t_1)> \bigpar{\mu(\KK -1)-1}nt_1$ \whp

Finally, given any $K>0$, choose $\KK $ such that $\mu(\KK -1)=K+1$.
Then \eqref{req} and \eqref{kyr} thus show that,
with probability at least $p(\KK )+o(1)$,
recalling \eqref{t1},
\begin{equation}\label{chr}
  \vx{\cC_1}\ge V_n(t_1)-V_n(\KK t_1)> Knt_1 = Kn^{2/3}R_n\qqqw,
\end{equation}
which completes the proof of \eqref{tc>}.
\end{proof}

\subsection{Proof of \refT{TC}\ref{TC>} in the graph case}
\label{SSTC>graph}

Unlike the other results in this paper, \refT{TC}\ref{TC>} says that a
certain event asymptotically has a positive but possibly small probability.
In order to obtain the same result for the simple random graph $G_n$ from the
result for $G_n^*$, we have to
show that this event has a large intersection with the event
$\es:=\set{G_n^*\text{ is simple}}$.

Recall that \ref{A2} implies $\P(\es)\ge \cs+o(1)$ for some $\cs>0$.
In fact, \eqref{gDqqo} and  \ref{Anu=1} imply,
see \eg{} \cite[Corollary 1.4]{Jans06b} or \cite[Theorem 1.1]{AngHofHol16},
	\begin{equation}
	\label{pes}
  	\P(\es)=\e^{-\nu_n/2-\nu_n^2/4}+o(1)=\e^{-3/4}+o(1),
	\end{equation}
so we take $\cs:=\e^{-3/4}$.

We claim the following:
\begin{lemma}
  \label{Ls}
Assume \refAA{} and \eqref{gDRo}.
Then the asymptotic normality \eqref{asnS} and \eqref{asnL} hold also
conditioned
on $\es$.
(The expectations in \eqref{asnS} and \eqref{asnL} are still for the
configuration model, without conditioning.)
\end{lemma}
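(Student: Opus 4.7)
The plan is to show that, conditionally on the event $\mathcal{E}_s$, both $\tilde S_n(t_1)$ and $L_n(t_1)$ have asymptotically the same distributions as unconditionally. In each case this will follow by identifying the variable in question, up to an asymptotically negligible error, as a measurable function of random data that is independent of the pairing of the configuration model — and hence of $\mathcal{E}_s$, which is a pairing-measurable event.

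First, consider $\tilde S_n(t_1)=\sum_k k\tilde V_{n,k}(t_1)$. By the definition in \refS{SSgeneral}, $\tilde V_{n,k}(t_1)$ counts vertices of degree $k$ all of whose half-edges have i.i.d.\ $\mathrm{Exp}(1)$ maximal lifetimes exceeding $t_1$, so $\tilde S_n(t_1)$ is a measurable function of the lifetimes alone. In the joint construction used throughout Sections \ref{SSgeneral}--\ref{Ssuper}, the lifetimes and the uniformly random pairing are independent; hence $\tilde S_n(t_1)\perp\mathcal{E}_s$, its conditional distribution coincides with its unconditional one, and \eqref{asnS} transfers without change.

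For $L_n(t_1)$ we invoke the coupling $|L_n(t_1)-\bar L_n(t_1)|\le 1$ from the proof of \refL{LB1}\ref{LB1L}, in which $\tfrac12\bar L_n(t)$ is a pure death process of intensity $2$ started at $\ell_n/2$. The crucial point is that $\bar L_n$ can be realised so as to be measurable with respect to data independent of the pairing: the marginal $\tfrac12\bar L_n(t_1)\sim\Bin(\ell_n/2,e^{-2t_1})$ depends on the pairing only through the minima of pair-lifetimes, and these minima are i.i.d.\ $\mathrm{Exp}(2)$ no matter which pairing is used. Constructing $\bar L_n$ via an auxiliary matching chosen independently of the configuration-model pairing (while preserving the pathwise coupling to $L_n$) gives $\bar L_n(t_1)\perp\mathcal{E}_s$. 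The binomial CLT $(\bar L_n(t_1)-\E\bar L_n(t_1))/\sigma_{L,n}\dto N(0,1)$ then holds both unconditionally and conditionally on $\mathcal{E}_s$; combining this with \eqref{EL}, which gives $\E L_n(t_1)=\E\bar L_n(t_1)+O(1)$, and with $|L_n(t_1)-\bar L_n(t_1)|\le 1=o(\sigma_{L,n})$ (since $\sigma_{L,n}\to\infty$), yields \eqref{asnL} conditionally on $\mathcal{E}_s$.

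The main obstacle is verifying that the coupling of \cite[Lemma 6.1]{JanLuc07} can be arranged so that $\bar L_n$ is both pathwise within $1$ of $L_n$ and measurable with respect to randomness independent of the pairing. Should a direct arrangement of this kind prove awkward, a robust fallback is to proceed via joint convergence: by \cite{Jans06b} (\cf{} \eqref{pes}), the numbers of loops and of multi-edges in $G_n^*$ converge jointly to independent Poisson variables, and standard moment computations show that this joint convergence extends to include $(\tilde S_n(t_1)-\E\tilde S_n(t_1))/\sigma_n$ and $(L_n(t_1)-\E L_n(t_1))/\sigma_{L,n}$, with the latter two asymptotically independent of the Poisson counts. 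Since $\mathcal{E}_s=\{\#\text{loops}=\#\text{multi-edges}=0\}$, the conditional versions of \eqref{asnS} and \eqref{asnL} follow directly from this joint limit.
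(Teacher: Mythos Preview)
Your primary approach rests on a false premise: in the construction of \refS{SSgeneral}, the lifetimes and the pairing are \emph{not} independent. The pairing is \emph{built from} the lifetimes by the algorithm---each \sC3 step pairs the half-edge just killed in \sC2 with whichever living half-edge next dies spontaneously. Thus $\es$, which is determined by the pairing, is a function of the very same lifetimes that determine $\tS_n(t_1)$; there is no separately sampled uniform matching in this construction. Your attempted fix for $\LL_n$ via an ``auxiliary matching'' fails for the same reason: to keep $|\LL_n-L_n|\le1$ pathwise, $\LL_n$ must track $L_n$, and $L_n$ is itself a function of the lifetimes and hence not independent of $\es$. (If exact independence held, the lemma would be a one-line remark; the paper gives a substantial proof precisely because only \emph{asymptotic} independence is available.)

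Your fallback---joint convergence of the normalised statistics together with the Poisson limits for loop and multi-edge counts, then conditioning---is valid in principle; the paper itself notes that the method of \cite{SJ196,SJ282,SJ300} would work here. But you have not carried out those computations, and they are not immediate.

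The paper's actual argument is different and more direct. Fix $a\in\bbR$ and let $\ea=\{(\tS_n(t_1)-\E\tS_n(t_1))/\gs_n\le a\}$. Let $T'\ge t_1$ be the first time a component is fully explored after $t_1$. Then (i) $\ea\in\FT$; (ii) using \refT{TC}\ref{TC<}, \whp{} $T'\le B_nt_1$ for some $B_n\to\infty$ with $B_nt_1=o(1)$ and $B_nt_1R_n=o(1)$, so only $o(n)$ of $\sum_i d_i^2$ is consumed by time $T'$; (iii) \whp{} all components explored by $T'$ are simple; and (iv) conditional on $\FT$, the unexplored part is again a configuration model with $\tnu_n=1+\op(1)$, so by \eqref{pes} its simplicity probability is $\P(\es)+\op(1)$. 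Combining these gives $\P(\ea\cap\es)=\P(\ea)\P(\es)+o(1)=\Phi(a)\P(\es)+o(1)$, and the same argument with $\ea$ replaced by the analogous event for $L_n(t_1)$ (also $\FT$-measurable) yields \eqref{asnL}.
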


We postpone the proof of the lemma.

\begin{proof}[Proof of \refT{TC}\ref{TC>} in the graph case]
Note that, given \refL{Ls}, we obtain
also \eqref{Ssmall} conditioned on $\es$
by the argument in the proof of \refL{LB1}. That is,
for any $b>0$, there exists $c(b)>0$ such that
	\begin{equation}
	\label{bra}
  	\P \bigpar{\tA_n(t_1)-\E\tA_n(t_1)>b\gs_n \mid \es }\ge c(b)+o(1).
	\end{equation}

Consider now \refL{LB5}.
It follows, similarly to the first part of the proof of \refL{LB5}, that
$\P(\ceb\mid\es)\ge c(b)+o(1)$.
Hence, $\P(\ceb\cap\es)
\ge c(b)\cs+o(1)$.
Since $\es\notin\cF_{t_1}$, we modify the next part of the proof of \refL{LB5}.
By \eqref{pec},
	\begin{equation}\label{brb}
  	\begin{split}
    	\P\bigpar{\cec\cap\es\mid\ceb}
	&\ge \P\bigpar{\cec\mid\ceb}
	+ \P\bigpar{\es\mid\ceb}-1
	\\&
	\ge1-\frac{\CCpec}{C^2}
	+ \P\bigpar{\es\cap\ceb}-1
	\\&
	\ge \cs c(b)-\frac{\CCpec}{C^2}+o(1)
	\ge \tfrac12\cs c(b)+o(1),	
 	 \end{split}
	\end{equation}
for a suitable choice of $C$.
The rest of the proof of \refL{LB5} works as before.
We  obtain, using \eqref{brb},
	\begin{equation}\label{brc}
 	\P \bigpar{\cebc \cap \es}=\P\bigpar{\ceb\cap\cec\cap\es}\ge\tfrac12\cs c(b)^2+o(1).
	\end{equation}
Hence we conclude, using \eqref{brx} as before,
that, for any $B > 1$,
	\begin{equation}
 	\P \bigpar{ \{\tilde{A}_n(t) > 0\colon t \in [t_1,Bt_1]\} \cap \es} \ge p(B) + o(1)
	\end{equation}
for some (new) $p(B)>0$, where $t_1$ is as in~\eqref{t1}.
Finally, the proof of \refT{TC}\ref{TC>} above yields, cf.~\eqref{chr},
$\P ( \{\vx{\cC_1}\ge Kn^{2/3}R_n\qqqw \}\cap \es) \ge p(B) + o(1)$,
and thus
$\P ( \vx{\cC_1}\ge Kn^{2/3}R_n\qqqw \mid \es) \ge p(B) + o(1)$,
which completes the proof of \refT{TC}\ref{TC>} for the simple random graph
$G_n$.
\end{proof}

It remains only to prove the lemma.
This could be done by the method used for similar
results in \cite{SJ282} and \cite{SJ300}, see also \cite{SJ196},
but we prefer an alternative, simpler, argument.

\begin{proof}[Proof of \refL{Ls}]
Consider the conditional analogue of \eqref{asnS}; the proof of conditional \eqref{asnL} is identical.

Let $a\in\bbR$ and let
$\ea:=\set{\bigpar{\tS_n(t_1)-\E\tS_n(t_1)}/\gs_n\le a}$;
thus, by \eqref{asnS},
	\begin{equation}
	\label{pea}
	\P(\ea)\to\Phi(a).
	\end{equation}
Let $T'$ denote the first time that a connected component is completely
explored after time $t_1$.
Let $B>1$. If $T'>Bt_1$, then the component $\cC$ explored until $T'$ has at
least
$V_n(t_1)-V_n(T'-)\ge V_n(t_1)-V_n(Bt_1)$ vertices,
and hence, using
Lemmas \ref{LB6} and \ref{LB7},
	\begin{equation}
  	v(\cC_1)\ge v(\cC)
	\ge V_n(t_1)-V_n(Bt_1)
	=n\mu_n(B-1)t_1+\op(nt_1).
	\end{equation}
It follows from
\refT{TC}\ref{TC<} that, for any $\gd>0$ and any fixed
$B$ such that $\mu(B-1)>K(\gd)$, we have $\P(T'> Bt_1) <\gd+o(1)$.
Consequently, if $B_n\rightarrow \infty$, then
$\P(T'> B_nt_1) <2\gd$ for any $\gd>0$ and all large $n$, and thus
$T'\le B_nt_1$ \whpx.
Note that   \eqref{t1} and \eqref{Rnqqo} imply that
	\eqn{
	\label{small-number}
	t_1 R_n= n\qqqw R_n^{2/3}=o(1),
	}
and that,
since $1=O(R_n)$, we also have
$t_1=o(1)$. We may  thus fix a sequence $B_n\to\infty$
such that $B_nt_1=o(1)$ and $B_nt_1R_n=o(1)$.

Let $T''$ be the first time that
the number of sleeping half-edges $S_n(t)$
drops
below $\ell_n/2$. (Recall that $S_n(0)=\ell_n=n\mu_n$.)
At time $B_nt_1$, the expected number of times that $\sC3$ has been performed is
at most $B_nt_1\ell_n=o(\ell_n)$, and corresponding to a few of these times also $\sC1$ was
performed; it follows easily that the expected number of sleeping half-edges
at $B_nt_1$ is $\ell_n-o(\ell_n)$, and thus \whp{} $T''>B_nt_1$.

Let $\BT$ denote the event that all the components explored by time $T'$ are
simple.

The probability that vertex $i$ is awakened no later than time $(B_nt_1)\land T''$ by using
\sC1 or \sC3 is $O(B_nt_1d_i)$, and, in the event that it is awakened,
the probability that two of its half-edges will form a loop is
$O(d_i^2/\ell_n)$ and the probability that it will be joined by a multiple
edge to a vertex $j$ awakened later is $O(d_i^2d_j^2/\ell_n^2)$.
Consequently,
	\begin{multline}
  	\label{comp-degree-rem}
	\prob\bigpar{\BT^c\cap \{T'\leq B_nt_1\}\cap\set{T''>B_nt_1}}\\
	\leq O(B_nt_1)\sum_{i\in[n]} {d_i}
	\Big[\frac{d_i^2}{\ell_n}+\frac{d_i^2}{\ell_n}\sum_{j\in [n]}
  	\frac{d_j^2}{\ell_n}\Big]=O(t_1 B_n R_n)
	=o(1),
	\end{multline}
and thus $\prob(\BT^c)=o(1)$, i.e., $\BT$ holds \whpx.

Then, we condition on the $\sigma$-algebra $\cF_{T'}$ of all randomness up
to time $T'$, and note that $\ea$ and $\BT$ are $\cF_{T'}-$measurable to obtain
	\eqn{\label{sat}
	\P(\ea\cap \es)
	=\P(\ea\cap \es\cap\BT)+o(1)
	=\E[\indicwo{\ea\cap \BT} \P(\es \mid \cF_{T'})]+o(1).
	}
The configuration model multigraph can be partitioned into the connected components found until time $T'$ and those that are found afterwards.
The multigraph consisting of all the connected components found after time $T'$ is
again (conditioned on $\cF_{T'}$)
a configuration model, now with a random number
$\tilde{n}=n(1-o(1))$ vertices and
degrees that are a (random) subset of  size $\tilde{n}$ from $[n]$. We denote this degree sequence by $(\tilde{d}_i)_{i\in[\tilde{n}]}$. In particular,
conditional on $\cF_{T'}$,
	\begin{equation}
  	\label{pese}
	\P(\es \mid \cF_{T'})=\indicwo{\BT}\prob(G(\tilde{n}, (\tilde{d}_i)_{i\in[\tilde{n}]})\text{ simple}).
	\end{equation}
By the discussion above~\eqref{comp-degree-rem},
the probability that
the event $\{T'\le B_nt_1\le T''\}$ occurs
and that
vertex $i$ is part of one of the connected components
found before time $T'$
is $O(B_nt_1 d_i)$.
Hence,
	\eqn{
	\E\Bigl[ \Bigpar{\sum_{i\in[n]}	d_i^2
	-\sum_{i\in[\tilde{n}]}\tilde{d}_i^2}
	\indic{ T'\le B_nt_1 < T''}
	\Bigr]
	\leq O(B_nt_1)\sum_{i\in[n]} {d_i^3}
	=O(nt_1B_nR_n)=o(n).
	}
Consequently, using Markov's inequality and recalling that $T'\le B_nt_1\le
T''$ \whp, we obtain
	\begin{equation}
	\sum_{i\in[\tilde{n}]}\tilde{d}_i^2
	=
  	\sum_{i\in[n]} d_i^2-
	\op(n)
	=\bigpar{1+\op(1)}  \sum_{i\in[n]} d_i^2.
	\end{equation}
Similarly,
or as a consequence,
$\sum_{i\in[\tilde{n}]}\tilde{d}_i
=\bigpar{1+\op(1)}  \sum_{i\in[n]} d_i$.

Thus, with $\tilde{\nu}_n$ denoting  $\nu_n$ in \eqref{nun-def} for the
(random) degree sequence $(\tilde{d}_i)_{i\in[\tilde{n}]}$,
and noting that $\nu_n = \sum_i d_i^2/\sum_i d_i -1$ and $\tilde{\nu}_n = \sum_i \tilde{d}_i^2/\sum_i \tilde{d}_i -1$
we obtain
	\begin{equation}
  	\tnu_n=\nu_n+\op(1)=1+\op(1).
	\end{equation}
Consequently,
	\eqref{pes} yields
	\begin{equation}
	\prob(G(\tilde{n}, (\tilde{d}_i)_{i\in[\tilde{n}]})\text{ simple})
	=\e^{-\tnu_n/2-\tnu_n^2/4}+\op(1)=\e^{-3/4}+\op(1)=\P(\es)+\op(1),
	\end{equation}
and, since $\BT$ holds \whp, \eqref{pese} yields
	\begin{equation}
  	\label{final}
	\P(\es \mid \cF_{T'})=\P(\es)+\op(1).
	\end{equation}

Finally, \eqref{sat} and \eqref{final} yield, together with \eqref{pea},
\begin{equation}
	\P(\ea\cap \es)
	=\E[\indicwo{\ea\cap \BT} \P(\es)]+o(1)
	=\P(\ea\cap \BT) \P(\es)+o(1)
	=\Phi(a) \P(\es)+o(1),
\end{equation}
and thus $	\P(\ea\mid\es)\to\Phi(a)$,
which completes the
proof of the lemma, and thus of the theorem.
\end{proof}
\medskip

\section{The complexity}\label{Scomplex}

Define the \emph{complexity} of a component $\cC$ by $k(\cC):=e(\cC)-v(\cC)+1$;
this is the number of independent cycles in $\cC$.
The estimates in \refT{T1} show only that $k(\cC_1)=\op(v(\cC_1))$.
(This is in contrast to the strongly supercritical case $\E D(D-2)>0$,
when $v(\cC_1)=c_vn\ettop$ and $e(\cC_1)=c_en\ettop$ for two
positive constants $c_v$ and $c_e$, see e.g.~\JL{Theorem 2.3}, and it is
easily verified that $c_e>c_v$ so $k(\cC_1)$ also is linear in $n$.)
We can use our methods to obtain a much sharper result.

\begin{theorem}
  \label{TK}
Suppose that \ref{AA}--\ref{AO} are
satisfied,  in particular  $\vep_n=o(1)$.
Suppose also that $\vep_n\gg n^{-1/3} (\expec\Dn^3)^{2/3}$.
Then
\begin{equation}\label{tk}
  k(\cC_1)
=n\chi_n\bigpar{1+\op(1)},
\end{equation}
where
\begin{align}
\chi_n
&:=
\frac12\mu_n\bigpar{1-(1-\rho_n)^2}-\E\bigpar{1-(1-\rho_n)^{D_n}}
  \label{chirho}
\\&\phantom:
=\frac12\mu_n\bigpar{1-\e^{-2\gan}}-\E\bigpar{1-\e^{-\ga_nD_n}}
  \label{chi1}
\\&\phantom:
=\E h(\ga_n D_n)-\tfrac12 \E D_n h(2\ga_n),	
\label{chih}
\end{align}
with
	\begin{equation}\label{h}
  	h(x):=\Bigpar{1+\frac{x}2}\e^{-x}-1+\frac{x}2
	=\frac12 \sum_{n\ge3} (-1)^{n-1}\frac{n-2}{ n!}x^n.
	\end{equation}
Moreover, $n\chi_n\to\infty$, $\chi_n=O(\ga_n^2\eps_n)=O(\eps_n^3)$ and
	\begin{equation}
	\label{chi2}
  	\chi_n\asymp \gan\gam_n\asymp\E\bigpar{(\gan D_n)\land (\gan D_n)^3}.
	\end{equation}
\end{theorem}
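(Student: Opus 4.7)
The identity $k(\cC_1)=e(\cC_1)-v(\cC_1)+1$ combined with the exploration-process accounting of \refS{SSgeneral} gives
$k(\cC_1)-1 = \sum_{v\in\cC_1}(d_v-2)/2 = \tfrac12\bigpar{S_n(T_1-)-S_n(T_2-)}-\bigpar{V_n(T_1-)-V_n(T_2-)},$
where $T_1,T_2$ are the \sC1-times bracketing $\cC_1$ in the proof of \refT{T1}, with $T_1/\ga_n\pto 0$ and $T_2/\ga_n\pto 1$. The plan is to compute this to leading order and then control the fluctuations.

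For the leading order, I would compare with the pure-death analogues $\tS_n$ and $\tV_n$. Using $\tV_{n,k}(t)\sim\Bin(n_k,\e^{-kt})$ together with the defining identity \eqref{frej}, a direct calculation yields
\begin{equation*}
\tfrac12\bigpar{\E\tS_n(0)-\E\tS_n(\ga_n)}-\bigpar{\E\tV_n(0)-\E\tV_n(\ga_n)} = \tfrac{n}{2}\mu_n\bigpar{1-(1-\rho_n)^2}-n\E\bigpar{1-(1-\rho_n)^{D_n}} = n\chi_n,
\end{equation*}
which is \eqref{chirho}. Formulas \eqref{chi1} and \eqref{chih} then follow by elementary rewriting, using $1-\rho_n=\e^{-\ga_n}$ and a further application of \eqref{frej} to express the $(1-\rho_n)^{D_n}$-term in terms of $h$. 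Positivity of $\chi_n$ uses the convexity of $h$ (immediate from $h''(x)=xe^{-x}/2\ge 0$) together with \ref{AD>2}. The estimate $\chi_n\asymp\ga_n\gam_n$ in \eqref{chi2} follows from $h(x)\asymp x\wedge x^3$, which gives $\E h(\ga_nD_n)\asymp\E\bigpar{(\ga_nD_n)\wedge(\ga_nD_n)^3}=\ga_n\gam_n$ by definition of $\gam_n$, while the second term satisfies $\tfrac12\mu_n h(2\ga_n)=O(\mu_n\ga_n^3)=o(\ga_n\gam_n)$ using $\gam_n\ge\mu_n\ga_n^2$ (see \refL{L123}). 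The bound $\chi_n=O(\ga_n^2\vep_n)$ follows by Taylor expansion of \eqref{chih} combined with \eqref{sard}, and $n\chi_n\to\infty$ is a consequence of $\vep_n\gg n^{-1/3}(\E D_n^3)^{2/3}$ via the estimates of \refS{SSrho}.

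The main technical hurdle will be upgrading this mean-value computation to a concentration statement. The standard bound \eqref{aT} controls $\tS_n-S_n$ and $\tV_n-V_n$ only up to $\op(n\gam_n)$, which is far too coarse since $n\chi_n\asymp n\ga_n\gam_n\ll n\gam_n$. The refinement exploits that $A_n(T_2-)=0$ by definition of $T_2$, so \eqref{jlss} simplifies to the sharp identity $\tS_n(T_2-)-S_n(T_2-)=-\tA_n(T_2-)=\Op((n\vep_n)^{1/2})$ via \refL{LXC1}. The matching quantity $\tV_n(T_2-)-V_n(T_2-)$, which counts \sC1-awakened vertices all of whose half-edges have been killed in \sC2 rather than dying spontaneously (since $A_n(T_2-)=0$ forces every half-edge of an awake vertex to be dead by $T_2^-$, yet $t_e(v)>T_2$ rules out spontaneous deaths), admits an analogous structural control exploiting the size-biased degree distribution of \sC1-selected vertices. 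Concentration of the pure-death analogue $\tilde K_n(t) := \tfrac12(\tS_n(0)-\tS_n(t))-(\tV_n(0)-\tV_n(t))$ around its mean $n\chi_n$ is then handled by a variance estimate sharpened by the vanishing of the weight $(k/2-1)$ at $k=2$, combined with the semimartingale inequality of \refL{LX} applied to control the supremum over $[T_1,T_2]$.
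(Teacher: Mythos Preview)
Your approach---computing $k(\cC_1)=e(\cC_1)-v(\cC_1)+1$ by tracking $\tfrac12 S_n - V_n$ through the exploration---is natural, but it runs into a genuine obstruction that the paper deliberately sidesteps. The difficulty is precisely the cancellation you mention: both $e(\cC_1)$ and $v(\cC_1)$ are of order $n\ga_n$, while their difference $n\chi_n\asymp n\ga_n\gam_n$ is smaller by a factor $\gam_n$. None of the error controls you invoke are sharp enough to survive this cancellation over the full range $\vep_n\gg n^{-1/3}(\E D_n^3)^{2/3}$. Concretely, take the regime $\E D_n^3=O(1)$, so $\ga_n\asymp\vep_n$, $\gam_n\asymp\vep_n^2$, and $n\chi_n\asymp n\vep_n^3$. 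First, your claim $\tA_n(T_2-)=\Op((n\vep_n)^{1/2})$ is not correct: one only knows $T_2/\ga_n=1+\op(1)$, and since $\E\tA_n(\ga_n t)=n\gam_n\psi_n(t)+O(1)$ with $\psi_n'(1)$ bounded away from zero, this gives $\tA_n(T_2-)=\op(n\gam_n)=\op(n\vep_n^2)$, which is already much larger than $n\vep_n^3$. Second, even the variance of your combined process $\tilde K_n(\ga_n)=\sum_k(k/2-1)(n_k-\tV_{n,k}(\ga_n))$ is $\sum_k(k/2-1)^2 n_k\e^{-k\ga_n}(1-\e^{-k\ga_n})\asymp n\ga_n\E D_n^3\asymp n\vep_n$, so its standard deviation $\asymp(n\vep_n)^{1/2}$ is $o(n\vep_n^3)$ only when $\vep_n\gg n^{-1/5}$, not in the full window $\vep_n\gg n^{-1/3}$. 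Your sketched refinements for $\tV_n-V_n$ would face the same scaling barrier.

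The paper avoids this cancellation entirely by never forming the difference $e-v$. Instead it observes that a cycle is created exactly when, in step \sC3, the spontaneously dying half-edge is \emph{active} rather than sleeping; this happens with instantaneous rate $A_n(t)$. Hence $N(t):=\#\{\text{cycles created by time }t\}$ satisfies that $M(t):=N(t)-\int_0^t A_n(u)\dd u$ is a martingale, and $k(\cC_1)=N(T_2)-N(T_1)$ \whp. The approximation $A_n(\ga_n t)\approx n\gam_n\psi_n(t)$ from \refS{SSgeneral} then gives $\int_0^{T_2}A_n(u)\dd u = n\ga_n\gam_n\int_0^1\psi_n(t)\dd t + \op(n\ga_n\gam_n)$ directly at the correct order, with no cancellation, and the martingale term is controlled by its quadratic variation $[M,M]_t=N(t)$. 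One then computes $\chi_n=\ga_n\gam_n\int_0^1\psi_n$ explicitly and recovers \eqref{chirho}--\eqref{chih}. The point is that $\int A_n$ already lives at the scale $n\ga_n\gam_n$, so the $\op(1)$ relative errors in the supercritical analysis suffice; your route would require absolute errors of that size in quantities of order $n\ga_n$.
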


\begin{remark}
The expression \eqref{chirho} is what would be intuitively expected from the
branching process approximation:
if we multiply by $n$, then the first term is the number of edges
($\ell_n/2=n\mu_n/2$) times the approximate probability that one of the
endpoints of an edge attaches to the largest component, and the second term
is the approximate probability that a random vertex attaches to the largest
component. Indeed, it follows from Theorem \ref{T1} that the two terms
approximate
$e(\cC_1)/n$ and $v(\cC_1)/n$ within a factor $1+\op(1)$.
However,  the  two terms in \eqref{chirho} differ only
by a factor $1+o(1)$, so there is a significant cancellation and
we need a different argument to show the result.
\end{remark}

\begin{remark}\label{Roo}
  By \eqref{h} and simple calculus,
$h(0)=h'(0)=0$ and $h''(x)=\frac12 x\e^{-x}$, so $h(x)$ is positive and
  convex on $(0,\infty)$. Moreover, $h(x)\sim\frac{1}{12}x^3$ as $x\to0$
and $h(x)\le \frac{1}{12}x^3$ for $x\ge0$.
Although the expressions in \eqref{chirho}--\eqref{chi1} are simpler, there
is (as said in \refR{Roo}) a lot of
cancellation, and \eqref{chih} better highlights the order of $\chi_n$.
\end{remark}

We postpone the proof of \refT{TK} and state first some consequences for the
most  important cases.

\begin{theorem}
\label{TK3}
Suppose that \ref{AA}--\ref{AO} are satisfied,
and that $D_n^3$ is uniformly integrable.
Suppose further that $\vep_n n^{1/3} \to \infty$.
Then
	\begin{equation}\label{tk3}
  	k(\cC_1)=\frac{\kk\mu}{12}n\rho_n^3 \bigpar{1+\op(1)}
	=\frac{2\mu}{3\kk^2}n\eps_n^3 \bigpar{1+\op(1)},
	\end{equation}
where $\kk\in(0,\infty)$ is given by \eqref{kk}.
\end{theorem}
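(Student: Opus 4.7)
The plan is to combine Theorem~\ref{TK} with the asymptotics of $\ga_n$ and $\rho_n$ from \refSS{SSrho}, reducing everything to a computation of $\chi_n$ under uniform integrability of $D_n^3$.

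First I would check that Theorem~\ref{TK} applies. Since $D_n^3$ is uniformly integrable, $\E D_n^3 = O(1)$, so $(\E D_n^3)^{2/3}=O(1)$ and the hypothesis $\vep_n n^{1/3}\to\infty$ gives $\vep_n\gg n^{-1/3}(\E D_n^3)^{2/3}$. Theorem~\ref{TK} then yields $k(\cC_1)=n\chi_n(1+\op(1))$, with $\chi_n$ given by the formula
\[
\chi_n=\E h(\ga_nD_n)-\tfrac12\E D_n\,h(2\ga_n),
\]
where $h(x)=(1+x/2)\e^{-x}-1+x/2$ satisfies $h(x)\sim x^3/12$ as $x\to 0$ and $0\le h(x)\le x^3/12$ for $x\ge 0$ (see Remark~\ref{Roo}).

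Next I would show $\chi_n\sim \frac{\mu\kk}{12}\ga_n^3$. For the first term, the pointwise bound $h(x)/x^3\le 1/12$ gives
\[
0\le\frac{h(\ga_n D_n)}{\ga_n^3}\le \frac{D_n^3}{12},
\]
so $\ga_n^{-3}h(\ga_nD_n)$ is dominated by the uniformly integrable family $D_n^3/12$. Using the Skorohod coupling (\refSS{SSSkorohod}) to assume $D_n\asto D$, together with $\ga_n\to 0$, gives $\ga_n^{-3}h(\ga_nD_n)\asto D^3/12$ on $\{D<\infty\}$, and hence by uniform integrability
\[
\ga_n^{-3}\E h(\ga_nD_n)\to \tfrac1{12}\E D^3.
\]
For the second term, since $h(2\ga_n)\sim (2\ga_n)^3/12=\tfrac23\ga_n^3$ and $\E D_n=\mu_n\to\mu$, we get $\tfrac12\ga_n^{-3}\E D_n\,h(2\ga_n)\to \mu/3$. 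Combining and invoking \eqref{kk4}, which says $\E D^3-4\mu=\mu\kk$ under uniform integrability of $D_n^2$ (implied by that of $D_n^3$), yields
\[
\frac{\chi_n}{\ga_n^3}\to\frac{\E D^3-4\mu}{12}=\frac{\mu\kk}{12}.
\]

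Finally I would translate this into the two claimed forms. By \eqref{garho2} we have $\ga_n\sim\rho_n$, so $\chi_n\sim\frac{\mu\kk}{12}\rho_n^3$, giving the first equality in \eqref{tk3}. The second equality then follows from \eqref{ganD3}, i.e.\ $\rho_n\sim 2\vep_n/\kk$, which substitutes to give $\frac{\mu\kk}{12}\rho_n^3\sim\frac{2\mu}{3\kk^2}\vep_n^3$. No step is truly delicate: the only mild technical point is the interchange of limit and expectation in the computation of $\ga_n^{-3}\E h(\ga_nD_n)$, which is handled cleanly by the domination $h(\ga_nD_n)\le(\ga_nD_n)^3/12$ and the hypothesis that $D_n^3$ is uniformly integrable.
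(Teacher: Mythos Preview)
Your proof is correct. It differs from the paper's argument only in how the limit $\chi_n/\ga_n^3\to\mu\kk/12$ is obtained: the paper uses the representation $\chi_n=\ga_n\gam_n\Psi_n$ from \eqref{chin}, together with $\gam_n\sim\ga_n^2\E D^3$ from \eqref{gamD3} and $\Psi_n=\int_0^1\psi_n(t)\dd t\to\frac{\kk\mu}{12\E D^3}$ from the limit \eqref{psiD3} in Remark~\ref{Rgamma}, whereas you work directly with the explicit formula \eqref{chih} and the pointwise bound $h(x)\le x^3/12$. Your route is slightly more self-contained, since \eqref{psiD3} is only asserted in a remark and requires its own uniform-integrability argument; on the other hand, the paper's route makes visible the factorization of $\chi_n$ into the scaling factor $\ga_n\gam_n$ and the shape integral $\Psi_n$, which is what drives the general estimate \eqref{chi2}. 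Both approaches finish identically via \eqref{garho2} and \eqref{ganD3}.
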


This extends the result for the \ER{} random graph $G(n,p)$.
There, in the barely supercritical case $k(\cC_1)\sim\frac{2}3n\eps_n^3$
(see, with more details, \cite{PittelWormald2005}
and, for $\eps\le n^{1/12}$, \cite{birth}),
which
corresponds to the case $D\sim\Po(1)$
(when $\mu=\kk=1$),
of \refT{TK3}
by conditioning on the vertex degrees as in \refS{sec-discussion}.
The order of the complexity in \eqref{tk3} interpolates nicely between the
known cases
of $\vep_n=\vep>0$ independently of $n$, where $k(\cC_1)$ is of order $n$, and the critical case $\vep_n=O(n^{-1/3})$,
where $k(\cC_1)$ converges in distribution \cite{DhaHofLeeSen16a}.

\begin{theorem}
\label{TK3infty}
Suppose that \ref{AA}--\ref{AO}
are satisfied,
and that $\E D^3=\infty$. (Thus $\E D_n^3\to\infty$.)
Suppose further that $\vep_n\gg n^{-1/3} (\E D_n^3)^{2/3}$.
Then
	\begin{align}
   	k\xpar{\cC_1}&=\op(n \eps_n^3 )  \label{tk3infty}
	.\end{align}
\end{theorem}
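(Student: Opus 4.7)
The plan is a direct deduction from \refT{TK} combined with the improved survival-probability bound available when $\E D^3=\infty$. The hypotheses of \refT{TK3infty} are identical to those of \refT{TK}, so the latter applies and yields
\begin{equation*}
k(\cC_1) = n\chi_n\bigpar{1+\op(1)},
\end{equation*}
together with the estimate $\chi_n = O(\ga_n^2\eps_n) = O(\eps_n^3)$. The first equality of $O(\eps_n^3)$ uses only $\rho_n=O(\eps_n)$ (from \eqref{rhoO}) and $\ga_n\sim\rho_n$ (from \eqref{garho2}). To improve this to $\chi_n = o(\eps_n^3)$, it therefore suffices to show $\ga_n = o(\eps_n)$.

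This is exactly where the assumption $\E D^3=\infty$ enters. By the formula \eqref{ftD},
\begin{equation*}
\E\tD^2 = \frac{\E[D(D-1)^2]}{\E D} = \infty,
\end{equation*}
since $\E D^3=\infty$. Hence \refT{Tsurv}\ref{tsurv2}, applied to a Galton--Watson process with offspring distribution $X_n = \tD_n$ (which satisfies $X_n\dto\tD$ and the lower-bound condition $\P(\tD\neq 1)>0$ following from \ref{AD>2}, as recorded in \refS{SSsize-biased}), gives $\rho_n = o(\eps_n)$. Combining with \eqref{garho2} yields $\ga_n\sim\rho_n = o(\eps_n)$, and substituting into $\chi_n = O(\ga_n^2\eps_n)$ gives $\chi_n = o(\eps_n^3)$. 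Plugging back into \refT{TK} completes the argument: $k(\cC_1) = n\chi_n(1+\op(1)) = \op(n\eps_n^3)$.

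There is no genuine obstacle; the work is entirely packaged into the already-established Theorems~\ref{TK} and~\ref{Tsurv}, and the proof exactly mirrors the way \refT{TD3infty} is deduced from \refT{T1}, with the role of the $\vx{\cC_1}\sim\mu_n\rho_n n$ asymptotic replaced by the complexity asymptotic $k(\cC_1)\sim n\chi_n$ and an extra factor of $\ga_n$ appearing because $\chi_n$ is cubic rather than linear in $\ga_n$.
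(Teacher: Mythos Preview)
Your proof is correct and follows essentially the same approach as the paper's own proof: invoke \refT{TK} to get $k(\cC_1)=n\chi_n(1+\op(1))$ with $\chi_n=O(\ga_n^2\eps_n)$, then use \refT{Tsurv}\ref{tsurv2} (via $\E\tD^2=\infty$) to obtain $\ga_n\sim\rho_n=o(\eps_n)$, whence $\chi_n=o(\eps_n^3)$. The paper compresses this to two lines by referring back to the proof of \refT{TD3infty} and to the inequality \eqref{chiaae}, but the logic is identical.
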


\begin{example}[Power-law degrees]
Consider again the power-law example in \refE{exam-power-law},
with $2<\gam<3$.
It follows from \eqref{chi2}, \eqref{rgamn} and \eqref{charlietau} that
$\chi_n\asymp\gan^\gam\asymp\eps_n^{\gam/(\gam-2)}$.
Again, this interpolates nicely between the known cases
of $\vep_n=\vep>0$ independently of $n$, where $k(\cC_1)$ is of order $n$,
and the critical case $\vep_n=O(n^{-(\gamma-2)/\gamma})$,
where $k(\cC_1)$ converges in distribution. The latter is shown in
\cite{DhaHofLeeSen16b} under stronger power-law assumptions on the degrees,
including that
$d_i n^{-1/\gamma}\to c_i$ with $\sum_{i\geq 1} c_i^3<\infty$, while
$\sum_{i\geq 1} c_i^2=\infty$,
such as for $c_i\asymp i^{-1/\gamma}$ with $\gamma\in(2,3)$.
(Recall Remark \ref{rem-crit}, where this is discussed in more detail.)
\end{example}

\begin{example}
Suppose that \ref{AA}--\ref{AO}
are satisfied,
$\E D^3=\infty$, and, furthermore,
 $\rho_n\gD_n=O(1)$.
Then \refL{Lredlund} applies and yields together with \eqref{chi2}
	\begin{equation}
 	 \chi_n\asymp\gan\gam_n\asymp \eps_n^3/(\E D_n^3)^{2},
	\end{equation}
showing that \eqref{tk3infty} in this case can be sharpened to
$k(\cC_1)\asymp n\eps_n^3/(\E D_n^3)^2$ \whpx.
\end{example}

\begin{lemma}
  \label{Lgagam}
Suppose that \ref{AA}--\ref{AO} are
satisfied and that $\vep_n\gg n^{-1/3} (\expec\Dn^3)^{2/3}$.
Then $n\gan\gam_n\to\infty$.
\end{lemma}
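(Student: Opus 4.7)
The plan is a short chain of already-established bounds.

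First I would produce a lower bound on $\ga_n$. By \eqref{garho2}, $\ga_n \sim \rho_n$, so it suffices to bound $\rho_n$ from below. Inequality \eqref{rho>} applied with offspring distribution $X_n = \tD_n$ gives
\[
\rho_n \;\ge\; \frac{2\eps_n}{\E\tD_n(\tD_n-1)} \;=\; \frac{2\eps_n}{\kk_n},
\]
with $\kk_n$ as in \eqref{kkn}. By \eqref{kknR}, $\kk_n \asymp \E D_n^3$, and therefore
\[
\ga_n \;\gtrsim\; \frac{\eps_n}{\E D_n^3}.
\]

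Second, I would invoke \refL{Lolof}, which states $\eps_n^2 = O(\gam_n\,\E D_n^3)$; rearranging,
\[
\gam_n \;\gtrsim\; \frac{\eps_n^2}{\E D_n^3}.
\]

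Multiplying these two bounds by $n$ yields
\[
n\ga_n\gam_n \;\gtrsim\; \frac{n\eps_n^3}{(\E D_n^3)^2},
\]
and the hypothesis $\eps_n \gg n^{-1/3}(\E D_n^3)^{2/3}$ is exactly the statement that $n\eps_n^3/(\E D_n^3)^2 \to \infty$. Hence $n\ga_n\gam_n \to \infty$, as required.

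There is no main obstacle here: the lemma is essentially a bookkeeping consequence of the survival probability lower bound \eqref{rho>}, the Cauchy--Schwarz bound of \refL{Lolof}, and the hypothesis rewritten as $n\eps_n^3 \gg (\E D_n^3)^2$. The only point worth being careful about is using the correct form of $\rho_n \gtrsim \eps_n/\E D_n^3$ (rather than the weaker $\rho_n = O(\eps_n)$ from \eqref{rhoO}), and verifying via \eqref{kknR} that $\kk_n$ and $\E D_n^3$ are of the same order so that the resulting lower bound on $\gamma_n$ matches.
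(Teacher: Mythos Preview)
Your proof is correct and, in fact, cleaner than the paper's own argument. The paper proceeds by a case split on whether $\ga_n\gD_n\le1$ or $\ga_n\gD_n>1$: in the first case it invokes \refL{Lredlund} to get the two-sided asymptotics $\ga_n\gam_n\asymp\eps_n^3/(\E D_n^3)^2$, and in the second case it appeals to condition \ref{Xdmax} (i.e., $\gD_n=o(n\gam_n)$), which was verified earlier in the proof of \refT{T1} via \refL{Lmarta}, to conclude that $1<\ga_n\gD_n=o(n\ga_n\gam_n)$.

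Your route avoids the case split entirely by observing that the one-sided bounds $\ga_n\gtrsim\eps_n/\E D_n^3$ (from \eqref{rho>} and \eqref{kknR}) and $\gam_n\gtrsim\eps_n^2/\E D_n^3$ (from \refL{Lolof}) hold uniformly, without any restriction on $\ga_n\gD_n$. Multiplying gives exactly what is needed. This is more self-contained: you do not need to reach back into the proof of \refT{T1} for \ref{Xdmax}, and you do not need the sharper two-sided \refL{Lredlund}. The paper's case split buys slightly more information in the first case (a full $\asymp$ rather than a lower bound), but that extra precision is not used here.
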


\begin{proof}
  We consider only $n$ such that $\eps_n>0$; this holds at least for all
  large $n$.

First, if $\ga_n\gD_n\le1$, then \refL{Lredlund} and the assumptions yield
\begin{equation}
  n\gan\gam_n\asymp n\frac{\eps_n^3}{(\E D_n^3)^2}\to\infty.
\end{equation}

On the other hand, if $\ga_n\gD_n>1$, then by \ref{Xdmax}, which was verified
in the proof of \refT{T1},
$1<\ga_n\gD_n=o(\gan n\gam_n)$, and thus $n\gan\gam_n\to\infty$ in this case
too.
\end{proof}

\begin{proof}[Proof of \refT{TK}]
Let $N(t)$ be the number of times up to
time $t$ that a new cycle is created.
Thus,
if $T$ is a time when $\sC1$ is performed, then $N(T)$ is the sum of the
complexities of the components explored up to $T$.

 During the exploration process, we create a new cycle each time \sC3 is
performed and the half-edge that dies is an active half-edge, i.e, each time
an  active half-edge dies spontaneously.
This happens with rate $A_n(t)$.
Consequently,
	\begin{equation}\label{MN}
	M(t):=N(t)-\int_0^t A_n(u)\dd u
	\end{equation}
is a martingale, with $M(0)=0$.

Let $T_1$ and $T_2$ be as in the proof of \refT{TX}, so \whp{} $\cC_1$ is
explored between $T_1$ and $T_2$. Thus \whp{} $k(\cC_1)=N(T_2)-N(T_1)$.
Recall that $T_1/\gan\pto0$ and $T_2/\gan\pto\tau=1$, and note that $T_2$ is
a stopping time.

Recall that \ref{Xfirst}--\ref{Xlast} were verified in the proof of
\refT{T1}.
By \ref{XA} and \refL{L0},
  	\begin{equation}\label{isaac}
	\sup_{t\le T_2}\lrabs{\frac1{n\gam_n}A_n(\gant)-\psi_n(t)}\pto0.	
  	\end{equation}
Consequently, using also that $\psi_n(t)$ is uniformly bounded on $[0,2]$ by
\refR{RB}, and that $T_2/\gan\pto1$ so that $T_2/\gan\le2$ \whp,
	\begin{equation}
  	\label{cam}
  	\begin{split}
	\int_0^{T_2}A_n(u)\dd u
	&
	=\gan\int_0^{T_2/\gan} A_n(\gan u)\dd u
	=n\gam_n\gan \int_0^{T_2/\gan} \psi_n(u)\dd u+\op\bigpar{n\gam_n\gan}	
	\\&
	=n\gan\gam_n \int_0^{1} \psi_n(u)\dd u+\op\bigpar{n\gan\gam_n}	.
 	 \end{split}
	\end{equation}
Let
	\begin{equation}\label{Psi}
  	\Psi_n:=\int_0^1\psi_n(t)\dd t,
	\end{equation}
and note that by \refR{RB} and \ref{Xpsiab}, $\Psi_n\asymp1$.
Define also the stopping time $T$ by
	\begin{equation}
  	\label{Tpsi}
	\int_0^T A_n(u)\dd u = n\gan\gam_n\bigpar{\Psi_n+1}.
	\end{equation}
By \eqref{cam}, $T_2\le T$ \whpx.

All jumps in the martingale $M(t)$ are $+1$, so the quadratic variation
(see \eg{} \cite[Theorem 26.6]{Kallenberg}) is
	\begin{equation}
  	[M,M]_t=\sum_{u\le t}\bigpar{\gD M(u)}^2=\sum_{u\le t}\gD M(u)
	=N(t).
	\end{equation}
Hence, for the stopped martingale $M(t\land T)$,
using \eqref{MN} and the definition \eqref{Tpsi}
of $T$, as well as
\cite[Corollary 3 to Theorem II.6.27, p.~73]{Protter},
	\begin{equation*}
  	\begin{split}
	\E\bigpar{M(T_2\land T)^2}
	&
	=\E[M,M]_{T_2\land T}
	=\E N(T_2\land T)
	=\E \int_0^{T_2\land T} A_n(u)\dd u
	+\E M(T_2\land T)
	\\&
	\le n\gan\gam_n\bigpar{\Psi_n+1}+0
	=O\bigpar{n\gan\gam_n}.
  	\end{split}
	\end{equation*}
Hence it follows that, using also \refL{Lgagam},
	\begin{equation}
  	\label{ini}
	M(T_2\land T)=\Op\bigpar{(n\gan\gam_n)\qq}
	=\op\bigpar{n\gan\gam_n}.
	\end{equation}

By \eqref{MN}, \eqref{cam}, \eqref{ini} and $T_2\land T=T_2$ \whp,
	\begin{equation}\label{NT2}
 	N(T_2)=\int_0^{T_2}A_n(u)\dd u+M(T_2)
	=n\gan\gam_n\Psi_n+\op\bigpar{n\gan\gam_n}.
	\end{equation}
Furthermore, for any fixed $\gd>0$, $T_1<\gd\gan$ \whp{} and thus
$N(T_1\land T)\le N(T\land(\gd\gan))$.
Hence, again since $M$ is a martingale,
	\begin{equation}
	\label{cam1}
  	\E N\bigpar{T_1\land T} \le \E N\bigpar{T\land(\gd\gan)}
	=\E\int_0^{T\land(\gd\gan)}A_n(u)\dd u.
	\end{equation}
Furthermore, by \eqref{isaac} and \refR{RB},
	\begin{equation}\label{cam2}
  	\begin{split}
	  \int_0^{T\land(\gd\gan)} A_n(u)\dd u
	&
	\le \int_0^{\gd\gan} A_n(u)\dd u
	=\gan\int_0^{\gd} A_n(\gan t)\dd t
	\\&
	=n\gan\gam_n\Bigpar{\int_0^{\gd}\psi_n(t)\dd t+\op(1)}
	\le n\gan\gam_n\bigpar{\gd+\op(1)}.
 	 \end{split}
	\end{equation}
It follows from \eqref{cam1} and \eqref{cam2}, by dominated convergence
justified by \eqref{Tpsi}, that
	\begin{equation}
  	\bigpar{n\gan\gam_n}\qw\E N(T_1\land T) \le \gd+o(1).
	\end{equation}
Since $\gd\in(0,1)$ is arbitrary, it follows that
$\E N(T_1\land T)=o\xpar{n\gan\gam_n}$, and thus \whp{}
$N(T_1)=N(T_1\land T)=\op(n\gan\gam_n)$.
Consequently, recalling \eqref{NT2}, \whp
	\begin{equation}
  	k(\cC_1)=N(T_2)-N(T_1)
	=n\gan\gam_n\bigpar{\Psi_n+\op(1)}
	=n\gan\gam_n\Psi_n\bigpar{1+\op(1)}
	,
	\end{equation}
which shows \eqref{tk} with
	\begin{equation}\label{chin}
	\chi_n=\gan\gam_n\Psi_n.
	\end{equation}

Recalling $\Psi_n\asymp1$,
we have $\chi_n\asymp\gan\gam_n$ and thus $n\chi_n\to\infty$ by \refL{Lgagam}.
Furthermore, \eqref{chi2} follows from \eqref{svagam}.
It follows from
\eqref{chi2} and \eqref{sard} that
	\begin{equation}
	\label{chiaae}
  	\chi_n\asymp \E\bigpar{(\gan D_n)\land (\gan D_n)^3}
	\le \E\bigpar{(\gan D_n)^2\land (\gan D_n)^3}
	\asymp \gan^2\eps_n,
	\end{equation}
i.e., $\chi_n=O(\gan^2\eps_n)$;
furthermore $\ga_n\sim\rho_n=O(\eps_n)$ by \eqref{rhoO}.

It remains to evaluate $\chi_n$ in \eqref{chin} and show that it agrees with
\eqref{chirho}--\eqref{chih}.
By \eqref{Psi},  \eqref{svapsi} and Fubini's theorem,
\begin{equation}
  \begin{split}
\chi_n=
 \gan\gam_n\Psi_n
&
=\gan\int_0^1\bigpar{\mu_n\e^{-2\gan t}-\E\bigpar{D_n \e^{-\gan tD_n}}}\dd t
\\&
=\frac12\mu_n\bigpar{1-\e^{-2\gan}}-\E\bigpar{1-\e^{-\ga_nD_n}},
  \end{split}
\end{equation}
which shows \eqref{chi1}.
By the definition \eqref{gan} of $\gan$, this is the same as \eqref{chirho}.
Furthermore, the equality of \eqref{chih} and \eqref{chi1} follows by a
simple calculation using \eqref{frej}.
\end{proof}

\begin{proof}[Proof of \refT{TK3}]
Under the assumptions in  \refT{TK3},
$\gam_n\sim \gan^2\E D^3$ by \eqref{gamD3} and
  	\begin{equation}
	\Psi_n=\int_0^1\psi_n(t)\dd t\to\frac{\kk\mu}{12\E D^3}	
  	\end{equation}
as a consequence
  of \eqref{psiD3}. Hence \eqref{tk3} follows from \eqref{tk}, \eqref{chin}
  and \eqref{ganD3}.
\end{proof}

\begin{proof}[Proof of \refT{TK3infty}]
 As in the proof of \refT{TD3infty}, \eqref{e:tsurv2} yields
$\ga_n=o(\eps_n)$. Hence, \eqref{chiaae} implies $\chi_n=o(\eps_n^3)$,
and \eqref{tk3infty} follows.
\end{proof}

\section*{Acknowledgements}
This work was commenced while the authors were visiting the Mittag-Leffler Institute in 2009 for the programme `Discrete Probability'.
Part of the work
was done during the authors' visit to 
the International Centre for Mathematical Sciences in Edinburgh
to attend a workshop `Networks: stochastic models for populations and epidemics' in 2011.
The paper was completed during the authors' visit
to the
Isaac Newton Institute for Mathematical Sciences
for the programme
``Theoretical Foundations for Statistical Network Analysis'',
supported by EPSCR grant EP/K032208/1.
RvdH is supported by the Netherlands
Organisation for Scientific Research (NWO) through VICI grant 639.033.806 and the Gravitation {\sc Networks} grant 024.002.003.
SJ is supported by a grant from
the Knut and Alice Wallenberg Foundation
and a grant from
the Simons foundation. ML is supported by an EPSRC Leadership Fellowship, grant reference EP/J004022/2.

  \newcommand\cprime{$'$}


\begin{thebibliography}{10}
\renewcommand\newblock{\relax}

\bibitem[Aldous(1997)]{Aldo97}
David~Aldous,
\newblock Brownian excursions, critical random graphs and the multiplicative
  coalescent.
\newblock {\em Ann. Probab.}, {\bf 25} (1997), no. 2, 812--854.

\bibitem{AngHofHol16}
Omer~Angel, Remco~van~der Hofstad, and Cecilia~Holmgren,
\newblock Limit laws for self-loops and multiple edges in the configuration
  model.
Preprint, 2016.
\arxiv{1603.07172}

\bibitem[Athreya(1992)]{Athreya92}
Krishna B. Athreya,
Rates of decay for the survival probability of a mutant gene.
\emph{J. Math. Biol.} \textbf{30} (1992), 577--581.


\bibitem{BhaHofHoo13b}
Shankar Bhamidi, Remco van~der Hofstad, and Gerard Hooghiemstra,
\newblock Universality for first passage percolation on sparse uniform and
  rank-1 random graphs.
\newblock Preprint, 2014. To appear in {\em Ann. Prob.}

\bibitem{BhaHofLee09a}
Shankar Bhamidi, Remco van~der Hofstad, and Johan S.H.~van Leeuwaarden,
\newblock Scaling limits for critical inhomogeneous random graphs with finite
  third moments.
\newblock {\em Electronic Journal of Probability}, {\bf 15} (2010), 1682--1702.

\bibitem{BhaHofLee09b}
Shankar Bhamidi, Remco van~der Hofstad, and Johan S.H.~van Leeuwaarden,
\newblock Novel scaling limits for critical inhomogeneous random graphs.
\newblock {\em Ann. Probab.}, {\bf 40} (2012), 2299--2361.


\bibitem[Bollob{\'a}s(1984)]{Boll84}
B{\'e}la Bollob{\'a}s,
The evolution of random graphs.
\emph{Trans. Amer. Math. Soc.} \textbf{286} (1984), no. 1, 257--274.

\bibitem[Bollob\'as(2001)]{Bollobas}
B{\'e}la Bollob{\'a}s,
\emph{Random Graphs}.
2nd ed., Cambridge Univ. Press,
Cambridge, 2001.

\bibitem{BolJanRio07}
B{\'e}la~Bollob{\'a}s, Svante~Janson, and Oliver~Riordan,
\newblock The phase transition in inhomogeneous random graphs.
\newblock {\em Random Structures Algorithms}, {\bf 31}  (2007) no. 1, 3--122.

\bibitem{BolRio15}
B{\'e}la~Bollob{\'a}s and Oliver~Riordan,
\newblock An old approach to the giant component.
\newblock \emph{J. Combin. Theor. Ser. B}, {\bf 113} (2015), 236--260.

\bibitem{BriDeiMar-Lof05}
Tom Britton, Mia~Deijfen, and Anders~Martin-L{\"o}f,
\newblock Generating simple random graphs with prescribed degree distribution.
\newblock {\em J. Stat. Phys.}, {\bf 124} (2006), no. 6, 1377--1397.

\bibitem{SJ199}
Tom Britton, Svante Janson, and Anders Martin-L{\"o}f,
\newblock Graphs with specified degree distributions, simple epidemics, and
  local vaccination strategies.
\newblock {\em Adv. in Appl. Probab.}, \textbf{39} (2007), no. 4, 922--948.

\bibitem{ChuLu02a}
Fan Chung and Linyuan Lu,
\newblock The average distances in random graphs with given expected degrees.
\newblock {\em Proc. Natl. Acad. Sci. USA}, {\bf 99} (2002), no. 25,
15879--15882.

\bibitem{ChuLu02b}
Fan Chung and Linyuan Lu,
\newblock Connected components in random graphs with given expected degree
  sequences.
\newblock {\em Ann. Comb.}, {\bf 6} (2002), no. 2, 125--145.

\bibitem{ChuLu03}
Fan Chung and Linyuan Lu,
\newblock The average distance in a random graph with given expected degrees.
\newblock {\em Internet Math.}, {\bf 1} (2003), no. 1, 91--113.

\bibitem{ChuLu06}
Fan Chung and Linyuan Lu,
\newblock The volume of the giant component of a random graph with given
  expected degrees.
\newblock {\em SIAM J. Discrete Math.}, {\bf 20} (2006), 395--411.


\bibitem[Dhara et al.(2016)]{DhaHofLeeSen16a}
Souvik Dhara, Remco van der Hofstad, Johan S.H. van Leeuwaarden and
Sanchayan Sen,
\newblock Critical window for the configuration model: finite third moment
  degrees.
\newblock Preprint, 2016. \arxiv{1605.02868}

\bibitem{DhaHofLeeSen16b}
Souvik~Dhara, Remco~van~der Hofstad, Johan S.H. van Leeuwaarden, and Sanchayan~Sen,
\newblock Critical window for the configuration model: infinite third moment
  degrees.
\newblock In preparation.


\bibitem[Eshel(1981)]{Eshel81}
Ilan Eshel,
On the survival probability of a slightly advantageous mutant gene with a
general distribution of progeny size -- a branching process model.
\emph{J. Math. Biol.} \textbf{12} (1981), no. 3, 355--362.

\bibitem[Ewens(1969)]{Ewens69}
Warren J. Ewens,
\emph{Population Genetics}.
Methuen \& Co., Ltd., London, 1969.

\bibitem{Gut}
Allan Gut,
\emph{Probability: A Graduate Course},
2nd ed., Springer, New York, 2013.

\bibitem[Hatami and Molloy(2012)]{HatamiMolloy}
 Hamed Hatami and Michael Molloy,
The scaling window for a random graph with a given degree sequence.
\emph{Random Structures Algorithms} \textbf{41} (2012), no. 1, 99--123.

\bibitem{Hofs16}
Remco van~der Hofstad,
\newblock {\it Random Graphs and Complex Networks. {V}ol.\ {I}.}
\newblock In preparation, see {\tt
  http://www.win.tue.nl/$\sim$rhofstad/NotesRGCN.pdf}. To appear with
Cambridge University Press, 2016.


\bibitem[Hoppe(1992)]{Hoppe92}
Fred M. Hoppe,
Asymptotic rates of growth of the extinction probability of a mutant
gene.
\emph{J. Math. Biol.} \textbf{30} (1992), no. 6, 547--566.

\bibitem{Jans94}
Svante Janson,
\newblock Orthogonal decompositions and functional limit theorems for random
  graph statistics.
\newblock {\em Mem. Amer. Math. Soc.}, \textbf{111} (1994), no. 534. 

\bibitem{Jans06b}
Svante Janson,
\newblock The probability that a random multigraph is simple.
\newblock {\em Combin. Probab. Comput.}, \textbf{18} (2009), no. 1-2, 205--225.

\bibitem{Jans08a}
Svante~Janson,
\newblock Asymptotic equivalence and contiguity of some random graphs.
\newblock {\em Random Structures Algorithms} {\bf 36} (2010)  no. 1, 26--45.


\bibitem{SJN6}
Svante Janson,
Probability asymptotics: notes on notation.
\arXiv{1108.3924}


\bibitem{simpleII}
Svante Janson,
\newblock The probability that a random multigraph is simple, {II}.
\newblock {\em J. Appl. Probab.}, \textbf{51A} (2014), 123--137.

\bibitem{birth}
Svante Janson, Donald E. Knuth,  Tomasz {\L}uczak and  Boris Pittel,
The birth of the giant component.
\emph{Random Structures Algorithms} \textbf4 (1993), no. 3, 231--358.

\bibitem[Janson and Luczak(2008)]{SJ196}
Svante Janson and Malwina~J. Luczak,
Asymptotic normality of the $k$-core in random graphs.
\emph{Ann. Appl. Probab.} \textbf{18} (2008), no. 3, 1085--1137.

\bibitem[Janson and Luczak(2009)]{JanLuc07}
Svante Janson and Malwina~J. Luczak,
\newblock A new approach to the giant component problem.
\newblock {\em Random Structures Algorithms}, \textbf{34} (2009), no. 2, 197--216.


\bibitem[Janson, {\L}uczak and Ruci\'nski(2000)]{JLR}
Svante Janson, Tomasz {\L}uczak and Andrzej Ruci\'nski,
\emph{Random Graphs}.
Wiley, New York, 2000.


\bibitem[Janson, Luczak and  Windridge(2014)]{SJ282}
Svante Janson, Malwina Luczak and Peter Windridge,
Law of large numbers for the SIR epidemic on a random graph with given degrees.
\emph{Random Structures Algorithms} \textbf{45} (2014), no. 4, 724--761.


\bibitem[Janson, Luczak and  Windridge(2015+)]{SJ300}
Svante Janson, Malwina Luczak, Peter Windridge and Thomas House,
Near-critical SIR epidemic on a random graph with given degrees.
\emph{J. Math. Biology} (2016), first online.

\bibitem{ReedEtal2016}
Felix Joos, Guillem Perarnau, Dieter Rautenbach, and Bruce Reed,
How to determine if a random graph with a fixed degree sequence has a giant
component.
\arxiv{1601.03714}

\bibitem[Joseph(2014)]{Jose10}
Adrien~Joseph,
\newblock The component sizes of a critical random graph with given degree
  sequence.
\newblock {\em Ann. Appl. Probab.}, {\bf 24} (2014), no. 6, 2560--2594.



\bibitem[Kallenberg(2002)]{Kallenberg}
Olav Kallenberg,
\emph{Foundations of Modern  Probability}.
2nd ed.,
Springer, New York, 2002.

\bibitem[Kang and Seierstad(2008)]{KangSeierstad}
Mihyun Kang and Taral G.  Seierstad,
The critical phase for random graphs with a given degree sequence.
\emph{Combin. Probab. Comput.} \textbf{17} (2008), no. 1, 67--86.

\bibitem[{\L}uczak(1990)]{Luczak90}
Tomasz {\L}uczak,
Component behavior near the critical point of the random graph process.
\emph{Random Structures Algorithms} \textbf1 (1990), no. 3, 287--310.

\bibitem[{\L}uczak, Pittel and Wierman(1994)]{LPW1994}
Tomasz {\L}uczak, Boris Pittel and  John C. Wierman,
The structure of a random graph at the point of the phase transition.
\emph{Trans. Amer. Math. Soc.} \textbf{341} (1994), no. 2, 721--748.

\bibitem[Molloy and Reed(1995)]{MolRee95}
Michael Molloy and Bruce Reed,
A critical point for random graphs with a given degree sequence.
\emph{Random Structures Algorithms} \textbf{6} (1995), no. 2-3, 161--179.

\bibitem[Molloy and Reed(1998)]{MolRee98}
Michael Molloy and Bruce Reed,
The size of the giant component of a random graph with a given degree
sequence.
\emph{Combin. Probab. Comput.}  \textbf{7} (1998),  295--305.


\bibitem{NorRei06}
Ilkka~Norros and Hannu~Reittu,
\newblock On a conditionally {P}oissonian graph process.
\newblock {\em Adv. in Appl. Probab.}, {\bf 38} (2006), no. 1, 59--75.


\bibitem{PittelWormald2005}
 Boris Pittel and Nicholas C. Wormald,
Counting connected graphs inside-out.
\emph{J. Combin. Theory Ser. B} \textbf{93} (2005), no. 2, 127--172.

\bibitem{Protter}
Philip E. Protter,
 \emph{Stochastic Integration and Differential Equations},
2nd ed.
\newblock Springer-Verlag, Berlin,  2004.


\bibitem[Riordan(2012)]{Rior12}
Oliver~Riordan,
\newblock The phase transition in the configuration model.
\newblock {\em Combinatorics, Probability and Computing},
{\bf 21} (2012), 1--35.

\end{thebibliography}
\end{document}